\theoremstyle{plain}
\newtheorem{thm}{Theorem}[section]
\newtheorem{theorem}[thm]{Theorem}
\newtheorem{lem}[thm]{Lemma}
\newtheorem{prop}[thm]{Proposition}
\newtheorem{proposition}[thm]{Proposition}
\newtheorem{lemma}[thm]{Lemma}
\newtheorem{corollary}[thm]{Corollary}
\newtheorem*{mainthm}{Main Theorem}
\theoremstyle{definition}
\newtheorem{definition}[thm]{Definition}
\newtheorem{example}[thm]{Example}
\newtheorem{remark}[thm]{Remark}
\newtheorem{notation}[thm]{Notation}
\newcommand{\set}[1]{\{#1\}}
\newcommand{\Set}{\mathbf{Set}}
\newcommand{\Hom}{\operatorname{Hom}}
\newcommand{\obj}{\operatorname{Ob}}
\newcommand{\sCat}{\ensuremath{\mathbf{sCat}}}
\newcommand{\Operad}{\ensuremath{\mathbf{Operad}}}
\newcommand{\sOperad}{\ensuremath{\mathbf{sOperad}}}
\newcommand{\sMega}{\ensuremath{\mathbf{sMega}}}
\newcommand{\A}{\mathcal{A}}
\newcommand{\B}{\mathcal{B}}
\renewcommand{\C}{\mathcal{C}}
\newcommand{\D}{\mathcal{D}}
\newcommand{\T}{\mathcal{T}}
\newcommand{\R}{\mathcal{R}}
\newcommand{\someoperad}{\mathcal{O}}
\newcommand{\M}{\mathbb{M}}
\newcommand{\mst}{\mathcal{T}}
\DeclareMathOperator*{\colim}{\operatorname{colim}}
\newcommand{\Ho}{\operatorname{Ho}} 
\newcommand{\smallprod}[1]{(#1)}
\newcommand{\ua}{\underline a}
\newcommand{\ub}{\underline b}
\newcommand{\ba}{(\underline a; \underline b)}
\newcommand{\uamalg}[1]{\underset{#1}\amalg}
\newcommand{\fC}{\mathfrak{C}}
\newcommand{\uc}{\underline{c}}
\newcommand{\ud}{\underline{d}}
\newcommand{\sset}{\mathsf{sSet}}
\newcommand{\cat}{\mathsf{Cat}}
\newcommand{\dc}{\dch}
\newcommand{\xxsingle}{(x;x)}
\newcommand{\xysingle}{(y;x)}
\newcommand{\yxsingle}{(x;y)}
\newcommand{\yysingle}{(y;y)}
\newcommand{\profilev}{(\inp(v);\out(v))}
\newcommand{\dch}{(\uc;\ud)}
\DeclareMathOperator{\vertex}{Vt}
\DeclareMathOperator{\edge}{Edge}
\DeclareMathOperator{\image}{\operatorname{Im}}
\DeclareMathOperator{\inp}{in}
\DeclareMathOperator{\out}{out}
\DeclareMathOperator{\Ob}{Ob}
\DeclareMathOperator{\sk}{\mathsf{sk}}
\newcommand{\marked}{\mathsf{G}^{M}}
\newcommand{\incident}{\mathbb{Y}}
\newcommand{\id}{\operatorname{id}}
\newcommand{\sSet}{\mathbf{sSet}}
\newcommand{\Prop}{\ensuremath{\mathbf{Prop}}}
\newcommand{\sProp}{\ensuremath{\mathbf{sProp}}}
\newcommand{\Cat}{\ensuremath{\mathbf{Cat}}}
\newcommand{\Mega}{\ensuremath{\mathbf{Mega}}}
\newcommand{\X}{\mathcal{X}}
\newcommand{\Y}{\mathcal{Y}}
\newcommand{\col}{\operatorname{Col}}
\newcommand{\msh}{\mathcal{H}}
\newcommand{\msp}{\mathcal{P}}
\newcommand{\msq}{\mathcal{Q}}
\newcommand{\msi}{\mathcal{I}}
\newcommand{\notg}{{\not g}}
\newcommand{\blood}{\mathsf{B}}
\newcommand{\partition}{\boxplus}
\newcommand{\bloodadmissible}{\vphantom{\blood}^{a}\blood}
\newcommand{\bloodsimple}{\vphantom{\blood}^{s}\blood}
\newcommand{\properadG}{\mathcal{G}}
\newcommand{\properadW}{\mathcal{W}}
\newcommand{\properadB}{\mathcal{B}}
\newcommand{\sgc}{\mathsf{G}_{\fC}}
\newcommand{\op}{{\rm op}}
\newcommand{\transfusions}{\mathsf{T}}
\newcommand{\decAB}{D}
\newcommand{\calC}{\mathcal C}
\newcommand{\calD}{\mathcal D}
\newcommand{\Ononempty}{\mathbf{+}}
\newcommand{\precrush}{\widetilde{\msq}}
\newcommand{\sG}{\mathsf{G}}
\newcommand{\colorFunction}{\zeta}
\newcommand{\fD}{\mathfrak{D}}
\newcommand{\fdc}{(f\uc; f\ud)}
\newcommand{\profileg}{(\inp(G);\out(G))}
\title{The homotopy theory of simplicial props}
\author[P. Hackney]{Philip Hackney}
\address{Matematiska institutionen\\ Stockholms universitet\\ 106 91 Stockholm \\ Sweden}
\email{hackney@math.su.se} 
\author[M. Robertson]{Marcy Robertson}
\address{Department of Mathematics \\ The University of California Los Angeles \\ Los Angeles, CA}\email{mrober97@math.ucla.edu}
\subjclass[2010]{Primary 55U35; Secondary 18D10, 18D50, 18G55, 19D23, 55U10}
\keywords{Prop, colored prop, cofibrantly generated model category, model category}
\begin{document}

\begin{abstract}The category of (colored) props is an enhancement of the category of colored operads, and thus of the category of small
categories. In this paper, the second in a series on `higher props,' we show that the category of all small colored simplicial props admits a cofibrantly generated model category structure. With this model structure, the forgetful functor from props to operads is a right Quillen functor.\end{abstract}

\maketitle

% \tableofcontents

%!TEX root = TryingToMakeSense.tex

\newsavebox\greytangle
\begin{lrbox}{\greytangle}
\begin{tikzpicture} 
	\draw[thick] plot [smooth] coordinates {(0,0) (0,0.5) (0.5,1) (1, 1.5) (1,2)};
	\draw[draw=white,double=black,very thick] plot [smooth] coordinates { (0,2) (0,1.5) (0.5,1) (1,0.5) (1,0)};
	\draw[thick] plot coordinates {(1.5,0) (1.5,2)};
	\shadedraw [shading=axis] (0,1.5) circle (0.75ex);
	\shadedraw [shading=axis] (0,0.5) circle (0.75ex);
	\shadedraw [shading=axis] (1.5,1) circle (0.75ex);
	\draw[dashed] (-0.25,2) -- (1.75,2);
	\draw[dashed] (-0.25,0) -- (1.75,0);
\end{tikzpicture}
\end{lrbox}

\newsavebox\blacktangle
\begin{lrbox}{\blacktangle}
\begin{tikzpicture} 
	\draw[thick] plot [smooth] coordinates {(0,0) (0,0.5) (0.5,1) (1, 1.5) (1,2)};
	\draw[draw=white,double=black,very thick] plot [smooth] coordinates { (0,2) (0,1.5) (0.5,1) (1,0.5) (1,0)};
	\draw[thick] plot coordinates {(1.5,0) (1.5,2)};
	\draw [draw,fill] (0,1.5) circle (0.75ex);
	\draw [draw,fill] (0,0.5) circle (0.75ex);
	\draw [draw,fill] (1.5,1) circle (0.75ex);
	\draw[dashed] (-0.25,2) -- (1.75,2);
	\draw[dashed] (-0.25,0) -- (1.75,0);
\end{tikzpicture}
\end{lrbox}

\newsavebox\corollablackvariant
\begin{lrbox}{\corollablackvariant}
\begin{tikzpicture} 
	\draw[thick] (0,0) -- (0.75,1) -- (0,2);
	\draw[thick] (1.5,2) -- (0.75,1) -- (1,2);
	\draw[thick] (1.5,0) -- (0.75,1) -- (1,0);
	\draw [draw,fill] (0.75,1) circle (0.75ex);
	\draw[dashed] (-0.25,2) -- (1.75,2);
	\draw[dashed] (-0.25,0) -- (1.75,0);
\end{tikzpicture}
\end{lrbox}

\newsavebox\corollathreesblack
\begin{lrbox}{\corollathreesblack}
\begin{tikzpicture} 
	\draw[thick] (0,0) -- (0.5,1) -- (0,2);
	\draw[thick] (0.5,2) -- (0.5,1) -- (1,2);
	\draw[thick] (0.5,0) -- (0.5,1) -- (1,0);
	\draw [draw,fill] (0.5,1) circle (0.75ex);
	\draw[dashed] (-0.25,2) -- (1.25,2);
	\draw[dashed] (-0.25,0) -- (1.25,0);
\end{tikzpicture}
\end{lrbox}

\newsavebox\corollathreesgrey
\begin{lrbox}{\corollathreesgrey}
\begin{tikzpicture} 
	\draw[thick] (0,0) -- (0.5,1) -- (0,2);
	\draw[thick] (0.5,2) -- (0.5,1) -- (1,2);
	\draw[thick] (0.5,0) -- (0.5,1) -- (1,0);
	\shadedraw [shading=axis] (0.5,1) circle (0.75ex);
	\draw[dashed] (-0.25,2) -- (1.25,2);
	\draw[dashed] (-0.25,0) -- (1.25,0);
\end{tikzpicture}
\end{lrbox}

\newsavebox\threeblowups
\begin{lrbox}{\threeblowups}
\begin{tikzpicture} 
	% Line 1
	\draw[thick] plot [smooth] coordinates {
			(0.25,4) 
			(0.25,0)
	};
	\draw [draw,fill=white] 
			(0.25,3) circle (0.75ex);
	% Line 2
	\draw[thick] plot [smooth] coordinates {
			(0.75,4) 
			(0.75,3.75) 
			(0.75,3) 
			(0.75,2.25) 
			(1.25,1.75) 
			(1.25, 1.25) 
			(1.75, 0.75) 
			(1.75,0.25) 
			(1.75,0)
	};
	\draw [draw,fill=white] 
			(0.75,3) circle (0.75ex);
	\draw [draw,fill=white] 
			(1.25, 1.25) circle (0.75ex);
	\draw[thick] plot [smooth] coordinates {
			(1.75,4) 
			(1.75,1.75) 
			(1.75,1.25) 
			(2.25,0.75) 
			(2.25,0.25) 
			(2.25,0)
	};
	\draw [draw,fill=white] 
			(2.25,0.75) circle (0.75ex);
	% Line 5
	\draw[draw=white,double=black,very thick] plot [smooth] coordinates { 
			(3.25,4) 
			(3.25,3.75) 
			(3.25,3.25) 
			(2.75,2.75) 
			(2.75,2.25) 
			(2.25,1.75) 
			(2.25,1.25) 
			(1.25,0.75) 
			(1.25,0.25) 
			(1.25, 0)
	};
	\draw [draw,fill=white] 
			(2.25,1.25) circle (0.75ex);
	\draw [draw,fill=white] 
			(2.75,2.75) circle (0.75ex);

	% Line 4
	\draw[draw=white,double=black,very thick] plot [smooth] coordinates { 
			(2.75,4) 
			(2.75,3.75) 
			(2.75,3.25) 
			(3.25,2.75) 
			(3.25,2.25) 
			(3.25,0)
	};
	\draw [draw,fill=white] 
			(3.25,2.75) circle (0.75ex);

	\draw[dashed,color=green] (0,2.5) rectangle (1,3.5);
	\draw[dashed,color=green] (2.5,2.5) rectangle (3.5,3.5);
	\draw[dashed,color=green] (1,0.25) rectangle (2.5,1.75);
	\draw[dashed] (0,4) -- (3.5,4);
	\draw[dashed] (0,0) -- (3.5,0);
\end{tikzpicture}
\end{lrbox}

\newsavebox\toblowup
\begin{lrbox}{\toblowup}
\begin{tikzpicture} 
	\draw[thick] plot [smooth] coordinates {
			(0.25,4)
			(0.25,3.75)
			(0.5,3)
	};\draw[thick] plot [smooth] coordinates {
			(0.5,3)
			(0.25,2.25)
			(0.25,0)
	};\draw[thick] plot [smooth] coordinates {
			(0.75,4) 
			(0.75,3.75) 
			(0.5,3) 
	};\draw[thick] plot [smooth] coordinates {
			(0.5,3)
			(1.75,1) 
	};\draw[thick] plot [smooth] coordinates {
			(1.75, 1) 
			(1.75,0.25) 
			(1.75,0)
	};\draw[thick] plot [smooth] coordinates {
			(1.75,4) 
			(1.75,0)
	};\draw[thick] plot [smooth] coordinates {
			(1.75,1) 
			(2.25,0.25) 
			(2.25,0)
	};\draw[thick] plot [smooth] coordinates {
			(2.75,4)
			(2.75,3.75)
			(3,3)
	};\draw[thick] plot [smooth] coordinates {
			(3.25,4)
			(3.25,3.75)
			(3,3)
	};\draw[thick] plot [smooth] coordinates { 
			(3,3)
			(1.75,1)
	};\draw[thick] plot [smooth] coordinates {
			(1.75,1) 
			(1.25,0.25) 
			(1.25,0)
	};\draw[thick] plot [smooth] coordinates { 
			(3,3)
			(3.25,2.25) 
			(3.25,0)
	};

	\draw [draw,fill=white] 
			(0.5,3) circle (0.75ex);
	\draw [draw,fill=white] 
			(3,3) circle (0.75ex);
	\draw [draw,fill=white] 
			(1.75, 1) circle (0.75ex);
	\draw[dashed] (0,4) -- (3.5,4);
	\draw[dashed] (0,0) -- (3.5,0);
\end{tikzpicture}
\end{lrbox}

\newsavebox\erasedboundingbox
\begin{lrbox}{\erasedboundingbox}
\begin{tikzpicture} 
	\draw[thick] plot [smooth] coordinates {
			(0.25,3.5) 
			(0.25,1.5)
	};
	\draw [draw,fill=white] 
			(0.25,2) circle (0.75ex);
	% Line 2
	\draw[thick] plot [smooth] coordinates {
			(0.75,3.5)
			(0.75,3)
			(0.75,2) 
			(1.75,1.5)
	};
	\draw [draw,fill=white] 
			(0.75,3) circle (0.75ex);
	\draw [draw,fill=white] 
			(0.75, 2) circle (0.75ex);
	\draw[thick] plot [smooth] coordinates {
			(1.75,3.5) 
			(1.75,2)
			(2.25,1.5)
	};
	\draw [draw,fill=white] 
			(1.75,2) circle (0.75ex);
	% Line 5
	\draw[draw=white,double=black,very thick] plot [smooth] coordinates { 
			(3.25,3.5)
			(2.75,3)
			(2.75,2)
			(1.25, 1.5)
	};
	\draw [draw,fill=white] 
			(2.75,3) circle (0.75ex);
	\draw [draw,fill=white] 
			(2.75,2) circle (0.75ex);

	% Line 4
	\draw[draw=white,double=black,very thick] plot [smooth] coordinates { 
			(2.75,3.5)
			(3.25,3)
			(3.25,1.5)
	};
	\draw [draw,fill=white] 
			(3.25,3) circle (0.75ex);

	\draw[dashed] (0,3.5) -- (3.5,3.5);
	\draw[dashed] (0,1.5) -- (3.5,1.5);
\end{tikzpicture}
\end{lrbox}

\newsavebox\collapsedvertices
\begin{lrbox}{\collapsedvertices}
\begin{tikzpicture} 
	\draw[thick] plot [smooth] coordinates {
			(0.25,3.5) 
			(0.25,1.5)
	};
	\draw [draw,fill=white] 
			(0.25,2) circle (0.75ex);
	% Line 2
	\draw[thick] plot [smooth] coordinates {
			(0.75,3.5)
			(0.75,3)
			(0.75,2.5)
			(0.75,2) 
			(1.75,1.5)
	};
	\draw [draw,fill=white] 
			(0.75,2.5) circle (0.75ex);
	\draw[thick] plot [smooth] coordinates {
			(1.75,3.5) 
			(1.75,2)
			(2.25,1.5)
	};
	\draw [draw,fill=white] 
			(1.75,2) circle (0.75ex);
	% Line 5
	\draw[draw=white,double=black,very thick] plot [smooth] coordinates { 
			(3.25,3.5)
			(2.75,3)
			(2.75,2.5)
			(2.75,2)
			(1.25, 1.5)
	};
	\draw [draw,fill=white] 
			(2.75,2.5) circle (0.75ex);

	% Line 4
	\draw[draw=white,double=black,very thick] plot [smooth] coordinates { 
			(2.75,3.5)
			(3.25,3)
			(3.25,1.5)
	};
	\draw [draw,fill=white] 
			(3.25,3) circle (0.75ex);

	\draw[dashed] (0,3.5) -- (3.5,3.5);
	\draw[dashed] (0,1.5) -- (3.5,1.5);
\end{tikzpicture}
\end{lrbox}

\newsavebox\corollafives
\begin{lrbox}{\corollafives}
\begin{tikzpicture} 
	\draw[thick] plot coordinates {
			(0.25,3.5) 
			(1.75,2.5)
			(0.25,1.5)
	};\draw[thick] plot coordinates {
			(0.75,3.5)
			(1.75,2.5)
			(1.75,1.5)
	};\draw[thick] plot coordinates {
			(1.75,3.5) 
			(1.75,2.5)
			(2.25,1.5)
	};\draw[thick] plot coordinates { 
			(3.25,3.5)
			(1.75,2.5)
			(1.25, 1.5)
	};\draw[thick] plot coordinates { 
			(2.75,3.5)
			(1.75,2.5)
			(3.25,1.5)
	};
	\draw [draw,fill=white] 
			(1.75,2.5) circle (0.75ex);
	\draw[dashed] (0,3.5) -- (3.5,3.5);
	\draw[dashed] (0,1.5) -- (3.5,1.5);
\end{tikzpicture}
\end{lrbox}

%!TEX root = source.tex

\section{Introduction} 

The aim of this paper is to provide a simplicial model for ``higher props.'' 
A prop, first introduced by Adams and MacLane \cite{catalg}, is a device which is capable of modeling algebraic, coalgebraic, and bialgebraic structures.
Examples of structures which are governed by props include the usual types of (co)algebras (associative, commutative, Lie, Poisson, etc.) and bialgebras (Hopf and Frobenius algebras), but also topological examples such as $n$-fold loop spaces \cite{bv} and topological or conformal field theories \cite{segal88}.
The colored, or multi-sorted, version of prop will be considered here, which allows one to model (among other things) morphisms and families of such structures.
A (colored) prop, like a category, has a set of objects $\{x, y, \dots\}$, but we replace arrows $x\rightarrow y$ by multilinear operations $x_{1}\otimes \dots \otimes x_{n}\rightarrow y_{1}\otimes \dots \otimes y_{m}$.
Operads \cite{geometry,mss}, categories \cite{maclane}, colored operads \cite{bv,bmresolution}, and properads \cite{vallette} are all essentially examples of such props.
See the survey paper \cite{operadsandprops} for more examples and motivation.

The adjective ``higher'' refers to higher category theory, in the $(\infty, 1)$-categorical sense \cite{juliesurvey}.
Roughly, an $(\infty, 1)$-category is to a category as an $A_\infty$-space is to a monoid.
There are many ways to formalize this concept, including simplicial categories \cite{bergner}, complete Segal spaces \cite{rezk}, and quasi-categories \cite{joyal1, htt}.
These are all related by chains of Quillen equivalences of model categories (see the survey article \cite{juliesurvey} for an overview).
Similar definitions of $\infty$-operads were developed by Moerdijk, Cisniski and Weiss in \cite{mw,mw2, cm-ho,cm-ds,cm-simpop}.

This paper is part of a multistage project developing the notion of higher props by extending one of these known models for $\infty$-categories to props. More explicitly, we develop the simplicial model for higher props by showing that the category of simplicial props admits a cofibrantly generated model category structure. The main technical effort of this paper is contained in Proposition~\ref{The Hard Part} where we analyze pushouts of simplicial props with varying sets of colors.  While pushouts of operads can be computed in the category of symmetric sequences, this is not the case for pushouts of props.  This complication requires us to provide significantly different proofs from those in the operad setting \cite{cm-simpop, robertson1}.
Further, pushouts of simplicial props cannot be computed using the later technology of Batanin and Berger \cite{bb}.

The model structure developed in this paper is used in the follow-up paper \cite{hry-2} to give a model structure on the category of simplicial \emph{properads}.
This is important in a parallel project with Donald Yau, initiated in \cite{hry-book}, to understand the homotopy theory of infinity properads.
Later papers in this series will develop combinatorial models for up-to-homotopy props, following the dendroidal approach to higher operads \cite{cm-ds, cm-simpop, cm-ho, mw, mw2} as well as Francis and Lurie's approach to $\infty$-operads in \cite{francis, higheralgebra}.

\subsection{Organization}
Sections \ref{section graphs and megagraphs} -- \ref{section:model structures} are background material, section \ref{S:modelprop} contains the main theorem and its proof, modulo the proof of Lemma \ref{NAFA} which comprises the last four sections.

The paper begins in the next section with the basics of (colored) graphs, megagraphs (parametrized simplicial sets with $\Sigma$-biactions) and graph substitution.
In section \ref{section props} we give a definition of prop, and provide two ways to produce props in \ref{sec_free_prop}, \ref{operadtoprop}.
Section \ref{section:model structures} recalls two specific previously known model structures: the Bergner model structure on simplicially-enriched categories with varying object sets, and the Johnson-Yau model structure on simplicially-enriched props with a fixed set of colors.

In section \ref{S:modelprop} we blend these two model structures together to give a model structure on the category of simplicially-enriched props with varying color sets.
We describe sets of generating (acyclic) cofibrations (definitions \ref{generatingcofibrations}, \ref{generatingacycliccofibrations}) and in \ref{subsection classification fibrations} compute the orthogonal complements of their saturations (the acyclic fibrations and fibrations, respectively).

At this point we go about showing that pushouts of generating acyclic cofibrations are weak equivalences.
Those which come from the generating acyclic cofibrations in $\sSet$ are relatively straightforward, while the remaining ones constitute a major difficulty. In particular, the proof of Lemma \ref{NAFA} takes up the bulk of this paper. 
It begins in section \ref{S:model for pushouts}, where we set up a general categorical framework for modeling pushouts of simplicial props. 
In section \ref{section specialization}, we restrict attention to the types of pushout from Lemma \ref{NAFA}, and are able to make several reductions in our categorical framework.
We use this in section \ref{section local filtration} to give a filtration of this pushout, while in section \ref{section filtration layers} we exhibit each filtration inclusion as a pushout of an acyclic cofibration of $\sSet$. 
This section concludes with a proof of Lemma \ref{NAFA}.

With Lemma \ref{NAFA} proved, we show in the remainder of \ref{subsection relative cell complexes} that relative $J$-cell complexes are weak equivalences, and then use this fact as input in the recognition theorem for cofibrantly generated model categories to establish the desired model structure.
We conclude by noting, in \ref{subsection right proper}, that the model structure on simplicial props is right proper.

\subsection{Acknowledgments}
The first version of this paper lacked an adequate explanation for Lemma~\ref{NAFA}; the authors would like to thank an anonymous referee for pointing this out and would also like to thank D. Yau and M. Batanin for discussions related to this problem.  
This paper has had several referees, and we are thankful to all of them for their insightful and useful comments.
We also thank F. Wierstra for a useful conversation which helped finish section \ref{section filtration layers}.

%!TEX root = source.tex

\section{Graphs and megagraphs}\label{section graphs and megagraphs}
In this paper, we employ the notions of graph and megagraph from \cite{hackneyrobertson1}. We will use descriptive terminology for graphs here; precise formulations may be found in \cite{bb,kock,yj}.
For our purposes, a \emph{graph} is a collection of vertices and directed edges; the tail of each edge is either an \emph{input} of the graph or is connected to a vertex, and similarly the head of each edge is either an \emph{output} of the graph or connected to a vertex. In addition, our graphs have no directed cycles. 
Special examples are the graph $|$ with no vertices and one edge, the graph $\bullet$ with one vertex and no edges, and the empty graph.

\subsection{Megagraphs}
Consider the free monoid monad $\M: \Set \to \Set$ which takes a set $\fC$ to $\M(\fC) = \coprod_{k \geq 0} \fC^{\times k}$. There are right and left actions of the symmetric groups on the components of $\M \fC$. More compactly we could say that there are both right and left actions of the symmetric \emph{groupoid} $\Sigma =\coprod_{n\geq 0} \Sigma_n$ on $\M \fC$. A $\Sigma$-bimodule is a set with compatible left and right $\Sigma$-actions. 
We now describe an extension of the notion of graph, namely one in which edges are permitted to have multiple inputs and outputs. %

\begin{definition}

\begin{enumerate} 

\item A \emph{megagraph} $\X$ consists of a set of `objects' $X_0$,
a set of `arrows' $X_1$, two functions $s: X_1 \to \M X_0$ and $t: X_1 \to
\M X_0$, and  right and left $\Sigma$ actions on $X_1$.
These actions should have an interchange property $\tau \cdot (x\cdot
\sigma) = (\tau \cdot x) \cdot \sigma$ and should be compatible with
those on $\M X_0$, so $t( \tau \cdot x) = \tau \cdot t(x)$ and  $s(  x
\cdot \sigma) = s(x) \cdot \sigma$, where $\sigma, \tau \in \Sigma$.
\item 
A map of megagraphs $f: \X \to \Y$ is determined by maps $f_0: X_0 \to
Y_0$ and $f_1: X_1 \to Y_1$ so $sf_1 = (\M f_0)s$ and $tf_1 = (\M f_0)t$.
The collection of megagraphs determines a category which we
call $\Mega$.

\item 
A \emph{simplicial megagraph} will be a structure with a \emph{discrete} simplicial set of objects $X_0$, a simplicial set of arrows $X_1$, along with a compatible structure of a megagraph on each $(X_1)_n$.

\item 
For a map of simplicial megagraphs we require that the function $f_1$ be a map of simplicial sets; we denote by $\sMega$ the category of simplicial megagraphs.
\end{enumerate} 
 \end{definition}

\begin{remark}
	We will often denote the set of objects $X_0$ of a megagraph $\X$ by $X_0 = \fC$. In this case, we will also call $\X$ a megagraph over $\fC$.
\end{remark}

\begin{definition}
	In anticipation of the role played in the definition of prop (\ref{ccoloredgprop}, we will call elements of 
	$\M(\fC) \times \M(\fC)$ \emph{input-output profiles of $\fC$}. We will use underlines to give the shorthand notation
	\[
		\dch = (c_1, \dots, c_m; d_1, \dots, d_n) \in \fC^{\times m} \times \fC^{\times n} \subset \M(\fC) \times \M(\fC)
	\]
	whenever we write down an input-output profile.
\end{definition}

\begin{remark}
	If $\X$ is a (simplicial) megagraph and $\dch$ is an input-output profile of $X_0$, we denote by $\X\dch$ the preimage of $\dch$ under $(s,t)$; in other words, the diagram 
	\[ \begin{tikzcd} 
		\X\dch \rar \dar[hook] & \{ \dch \} \dar[hook]
	\\
		X_1 \rar{(s,t)} & \M(X_0) \times \M(X_0)
	\end{tikzcd} \]
	is a pullback.
	The induced map
	\[
		\coprod_{\dch \in \M(X_0) \times \M(X_0)} \X\dch \to X_1
	\]
	is a bijection, hence an isomorphism. 
\end{remark}

This inspires the following definition of megagraph, which is valid in an arbitrary category $\mathcal E$.

\begin{definition}\label{E megagraph}
	Let $\fC$ be a set. An $\mathcal E$-megagraph $\X$ over $\fC$ consists of the data
	\begin{itemize}
		\item an object $\X\dch$ of $\mathcal E$ for each input-output profile $\dch$ of $\fC$,
		\item $\mathcal E$-maps $\tau_* : \X\dch \to \X(\tau \cdot \uc; \ud)$ for each $\tau \in \Sigma_{|\uc|}$, and
		\item $\mathcal E$-maps $\sigma^* : \X \dch \to \X(\uc; \ud \cdot \sigma)$ for each $\sigma \in \Sigma_{|\ud|}$.
	\end{itemize}
	These data should satisfy $\tau_* \sigma^* = \sigma^* \tau_*$, $\sigma^* (\sigma')^* = (\sigma' \sigma)^*$, and $\tau_* \tau'_* = (\tau \tau')_*$.
	If $\X$ is an $\mathcal E$-megagraph over $\fC$ and  $\Y$ is an $\mathcal E$-megagraph over $\fD$, then a morphism $\X \to \Y$ consists of a set map $f: \fC \to \fD$ and $\mathcal E$-morphisms
	\[
		\X\dch \to \Y\fdc
	\]
	which are compatible with the $\Sigma$ actions.
\end{definition}

Thus we may think of a megagraph as a family of objects parametrized by $\M (\fC) \times \M (\fC)$, together with left and right $\Sigma$-actions. 
A megagraph $\X$ over $\fC$ would be called a $\fC$-colored $\Sigma$-bimodule in \cite{fmy}.

\subsection{Graph Substitution}\label{graph sub}

Graph substitution is a powerful notational tool which we employ throught this paper.

\begin{definition}
Let $\fC$ be a set. A \emph{$\fC$-colored graph} is a graph together with the following additional structure.
\begin{itemize}
	\item An edge coloring function $\colorFunction: \edge(G)\to \fC$,
	\item for each vertex $v\in \vertex(G)$ a fixed ordering on the input edges $\inp(v)$ and the output edges $\out(v)$, and
	\item an ordering on both $\inp(G)$ and $\out(G)$.
\end{itemize}
Let $\sgc\dc$ be the set of $\fC$ colored graphs so that $\colorFunction(\inp(G)) = \underline{c}$ and $\colorFunction(\out(G)) = \underline{d}$ (as ordered sets).
\end{definition} 

This is a subset of the data comprising a `decoration of $G$' in \cite[Def. 41]{hackneyrobertson1}. 

\begin{definition}\label{definition of fG}
	If $G$ is a $\fC$-colored graph and $f: \fC \to \fD$ is a map of sets, then denote by $fG$ the $\fD$-colored graph with all of the same structure as $G$, except that the edge coloring function is changed to the composite
	\[
		\edge(G) \overset\colorFunction\to \fC \overset{f}\to \fD.
	\]
\end{definition}

For each $c\in \fC$, write $|_c$ for the graph with one $c$-colored edge and no vertices.
Given an input-output profile $\dch = c_1, \dots, c_n; d_1, \dots, d_m$, the \emph{standard corolla} $C_{\dch}$ is the graph with edges $e_1,\dots,e_{n+m}$, a single vertex $v$ adjacent to all edges, and, as ordered lists,
\begin{equation}\label{standard_corolla}
\begin{aligned}
	\inp(G) = \inp(v) &= e_1, \dots, e_n & \colorFunction(\inp(G)) &= c_1,\dots, c_n \\
	\out(G) = \out(v) &= e_{n+1}, \dots, e_{n+m} & \colorFunction(\out(G)) &= d_1, \dots, d_m.
\end{aligned}
\end{equation}

Suppose now that we have a graph $G \in \sgc\dc$, a vertex $v\in G$, and a graph $K \in \sgc ( \colorFunction(\inp(v)) ; \colorFunction(\out(v)) )$. 

\begin{definition} 
The \emph{graph substitution} $G(K)$ is the graph with
\begin{align*}
	\edge(G(K)) &= (\edge(G) \amalg \edge(K)) / (\out(v),\inp(v) \sim \out(K),\inp(K)) \\
	\vertex(G(K)) &= (\vertex(G) \setminus \{v\}) \amalg \vertex(K)
\end{align*}
together with all the data of the evident orderings, coloring functions, source, target, etc. 
\end{definition} 

\begin{remark}  We can, in fact, do multiple graph substitutions at once, which we will write as $G\{K_v\}$ where $v$ ranges over some subset of vertices of $G$. An example is given in figure~\ref{graph sub ex}, where the orderings should all be taken as those coming from the plane.
\end{remark} 

\begin{figure}
	\includegraphics[scale=0.4]{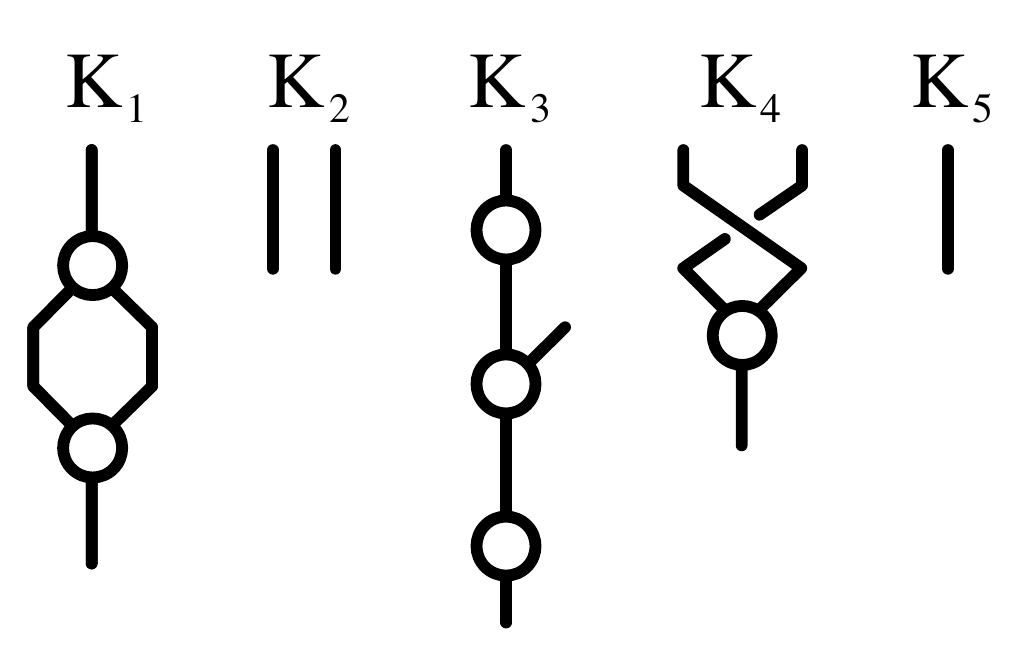}
	\includegraphics[scale=0.4]{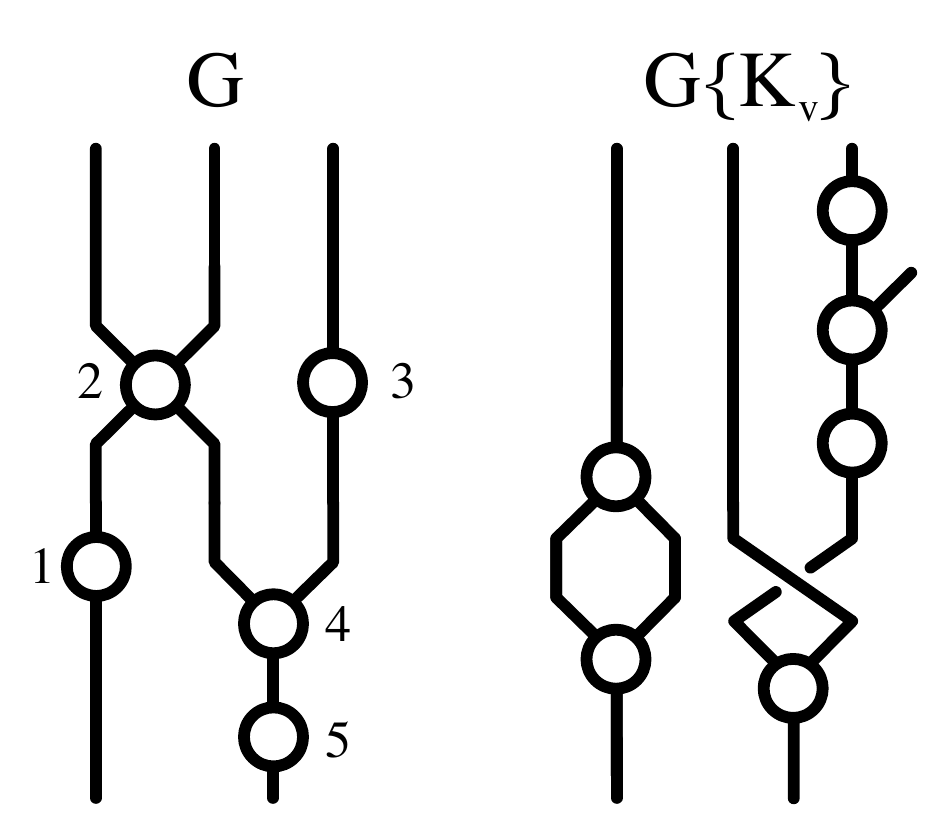}
	\caption{A sample graph substitution}\label{graph sub ex}
\end{figure}

\begin{notation}
We avoid explicitly writing $\colorFunction$ unless absolutely necessary. In particular, we will write $\sgc \profilev$ instead of $\sgc ( \colorFunction(\inp(v)) ; \colorFunction(\out(v)) )$.
\end{notation}

Graph substitution is associative, so $\sgc\dc$ is in fact a category, with an arrow 
\[
	G\{K_v\} \to G
\]
for every graph substitution.

\begin{example}\label{ex:non_triv_auto}
	Let $\fC = *$, let $G \in \sgc(*;*)$ be the graph with two vertices $v,w$ and four edges depicted in figure~\ref{fig:non_triv_auto}. 
	Let $K_w$ and $K_v$ be the relevant non-standard corollas. Then
	up to renaming of edges, $G\{ K_v, K_w \} = G$.
	This is a non-trivial graph substitution, hence is an example of a non-trivial automorphism in $\sgc(*;*)$.
	
	\begin{figure}
	\includegraphics[scale=0.4]{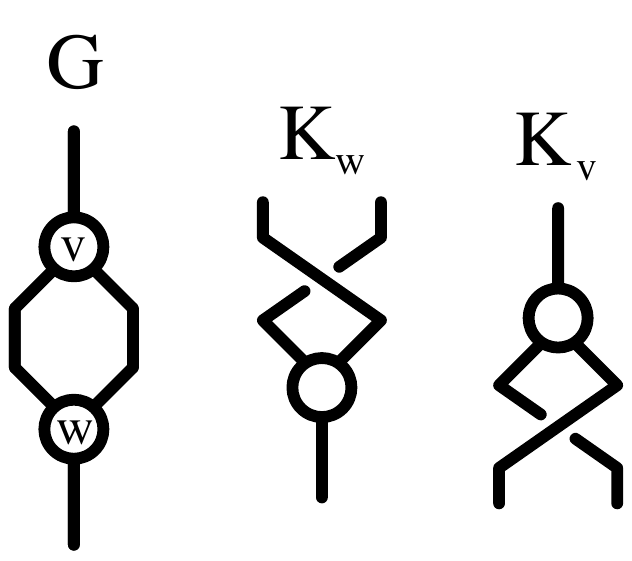}
	\caption{Graphs from example \ref{ex:non_triv_auto}}\label{fig:non_triv_auto}
\end{figure}
\end{example}

\begin{proposition}\label{terminality of corolla}
	The standard corolla $C_{\dch}$ is a terminal object in $\sgc\dc$.
\end{proposition}
\begin{proof}
	First notice that if $K \in \sgc\dc$, then $K=C_{\dch}(K)$, where $K$ is substituted into the unique vertex of the corolla, whence there is map $K\to C_{\dch}$.
	We now just need to check that there are no non-identity automorphisms of $C_{\dch}$:
	all maps to $C_{\dch}$ are of the form
	\[
		C_{\dch}(K_v) \overset{f}{\to} C_{\dch},
	\]
	where $v$ is the unique vertex of $v$ and $K_v \in \sgc\dc$ is some graph. 
	But $C_{\dch}(K_v) = K_v$, hence if $f$ is an automorphism then 
	$f$ is the identity on $C_{\dch}(C_{\dch}) = C_{\dch}$.
\end{proof}

\section{Props}\label{section props}
In this section we define props by using graph substitution, give a description of the free prop on a megagraph, and discuss how every colored operad defines a prop. A good reference for this material is \cite{yj}.

Fix a symmetric monoidal category $(\mathcal{E},\otimes,I)$ with all small limits and colimits, such that $\otimes$ distributes over colimits in both variables, and let $\fC$ be a set of colors.

\begin{notation}
	Suppose $\X$ is a megagraph in $(\mathcal{E},\otimes,I)$ and $G$ is an $X_0$-colored graph.
	Let
	\[
		\X[G] = \bigotimes_{v\in \vertex(G)} X\profilev
	\]
	be the space of decorations of the vertices of $G$ by elements of $\X$.
	Here we are using the \emph{unordered tensor product}; namely, if $S$ is a finite set with $k$ elements and $\{ Z_s \}_{s\in S}$ is an $S$-indexed family of objects of $\mathcal E$, then the unordered tensor product is defined to be
	\[
		\bigotimes_{s\in S} Z_s := \left( \coprod_{\alpha : \{1, \dots, k \} \overset\cong\to S}  Z_{\alpha(1)} \otimes \dots \otimes Z_{\alpha(k)} \right)_{\Sigma_k}
	\]
	where $\alpha$ ranges over all bijections between $\{ 1, \dots, k\}$ and $S$.
	Note that for any fixed bijection $\alpha: \{1, \dots, k \} \to S$, the map $\bigotimes_{i=1}^k Z_{\alpha(i)} \to \bigotimes_{s\in S} Z_s$ is an isomorphism.
\end{notation}

\begin{definition}
\label{ccoloredgprop}
A \emph{$\fC$-colored prop in $\mathcal{E}$} consists of 
\begin{enumerate}
\item
an $\mathcal E$ megagraph $\T$ over $\fC$ (definition \ref{E megagraph}), and
\item
for each graph $G \in \sgc$, a map $\gamma_G$
\[
\begin{tikzcd}
\T[G] = \bigotimes_{v\in G} \T\profilev \dar{\gamma_G} \\ \T\profileg \in \mathcal{E}
\end{tikzcd}\]
such that the following two conditions hold.
\begin{description}
\item[Unity]
$\gamma_C = \id$ whenever $C$ is a standard corolla \eqref{standard_corolla}.
\item[Associativity]
$\gamma$ is associative with respect to graph substitution, in the sense that the diagram
\begin{equation}
\label{gpropass}
\begin{tikzcd}
\bigotimes_{v \in G} \T[H_v] \rar{\otimes \gamma_{H_v}} 
& \T[G] \dar{\gamma_G}\\
\T[G\{H_v\}] \uar{\cong} \rar{\gamma_{G(H_v)}} & \T\profileg
\end{tikzcd}
\end{equation}
is commutative whenever it is defined.
\end{description}
\end{enumerate}
\end{definition}

\begin{notation} When we discuss a prop $\T$ we will refer to $\fC$ as the set of colors of $\T$. When we are dealing with several props it is convenient to denote $\fC$ as $\col(\T)$ so it does not become confusing which set of colors belongs to which prop. 
\end{notation} 

\begin{definition} A morphism of props $f:\R\rightarrow\T$ 
is a morphism of megagraphs so that
\[ \begin{tikzcd}
\R[G] \rar \dar & \T[fG] \dar \\
\R\dch \rar & \T \fdc
\end{tikzcd} \]
commutes for every $\col(\R)$-graph $G$ (where $fG$ is defined as in defintion \ref{definition of fG}).
% consists of a set map $\col(f):\col(\R)\rightarrow\col(\T)$ and for each input-output profile $\ba$ in $\col(\R)$, a morphism $f:\R(\ua;\ub)\longrightarrow\T(f\ua;f\ub)$ which commutes with all composition, identity, and symmetry operations. 
The category of props and prop morphisms is denoted $\Prop$. 
\end{definition}

% It is straightforward to extend this definition to that of a \emph{prop enriched in a symmetric monoidal category} $(\mathcal{E}, \boxtimes, I)$. In the data, one replaces the sets of operations $\T(\ua; \ub)$ with objects of a closed, symmetric monoidal category $\mathcal{E}$, the specified elements $\id_c$ by maps $I\to \T(c;c)$, and all products $\times$ by $\boxtimes$. In the axioms, all equalities on elements should be expressed instead by requiring that the relevant diagrams commute. 

In this paper we will be primarily interested when the underlying symmetric monoidal category $\mathcal{E}$ is the category of simplicial sets with monoidal product the Cartesian product. We will write $\sProp$ for the category of props enriched in simplicial sets. The category $\sProp$ inherits both colimits and limits from taking the corresponding (co)limits levelwise in $\Prop$; further, limits in $\sProp$ are created componentwise in $\sSet$.

\begin{remark}
The definition of (colored) prop we use is weaker than classical definitions \`a la Adams-MacLane.
In the classical definition of prop, the set of operations in bi-arity $(0,0)$ form an abelian group by an Eckmann-Hilton argument.
That is, in the classical setting, there are two separate operations
\begin{equation}\label{varnothing eq}
	\T(\varnothing;\varnothing) \times \T(\varnothing;\varnothing) \to \T(\varnothing;\varnothing)
\end{equation}
given by horizontal and vertical composition; in the graph setting there is but a single graph with two vertices and no edges, so our formalism using graph substitution gives only single morphism \eqref{varnothing eq}, which need not be commutative.
We are grateful to M. Batanin for pointing this out to us; more details may be found in \cite[\S 10]{bb}.
This distinction is irrelevant when considering props which come from operads or properads, or when considering props which only have operations with non-empty output\footnote{
i.e.\ props with $\T(\uc; \varnothing) = \varnothing$ for all $\uc \neq \varnothing$ and $\T(\varnothing; \varnothing) = *$.
} 
(or input), such as props modeling nonunital field theories.%\footnote{
% For this last point, a referee has pointed out that the classical definition of $\fC$-colored prop is a strict symmetric monoidal category $\mathcal P$ with the object set lists of colors in $\fC$ and monoidal product given by concatenation of lists (see \cite{hackneyrobertson1}); hence there is a categorical unit $\id_{\varnothing} \in \mathcal P(\varnothing; \varnothing) \neq \varnothing$.
% This is handled by the referee's observation that there is an isomorphism of categories between the full subcategory of $\Prop$ consisting of those $\T$ with $\T(\uc; \varnothing) = \varnothing$ for all $\uc$ and the full subcategory of classical colored props consisting of those objects $\mathcal P$ with $\mathcal P(\uc;\varnothing) = \varnothing$ whenever $|\uc| > 0$ and $\mathcal P(\varnothing;\varnothing) = *$.
% }
\end{remark}

\subsection{The free prop associated to a megagraph}\label{sec_free_prop}

\begin{definition} The forgetful functor $U:\Prop\rightarrow \Mega$ applied to a prop $\T$ is defined by \[
(U\T)_0 = \col \T \qquad \qquad (U\T)_1 = \coprod_{\underline{a},
\underline{b} \in \M(\col \T)} \T(\underline a ; \underline b) \] with
the induced source and target maps. 
\end{definition} 

\begin{thm}[{\cite[Theorem 14]{hackneyrobertson1}}]
\label{T:freeprop}The functor $U: \Prop \to \Mega$ has a left adjoint $F: \Mega \to \Prop$. \end{thm}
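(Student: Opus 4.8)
The plan is to construct $F(\X)$ explicitly as a prop of $\X$-decorated graphs and verify its universal property by hand. An explicit model is preferable to an abstract adjoint-functor-theorem argument here: the varying color sets make limits in $\Prop$ delicate, and in any case we will want a concrete description of free props in later arguments.

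First I would describe the objects and operations of $F(\X)$. Set $\col F(\X) = X_0$. For profiles $\underline a = \lis{a_i}_{i=1}^n$ and $\underline b = \lis{b_j}_{j=1}^m$ in $\M X_0$, let $F(\X)(\underline a; \underline b)$ be the set of isomorphism classes of acyclic graphs $G$ whose input legs are colored by $\underline a$, whose output legs are colored by $\underline b$, and each of whose vertices $v$ is decorated by an arrow $x_v \in X_1$ in such a way that the colored incoming and outgoing edges at $v$ match $s(x_v)$ and $t(x_v)$ in $\M X_0$; here isomorphisms of decorated graphs are permitted to act on the decorations through the $\Sigma$-actions on $X_1$. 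The structure maps are the evident graph operations: $\id_c$ is the graph consisting of a single edge colored $c$ with no vertices; vertical composition $f\circ_v g$ grafts the output legs of $g$ onto the color-matched input legs of $f$; horizontal composition $f\circ_h g$ is disjoint union of graphs; and $\sigma^*$, $\tau_*$ relabel the input and output legs. Verifying the prop axioms then reduces to elementary identities about grafting and disjoint union.

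The unit $\eta_\X: \X \to UF(\X)$ is the identity on objects and sends an arrow $x\in X_1$ to the corolla, the single-vertex graph decorated by $x$. For the universal property, let $\T$ be a prop and $g: \X \to U\T$ a map of megagraphs. Uniqueness is immediate: any prop homomorphism $\tilde g: F(\X) \to \T$ with $U(\tilde g)\,\eta_\X = g$ must send each corolla to $g_1(x)$, and since every decorated graph is obtained from corollas and identity edges by iterated $\circ_v$, $\circ_h$, and symmetry operations, $\tilde g$ is forced on all of $F(\X)$. For existence, I would define $\tilde g(G)$ to be the value of $G$ in $\T$ obtained by decorating each vertex $v$ by $g_1(x_v)\in\T$ and contracting the graph using the compositions of $\T$.

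The hard part will be showing this value is well defined, i.e.\ independent of the order in which the vertices of $G$ are composed and of the chosen representative of the isomorphism class. Because $G$ is acyclic, its vertices carry a partial order, and any linear extension prescribes a way to assemble $G$ from corollas by repeated $\circ_v$ and $\circ_h$; the content is that the prop axioms force all such assemblies to agree. I would establish this by a normal-form argument: the interchange rule lets one commute composites of vertices that are incomparable in the partial order, associativity handles regrouping, and the equivariance axioms relating $\sigma^*$, $\tau_*$ to the compositions guarantee that the evaluation descends to isomorphism classes of decorated graphs, including compatibility with the $\Sigma$-actions on the decorations. Once well-definedness is in hand, it is routine to check that $\tilde g$ respects all the prop structure and is the unique extension of $g$, which yields the adjunction $F\dashv U$.
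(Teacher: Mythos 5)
Your proposal is correct and matches the approach of the paper's own proof: the theorem is established in the appendix of the companion paper \cite{hackneyrobertson1}, where (as recalled in the discussion preceding Proposition~\ref{NAFA}) the morphisms of $F\X$ are likewise constructed as equivalence classes of graphs whose vertices are decorated by elements of $X_1$, with the $\Sigma$-bimodule structure absorbed into the equivalence. Your identification of the genuine technical content --- well-definedness of evaluation in $\T$ via interchange, associativity, and equivariance --- is exactly where the work lies in that construction.
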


For a concrete  description of $F\X$, see Proposition~\ref{free prop description}. We will repeatedly use the following class of free props in section~\ref{S:modelprop}. 

\begin{definition} The $(n,m)$-corolla $\mathcal{G}_{n,m}$ is the connected graph with one vertex, $n$ inputs and $m$ outputs, and this graph can be naturally modeled by a megagraph $\X$ by letting
\begin{align*}
        X_0 &= \set{a_1, \dots, a_n, b_1, \dots, b_m} &
        s(e,e) &= (a_1, \dots, a_n) \\
        X_1 &= \Sigma_n \times \Sigma_m &
        t(e,e) &= (b_1, \dots, b_m)
\end{align*}
together with the evident symmetric group actions on $X_1$.
\end{definition} 

\begin{definition}\label{D:gnm} If $\X$ is the megagraph described above, which models the $(n,m)$-corolla $\mathcal{G}_{n,m}$, we will also write $\mathcal{G}_{n,m}$ for the prop $F(\X)$. \end{definition}

The prop $\mathcal{G}_{n,m}$ has a single generating operation in $\mathcal{G}_{n,m}(a_1, \dots, a_n; b_1, \dots, b_m)$.

\begin{theorem} \label{sgc is prop}
The collection \[
	\left\{\sgc\dc\right\}
\] forms a $\fC$-colored prop enriched in $\Cat$, with prop structure obtained from graph substitution.
\end{theorem}
\begin{proof}
	The relevant properties are established in \cite[\S 6.3]{yj}.
\end{proof}

The free-forgetful adjunction
\[
	F : \Mega \rightleftarrows \Prop : U
\]
from Theorem \ref{T:freeprop} has an associated comonad $\bot = FU: \Prop \to \Prop$ and an associated monad $\top = UF : \Mega \to \Mega$. In this section we will describe $\Prop$ as the category of algebras for the monad $\top$.

If $\X$ is the underlying megagraph of a prop $\T$ and $G \in \sgc\dc$, we have the map
\[
	\gamma_G: \T[G] \to (\top \T)\dc \to \T \dc = \T ( \inp(G) ; \out(G) )
\]
since $\T$ is an algebra for the monad $\top$. 

% \begin{example} Consider for a moment the special case where $G$ is a non-standard corolla, i.e. 
% \begin{align*}
% 	\inp(G) &= e_1, \dots, e_n & \inp(v) &= e_{\sigma 1}, \dots, e_{\sigma n}\\
% 	\colorFunction(\inp(G)) &= c_1,\dots, c_n \\
% 	\out(G) &= e_{n+1}, \dots, e_{n+m} &  \out(v) &= e_{n+\tau 1}, \dots, e_{n+\tau m} \\
% 	\colorFunction(\out(G)) &= d_1, \dots, d_m
% \end{align*}
% for some permutations $\sigma, \tau$. Then the evaluation map
% $\gamma_G$ is a morphism
% from 
% \[
% 	\T[G] = \T\profilev = \T (c_{\sigma 1}, \dots, c_{\sigma n} ; d_{\tau 1}, \dots, d_{\tau m})
% \]
% to 
% \[
% 	\T ( \inp(G) ; \out(G)) = \T ( c_{ 1}, \dots, c_{ n}  ; d_{ 1}, \dots, d_{ m})
% \]
% which is not the identity, but, rather, induced by the symmetric group actions on the megagraph $U\T$.
% \end{example}

\begin{proposition}\label{colim sgc}
 If $\T$ is a simplicial $\fC$-prop, then there exists a functor, called the a \emph{decoration functor}
\[
	\partial: \sgc\dc \to \sSet
\]
which sends a $\fC$-colored graph $G$ to the simplicial set $\T[G]$.
Furthermore, $\partial$ has the property that $\colim \partial \cong \T\dc$.
\end{proposition}

\begin{proof}
For each morphism $f:G\{K_v\} \to G$ in $\sgc\dc$, we declare $\partial(f)$ to be
\begin{align*}
	\T[G\{K_v\}] &= \prod_{w\in G\{K_v\}} \T ( \inp(w) ; \out(w)) \\
	&= \prod_{v\in G} \prod_{w\in K_v} \T ( \inp(w) ; \out(w)) \\
	&\overset{\prod \gamma_{K_v}}{\to} \prod_{v\in G} \T( \inp(K_v) ; \out(K_v)) = \prod_{v\in G} \T \profilev = \T[G].
\end{align*}

To see that $\colim \partial \cong \T\dc$, recall from Proposition \ref{terminality of corolla} that the standard corolla \eqref{standard_corolla} is the terminal object in $\sgc\dc$, so 
\[ \colim \partial = \partial (C_{\dch}) = \T[C_{\dch}] = \T\dc.\]

\end{proof}

\begin{proposition}\label{free prop description}
	Let $\X$ be a $\fC$-colored megagraph, $\dch$ be a fixed $\fC$ input-output profile, and
	\[
		I \subset \Ob \sgc\dc
	\]
	be a set of representatives for isomorphism classes of $\sgc\dc$.
	Then in the free prop $F\X$ we have
	\[
		F\X \dc \cong \coprod_{G \in I}  \X [G].
	\]
\end{proposition}
\begin{proof}
	This is an unraveling of the description of the free prop given in \cite[Appendix]{hackneyrobertson1}.
\end{proof}

\subsection{The prop generated by an operad}\label{operadtoprop} Another important family of props are colored operads. 
Recall that a colored operad, $\someoperad$, is a
structure with a color set $\col \someoperad$ and hom sets $\someoperad(c_1, \dots, c_n; c)$ for each list of colors $c_1, \dots, c_n,c$, together with appropriate composition operations.
The precise definition is a bit more involved than that of prop since the operadic composition mixes together horizontal and vertical propic compositions (see, for example, \cite{bmresolution}).
We regard colored operads as a special type of prop, namely those which are completely determined by the components with only one output.
Let $\Operad$ be the category of colored operads, which, following the conventions of ~\cite{moerdijklecture}, we refer to as simply
\emph{operads}. 
There is a forgetful
functor \[ U_0: \Prop \to \Operad \] which takes $\T$ to an operad $U_0(\T)$
with $\col U_0(\T) = \col \T$. 
The morphism sets are defined by \[ U_0(\T)
(a_1, \dots, a_n; b) = \T(a_1, \dots, a_n; b) \]  and operadic composition
\[\begin{tikzcd} U_0(\T)(a_1, \dots, a_n; b) \times \prod_{i=1}^n U_0(\T)
(c_{i,1}, \dots , c_{i,p_i}; a_i) \dar{\gamma} \\
U_0(\T)\left( c_{1,1}, \dots, c_{1,p_1},c_{2,1}, \dots, c_{n,p_n}
; b\right) 
\end{tikzcd}\] is given by  \[
\gamma(g, f_1,\dots,f_n) = g\circ_v (f_1 \circ_h \dots \circ_h f_n).\]
In \cite[Proposition 11]{hackneyrobertson1}, we indicate how an operad generates a prop and prove the following result.

\begin{prop}\label{P:operadpropadjunction} The forgetful functor $U_0: \Prop \to\Operad$ has a left adjoint $F_0: \Operad \to \Prop$ with $\col F(\someoperad) =\col \someoperad$.
If $\someoperad$ is an operad then \[ F_0(\someoperad) (a_1, \dots, a_n; b) \cong \someoperad(a_1, \dots, a_n; b). \]
Consequently, $U_0F_0 \cong \id_{\Operad}$.\end{prop}

\noindent The obvious variant holds if one instead considers operads and props enriched in $\sSet$: there is an adjunction $F_0: \sOperad \rightleftarrows \sProp : U_0$, cf. Corollary~\ref{cor42}.

\section{A review of various model category structures}\label{section:model structures}One of the most basic examples of a model category structure is the so-called `natural' model category structure on $\Cat$, the category of all small (non-enriched) categories.

\begin{theorem}\label{T:naturalcat} The category $\Cat$ admits a cofibrantly generated model category structure where:
\begin{itemize}
  \item the weak equivalences are the categorical equivalences;
  \item the cofibrations are the functors $F:\C\rightarrow\D$ which are injective on objects;
  \item the fibrations are the \emph{isofibrations}, i.e.\ functors $F:\C\rightarrow\D$ with the property that for each object $c$ in $\C$ and each isomorphism $f:Fc\rightarrow d$ in $\D$ there exists a $c'$ in $\C$ and an isomorphism $g:c\rightarrow c'$ in $\C$ such that $F(g)=f$.
\end{itemize}\end{theorem}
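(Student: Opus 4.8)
The plan is to exhibit explicit generating sets and invoke the standard recognition theorem for cofibrantly generated model categories. First I would record the formal prerequisites: $\Cat$ is complete and cocomplete, and the class $W$ of categorical equivalences plainly satisfies the two-out-of-three property and is closed under retracts. Since $\Cat$ is locally finitely presentable, every object is small, so all smallness hypotheses in the recognition theorem are automatic and the small object argument applies without restriction.

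Next I would write down the candidate generating cofibrations
\[
I = \set{\, \emptyset \to \mathbf{1}, \quad \mathbf{1} \sqcup \mathbf{1} \to \mathbf{2}, \quad P \to \mathbf{2} \,}
\]
where $\mathbf{1}$ is the terminal category, $\mathbf{2}$ is the arrow category $0\to 1$, and $P$ is the category with two objects and two parallel arrows (the functor $P\to\mathbf{2}$ identifying them), together with the single generating trivial cofibration
\[
J = \set{\, \mathbf{1} \to E \,},
\]
where $E$ is the walking isomorphism, i.e. the contractible groupoid on two objects, and the map picks out one object. A direct lifting analysis identifies $I\text{-inj}$ with the functors that are surjective on objects, full, and faithful, and identifies $J\text{-inj}$ with the isofibrations: the lift against $\emptyset\to\mathbf{1}$ forces surjectivity on objects, against $\mathbf{1}\sqcup\mathbf{1}\to\mathbf{2}$ forces fullness, against $P\to\mathbf{2}$ forces faithfulness, and against $\mathbf{1}\to E$ forces exactly the isomorphism-lifting condition in the statement of Theorem~\ref{T:naturalcat}.

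I would then check the comparisons that feed the recognition theorem. On one side, a functor that is surjective on objects, full, and faithful is precisely a categorical equivalence that is also an isofibration, so $I\text{-inj} = W \cap J\text{-inj}$, and in particular $I\text{-inj}\subseteq W\cap J\text{-inj}$. On the other side, the element of $J$, and hence every pushout, transfinite composite, and retract built from it, freely adjoins an object isomorphic to an existing one; such a map is injective on objects, so lies in $I\text{-cof}$, and is a categorical equivalence, so lies in $W$, giving $J\text{-cof}\subseteq W\cap I\text{-cof}$. To match the stated classes I would also identify $I\text{-cof}$ with the functors injective on objects, which follows by a retract argument from the observation that $I$-cell complexes are injective on objects.

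The main obstacle is the remaining compatibility condition of the recognition theorem, which I would verify in the form $W \cap J\text{-inj} \subseteq I\text{-inj}$: an isofibration that is a categorical equivalence must be surjective on objects, full, and faithful. Fullness and faithfulness are immediate since equivalences are fully faithful, so the crux is surjectivity on objects. I expect this to follow from the lemma that an essentially surjective isofibration is surjective on objects: given an object $d$ in the target, essential surjectivity supplies an isomorphism $d \cong Fc$, which the isofibration lifts to an isomorphism in the source landing on an honest preimage of $d$. With this established, all hypotheses of the recognition theorem hold, and the cofibrantly generated model structure with the asserted weak equivalences, cofibrations, and fibrations results.
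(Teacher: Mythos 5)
The paper does not actually prove Theorem~\ref{T:naturalcat}: it states the result and defers entirely to the cited notes \cite{rezkmodelcat}, so there is no internal argument to compare yours against. Judged on its own, your proof is the standard cofibrant-generation argument, and its core is correct: the generating sets $I$ and $J$ are the usual ones; the lifting analysis ($I\text{-inj}$ = surjective on objects, full, and faithful; $J\text{-inj}$ = isofibrations) is right; and the recognition-theorem hypotheses are verified where the real content lies, namely the inclusion $W \cap J\text{-inj} \subseteq I\text{-inj}$ via the lemma that an essentially surjective isofibration is surjective on objects, and the analysis of pushouts of $\mathbf{1}\to E$ as freely adjoining an object isomorphic to an existing one. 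It is also worth noting that your route through Kan's recognition theorem mirrors exactly the strategy this paper uses for its Main Theorem on $\sProp$, so your proof is, if anything, better aligned with the paper's methods than the external reference is.

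One step needs tightening: the identification of $I\text{-cof}$ with the functors injective on objects. Your stated justification --- a retract argument from the observation that $I$-cell complexes are injective on objects --- only yields the inclusion $I\text{-cof} \subseteq \set{\text{injective on objects}}$ (using that $\obj$ preserves colimits, so pushouts and transfinite composites of object-injective functors remain object-injective, and retracts of injections are injections). The reverse inclusion is what you actually need, both to match the statement's description of the cofibrations and for your claim that maps built from $J$ lie in $I\text{-cof}$ because they are injective on objects. That inclusion requires a direct lifting construction: given a functor injective on objects and a functor that is surjective on objects, full, and faithful, build the lift on objects using injectivity and surjectivity, and on morphisms using fullness and faithfulness (faithfulness guaranteeing well-definedness and functoriality). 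This is routine, but it is the step your sketch elides, and the retract argument cannot substitute for it, since applying the retract argument to a factorization produced by the small object argument already presupposes the very lifting property in question. Alternatively, for the claim $J\text{-cof}\subseteq I\text{-cof}$ alone, it suffices to check the single map $\mathbf{1}\to E$ lies in $I\text{-cof}$ and then invoke closure of $I\text{-cof}$ under pushouts, transfinite composition, and retracts.
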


A proof of this theorem, along with a proof that $\Cat$ is a simplicial model category, may be found in \cite{rezkmodelcat}.
The Bergner model structure on simplicial categories blends the model category structure on $\Cat$ together with the standard model structure on simplicial sets~\cite{gj,QuillenHA}. 

Let $\sSet$ be the category of simplicial sets with the standard model structure~\cite{gj,QuillenHA}, and let $\sCat$ denote the category of \emph{simplicial categories}, by which we mean small categories enriched in $\sSet$. Given a simplicial category $\A$, we can form a genuine category $\pi_0(\A)$ which has the same set of objects as $\A$ and whose set of morphisms $\pi_{0}(\A)(x,y):=[*, \A(x,y)]$ is the set of path components of the simplicial set $\A(x,y).$ This induces a functor $\pi_{0}:\sCat\longrightarrow \Cat,$ with values in the category of small categories.

Given the conditions from Theorem~\ref{T:bergner}, it is immediate that the inclusion $\Cat \to \sCat$ is a right Quillen functor. Hence $\pi_0$ is a left Quillen functor,  so once we have established the model structure on $\sCat$ we will have the (total left derived) functor $\Ho(\sCat)\longrightarrow \Ho(\Cat)$ \cite[8.5.8(1)]{hirschhorn}. In other words, any $F:\C\longrightarrow\D$ in $\Ho(\sCat)$ induces a morphism $\pi_{0}(\C)\longrightarrow\pi_{0}(\D)$.
Note that this set up implies that the \emph{essential image} of a simplicial functor $F:\A\rightarrow\mathcal{B}$ must be defined as the full simplicial subcategory of $\mathcal{B}$ consisting of all objects whose image in the component category $\pi_{0}(\mathcal{B})$ are in the essential image of the functor $\pi_{0}(F)$.

Informally, one can say that inside every simplicial prop lies a simplicial category consisting of the 
operations with only one input and one output. More formally, there exists a forgetful functor 
$U_0:\sProp\rightarrow\sCat$ defined by $\obj U_0(\T) := \col(\T)$, $U_0(\T)(c,d):= \T(c;d)$. 
Composition is given by the applicable $\circ_v$ operations. The functor $U_0$ admits a left adjoint, 
$F_0$, which assigns a category $\C$ to an prop $F_0\C$ with $\col(F_0\C):=\obj(\C)$. 
This adjunction is factored by the adjunction $\sOperad \rightleftarrows \sProp$ from section~\ref{operadtoprop}.

We define the \emph{underlying component category} functor to be \begin{equation}\label{E:cptcat}
\sProp \overset{U_0}\to \sCat \overset{\pi_0}\to \Cat. \end{equation}When there is no ambiguity we will denote $
\pi_0(U_0(\T))$ by $\pi_0(\T)$. 

\subsection{The Bergner model structure on \texorpdfstring{$\sCat$}{sCat}}\label{S:bergnerMS}The following theorem (with `proper' replaced by `right proper') is due to Bergner~\cite{bergner}; left properness was shown by Lurie \cite[A.3.2]{htt}.

\begin{theorem}\label{T:bergner}The category of all small simplicial categories, $\sCat,$ supports a proper, cofibrantly generated, model category structure. The weak equivalences (respectively, fibrations) are the $\sSet$-enriched functors$$F:\mathcal{A}\longrightarrow\mathcal{B}$$ such that:
\begin{description}
  \item[W1 (F1)] for all objects $x,y$ in $\mathcal{A}$, the $\sSet$-morphism $F_{x,y}:\mathcal{A}(x,y)\longrightarrow\mathcal{B}(Fx,Fy)$ is a weak equivalence (respectively, fibration) in the model structure on $\sSet$, and
  \item[W2 (F2)]the induced functor $\pi_0(F):\pi_{0}(\mathcal{A})\longrightarrow \pi_{0}(\mathcal{B})$ is a weak equivalence (respectively, fibration) in $\Cat$.\end{description}\end{theorem}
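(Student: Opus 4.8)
The statement is Bergner's theorem, so rather than reproduce her full argument I will sketch the strategy one follows to build such a structure from scratch. The plan is to apply the recognition principle for cofibrantly generated model categories (Kan's criterion, \cite[Theorem 11.3.1]{hirschhorn}): given the class $W$ of weak equivalences specified by (W1)--(W2) together with two sets $I$ and $J$ of maps, one verifies that $\sCat$ is bicomplete, that $W$ is closed under retracts and satisfies two-out-of-three, that the domains of $I$ and $J$ are small, and finally the two compatibility conditions relating $I\text{-cof}$, $J\text{-cof}$ and $W$. First I would record the formal inputs. The category $\sCat$ is complete and cocomplete, being the category of algebras for an accessible monad on a presheaf category. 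The class $W$ satisfies two-out-of-three because (W1) inherits it from $\sSet$ applied mapping-space-by-mapping-space and (W2) inherits it from the natural structure on $\Cat$ (Theorem~\ref{T:naturalcat}); closure under retracts is checked the same way.

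Next I would write down the generators. Let $[K]$ denote the simplicial category with two objects $0,1$ whose only nontrivial mapping space is $\Hom(0,1)=K$. For the cofibrations take $I$ to consist of $\emptyset\to *$ (the empty simplicial category mapping to the one-object one) together with the functors $[\partial\Delta[n]]\to[\Delta[n]]$ for $n\ge 0$. For the trivial cofibrations take $J$ to consist of the horn inclusions $[\Lambda^k[n]]\to[\Delta[n]]$, for $0\le k\le n$ and $n\ge 1$, together with a set of homotopy-equivalence inclusions $\{x\}\hookrightarrow\mathcal{H}$ detecting the essential-surjectivity content of (W2)/(F2), where $\mathcal{H}$ ranges over a set of simplicial categories on two objects that become isomorphic in $\pi_0$ and are joined by an explicit homotopy equivalence. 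One then unwinds the lifting conditions directly: a functor has the RLP against $I$ exactly when it is surjective on objects and a trivial Kan fibration on every mapping space, and this class coincides with $W\cap(\text{maps satisfying (F1)--(F2)})$; dually a functor has the RLP against $J$ exactly when it satisfies (F1)--(F2), so $J\text{-inj}$ is the intended class of fibrations.

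The main obstacle is the acyclicity condition, namely that every relative $J$-cell complex lies in $W$. This is genuinely delicate because pushouts in $\sCat$ are \emph{not} computed mapping-space-wise: attaching a cell along $[\Lambda^k[n]]\to[\Delta[n]]$ inserts a new generating morphism, and vertical composition with the morphisms already present creates new simplices in many mapping spaces simultaneously. The route through this is to give an explicit model for the mapping spaces of such a pushout --- as a nerve of a category of composable chains, or via a filtration indexed by word-length in the newly adjoined generators --- so that on each individual mapping space the attachment is exhibited as a pushout of a trivial cofibration of simplicial sets, hence a weak equivalence; the homotopy-equivalence generators are then handled by a comparatively easy cofinality argument at the level of $\pi_0$ together with (W1). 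With acyclicity established and the descriptions of $I\text{-inj}$ and $J\text{-inj}$ in hand, the recognition theorem yields the model structure, and right properness reduces to right properness of $\sSet$ and of the natural model structure on $\Cat$, since fibrations and weak equivalences are detected on mapping spaces and on the component categories $\pi_0$, and pullbacks in $\sCat$ are computed compatibly with both.
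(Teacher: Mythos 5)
Your overall strategy is the right one --- this is exactly Bergner's recognition-theorem proof (the paper itself gives no proof of this statement, only the citation to \cite{bergner}), and your generating sets essentially agree with the ones recalled in Propositions~\ref{generatingcofibrationsbergner} and~\ref{generatingtrivialcofibrationsbergner}. However, your treatment of the acyclicity step has the difficulty allocated exactly backwards, and this is a genuine gap. The pushouts along the horn cells $G_{1,1}[\Lambda[n,k]]\to G_{1,1}[\Delta[n]]$ are the comparatively easy case: such a map lives in the category of simplicial categories with a \emph{fixed} object set, where the Dwyer--Kan model structure \cite{dk-simploc} already exhibits it as an acyclic cofibration, and one transfers this along a change of object set (this is precisely what the paper does at the prop level in Lemma~\ref{L:newcolorchange}, using Proposition~\ref{P:pushout}). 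The genuinely hard case is the one you dismiss: pushouts along $\msi\hookrightarrow\msh$. Attaching $\msh$ to a simplicial category $\mst$ adds a new object $y$ \emph{and} creates new morphisms between the old objects --- every composite passing through $y$ or through a nonidentity morphism of $\msh$ --- so condition (W1) for $\mst\to\msp$ is exactly what must be proved; it cannot be quoted as an input to ``a comparatively easy cofinality argument at the level of $\pi_0$ together with (W1).'' Establishing (W1) here is where the word-length filtration actually lives: one filters the mapping spaces of the pushout by the number of factors drawn from $\msh$ and uses weak contractibility and cofibrancy of the mapping spaces of $\msh$ to show each stage of the filtration is a weak equivalence. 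This is the hardest lemma in Bergner's paper, and it is the step the present paper generalizes (via the symmetric-product filtration) in Proposition~\ref{NAFA}; only once (W1) is in hand does the easy $\pi_0$-argument for essential surjectivity dispose of (W2).

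Two smaller inaccuracies. First, your description of the generators $\{x\}\hookrightarrow\mathcal{H}$ is too weak: one needs all mapping spaces of $\msh$ to be weakly contractible, $\msh$ to be cofibrant in the fixed-object Dwyer--Kan structure, and a countability bound on simplices (so that a \emph{set} of isomorphism classes suffices and the domains are small); ``become isomorphic in $\pi_0$'' would break both the filtration argument and the smallness hypotheses of the recognition theorem. Second, right properness is not purely formal: $\pi_0$ does not preserve pullbacks, so (W2) for the pulled-back map needs a separate argument (lifting an isomorphism along the fibration using (F2), as in the paper's closing proposition for props), not an appeal to pullbacks being ``computed compatibly'' with $\pi_0$.
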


Note that if $F:\A\rightarrow\B$ satisfies condition (W1), then $\pi_0(F)$ is automatically fully-faithful. Thus, checking that $F$ satisfies condition (W2) is equivalent to checking that the induced functor$$\pi_0(F):\pi_0(\A)\rightarrow\pi_0(\B)$$is essentially surjective.

Bergner gives an explicit description of the generating (acyclic) cofibrations of this model structure, which we will generalize in section~\ref{S:modelprop}. In order to describe these generating (acyclic) cofibrations we will need some background. Let $\varnothing$ denote the empty category and $\msi=\{x\}$ denote the terminal category, which is the category with one object and one identity arrow (viewed as a simplicial category by applying the strong monoidal functor $\Set\rightarrow\sSet$). Given any simplicial set $K$ we can generate a simplicial category $\mathcal{G}_{1,1}[K]$. The category $\mathcal{G}_{1,1}[K]$ has two objects, called $a_1$ and $b_1$, such that $\Hom(a_1,b_1)=K$, $\Hom(a_1,a_1) \cong \Hom(b_1, b_1) \cong *$, and $\Hom(b_1,a_1) = \varnothing$. 

For concreteness, we fix a set of 
generating cofibrations for $\sSet$ to be the boundary inclusions 
\begin{equation}\label{E:bdry}
\partial \Delta[n] \to \Delta[n]
 \qquad n\geq 0,
\end{equation} and a set of generating acyclic cofibrations to be the horn inclusions 
\begin{equation}\label{E:horni}
\Lambda[n,k] \to \Delta[n] \qquad n\geq 1, 0\leq k \leq n
\end{equation}
(see, for example, \cite[11.1.6]{hirschhorn} or \cite{QuillenHA}). The following two propositions appear in \cite{bergner}.

\begin{prop}\label{generatingcofibrationsbergner}A functor of simplicial categories $F:\A\rightarrow\B$ is an acyclic fibration if, and only if, $F$ has the right lifting property (RLP) with respect to all the maps\begin{itemize}
\item $\mathcal{G}_{1,1}[K]\hookrightarrow \mathcal{G}_{1,1}[L]$ where $K\hookrightarrow L$ is a generating cofibration of $\sSet$, and
\item the map $\varnothing\hookrightarrow\msi$.\end{itemize}\end{prop}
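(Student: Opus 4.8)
The plan is to prove the biconditional by recognizing that being an acyclic fibration is precisely condition (W1) together with (F1), and then matching each of these two conditions to right lifting against one of the two families of generating maps. First I would observe that a functor $F:\A\rightarrow\B$ is an acyclic fibration exactly when it is simultaneously a fibration and a weak equivalence, which by Theorem~\ref{T:bergner} amounts to requiring that each $F_{x,y}:\A(x,y)\rightarrow\B(Fx,Fy)$ be an acyclic fibration in $\sSet$ (combining W1 and F1) and that $\pi_0(F)$ be an acyclic fibration in $\Cat$. By Theorem~\ref{T:naturalcat}, an acyclic fibration in $\Cat$ is an isofibration that is a categorical equivalence; one checks that such a functor is characterized as being injective on objects with the additional surjectivity/lifting data, so that the combined local and component-level conditions can be repackaged as lifting problems.

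Next I would handle the two lifting families separately. For the maps $G_{1,1}[K]\hookrightarrow G_{1,1}[L]$ arising from a generating cofibration $K\hookrightarrow L$ of $\sSet$, the key point is that a lift against such a map picks out a pair of objects $x=Fa_1$-preimage data together with a filler for the simplicial-set lifting problem $K\rightarrow\B(Fx,Fy)$ against $L$. Since $\Hom(a_1,b_1)=K$ in $G_{1,1}[K]$ while all other hom-objects are either $*$ or $\emptyset$, a solution to the lifting problem is exactly a factorization producing a lift of $K\rightarrow\A(x,y)$ through $L$ compatible with $F_{x,y}$. I would argue that $F$ has the RLP against all these maps precisely when every $F_{x,y}$ has the RLP against every generating cofibration $K\hookrightarrow L$ of $\sSet$, which by the standard theory of the Kan model structure is equivalent to each $F_{x,y}$ being an acyclic fibration of simplicial sets, i.e. condition (W1)$\,\wedge\,$(F1) at the level of hom-objects.

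For the map $\emptyset\hookrightarrow\msi$, I would note that a lifting problem consists of an object $b\in\B$ (the image of the unique object of $\msi$) together with the empty data from $\emptyset$, and a solution is an object $a\in\A$ with $Fa=b$. Hence $F$ has the RLP against $\emptyset\hookrightarrow\msi$ exactly when $F$ is surjective on objects. Combining this with the previous paragraph, RLP against both families says that $F$ is surjective on objects and a local acyclic fibration. I would then verify that surjectivity on objects plus the local condition is equivalent to $\pi_0(F)$ being an acyclic fibration in $\Cat$: surjectivity on objects forces $\pi_0(F)$ to be surjective on objects (hence an isofibration that is essentially surjective), while the local acyclic fibration condition makes $\pi_0(F)$ fully faithful, and together these give a surjective-on-objects equivalence, which is exactly an acyclic fibration of categories.

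The main obstacle I expect is the bookkeeping in this last equivalence, namely showing cleanly that surjectivity on objects together with each $F_{x,y}$ being an acyclic fibration of simplicial sets forces $\pi_0(F)$ to be not merely essentially surjective but to satisfy the isofibration lifting on the nose, and conversely that an acyclic fibration in $\Cat$ combined with local conditions does not require any additional surjectivity beyond what the $\emptyset\hookrightarrow\msi$ lift provides. The subtlety is that an acyclic fibration of categories need only be surjective on objects up to isomorphism a priori, whereas the strict lift against $\emptyset\hookrightarrow\msi$ gives honest surjectivity; I would reconcile these by using that $\pi_0$ of a local acyclic fibration already makes $\pi_0(F)$ fully faithful, so full faithfulness together with genuine surjectivity on objects is more than enough to conclude, and is in fact automatically available whenever $F$ is an acyclic fibration in $\sCat$.
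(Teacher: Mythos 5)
Your argument is correct, and it is essentially the standard one: the paper itself gives no proof of this proposition (it is quoted from \cite{bergner}), and what you have written reconstructs Bergner's argument. Your decomposition --- RLP against the maps $G_{1,1}[K]\hookrightarrow G_{1,1}[L]$ is equivalent to each $F_{x,y}:\A(x,y)\to\B(Fx,Fy)$ being an acyclic fibration of simplicial sets; RLP against $\emptyset\hookrightarrow\msi$ is equivalent to surjectivity on objects; and, in the presence of the local condition (which makes $\pi_0(F)$ fully faithful), surjectivity on objects is equivalent to $\pi_0(F)$ being an acyclic fibration in $\Cat$ --- is exactly the intended route. You also correctly identify and resolve the only delicate point: an equivalence of categories that is an isofibration is genuinely surjective on objects (lift an isomorphism witnessing essential surjectivity), so the strict lift against $\emptyset\hookrightarrow\msi$ really is available in the forward direction; and conversely full faithfulness plus honest surjectivity on objects yields the isofibration condition on the nose, since fully faithful functors reflect isomorphisms.

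One slip worth correcting: in your first paragraph you say an acyclic fibration in $\Cat$ is ``characterized as being injective on objects with the additional surjectivity/lifting data.'' Injectivity on objects is the description of the \emph{cofibrations} in Theorem~\ref{T:naturalcat}; the acyclic fibrations in $\Cat$ are the functors that are fully faithful and surjective on objects (equivalently, equivalences that are isofibrations), and such functors need not be injective on objects. Since the remainder of your proof only ever uses the correct characterization, this misstatement does not damage the argument, but as written that sentence is false and should be deleted.
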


To describe the generating acyclic cofibrations of simplicial categories, Bergner looks at a family of simplicial categories $\msh$ which have two objects $x$ and $y$, weakly contractible function complexes, and only countably many simplices in each function complex. Furthermore, she requires $\msh$ to be cofibrant in the Dwyer-Kan model category structure on $\sCat_{\{x,y\}}$ \cite{dk-simploc}. Let $\mathbf{H}$ denote a set of representatives of isomorphism classes of such categories.

\begin{prop}\label{generatingtrivialcofibrationsbergner}A functor of simplicial categories $F:\A\rightarrow\B$ is a fibration if, and only if, $F$ has the right lifting property (RLP) with respect to maps of the form\begin{itemize}
\item $\mathcal{G}_{1,1}[K]\hookrightarrow \mathcal{G}_{1,1}[L]$ where $K\hookrightarrow L$ is a generating acyclic cofibration of $\sSet$, and
\item the maps $\msi\hookrightarrow\msh$ for $\msh \in \mathbf{H}$ which send $x$ to $x$.
\end{itemize}\end{prop}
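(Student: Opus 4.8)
The plan is to prove the logically equivalent statement that $F$ has the right lifting property against the two displayed families if and only if $F$ satisfies conditions (F1) and (F2) for fibrations in Theorem~\ref{T:bergner}; since those conditions are the definition of a fibration, this suffices. Equivalently, I would argue that the two families together form a set of generating acyclic cofibrations for the Bergner model structure.

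For the direction ``fibration $\Rightarrow$ RLP,'' it is enough to check that each displayed map is an acyclic cofibration, since a fibration lifts against every acyclic cofibration. For $G_{1,1}[K]\hookrightarrow G_{1,1}[L]$ with $K\hookrightarrow L$ a horn inclusion, condition (W1) holds because the only nontrivial map of function complexes is $K\to L$, a weak equivalence of simplicial sets; condition (W2) holds because both component categories are the free category on a single arrow $a_1\to b_1$ and $\pi_0(F)$ is the identity; and it is a cofibration because $G_{1,1}[-]$ carries the monomorphism $K\hookrightarrow L$, built cellularly from the generating cofibrations $\partial\Delta[n]\to\Delta[n]$, to a cofibration assembled from the maps of Proposition~\ref{generatingcofibrationsbergner}. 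For $\msi\hookrightarrow\msh$, condition (W1) holds because $\msh$ has weakly contractible function complexes, condition (W2) holds because $\pi_0(\msh)$ is the contractible groupoid on two objects and hence equivalent to $\msi$, and it is a cofibration by the transported cofibrancy of $\msh$ in $\sCat_{\{x,y\}}$.

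For the direction ``RLP $\Rightarrow$ fibration,'' I would verify (F1) and (F2) separately. Condition (F1) follows from the first family: by the defining universal property a map $G_{1,1}[K]\to\A$ amounts to a choice of two (not necessarily distinct) objects $x,y$ together with a map $K\to\A(x,y)$, so a lifting problem for $G_{1,1}[\Lambda[n,k]]\hookrightarrow G_{1,1}[\Delta[n]]$ against $F$ is the same datum as a lifting problem for $\Lambda[n,k]\hookrightarrow\Delta[n]$ against $F_{x,y}\colon\A(x,y)\to\B(Fx,Fy)$; having all such lifts says precisely that each $F_{x,y}$ is a Kan fibration. Condition (F2) follows from the second family: given an object $a$ of $\A$ and an isomorphism $f\colon Fa\to b'$ in $\pi_0(\B)$, I would encode the coherent invertibility data witnessing $f$ as a map $\msh\to\B$ out of a suitable $\msh\in\mathbf H$ carrying $x\mapsto Fa$ and $y\mapsto b'$, pair it with the map $\msi\to\A$ selecting $a$, and solve the resulting lifting square against $\msi\hookrightarrow\msh$; a lift produces an object $a'\in\A$ (the image of $y$) and a homotopy equivalence $a\simeq a'$ lying over $f$, whose class in $\pi_0(\A)$ is the isomorphism required by the isofibration property.

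The main obstacle is the construction just sketched for (F2): one must show that an arbitrary isomorphism of $\pi_0(\B)$ is presentable by a map out of a simplicial category belonging to the fixed set $\mathbf H$. This needs a model of a homotopy-coherent isomorphism by a two-object simplicial category with contractible function complexes, together with a cardinality argument---exploiting the countability and cofibrancy restrictions defining $\mathbf H$---to guarantee both that the data can be chosen countable and cofibrant and that $\mathbf H$ is a genuine set of isomorphism-class representatives rather than a proper class. A secondary technical point, used in the first direction, is the transport of cofibrancy between the fixed-object structures $\sCat_{\mathcal O}$ and $\sCat$; I would dispatch this by invoking the comparison between the Dwyer--Kan and Bergner model structures established in \cite{bergner}.
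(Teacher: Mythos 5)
First, a point of comparison: the paper does not prove this proposition at all --- it is quoted from Bergner's paper \cite{bergner} (``The following two propositions appear in \cite{bergner}''), so your attempt is in effect a reconstruction of Bergner's argument and must be judged on whether it actually closes. The routine parts of your proposal are fine: granting Theorem~\ref{T:bergner} (so the model structure on $\sCat$ already exists, which makes your appeal to ``fibrations lift against acyclic cofibrations'' legitimate rather than circular in the context of this paper), your verification that $G_{1,1}[K]\hookrightarrow G_{1,1}[L]$ is an acyclic cofibration is correct, and the adjunction argument identifying lifting problems against $G_{1,1}[\Lambda[n,k]]\hookrightarrow G_{1,1}[\Delta[n]]$ with lifting problems for $\Lambda[n,k]\hookrightarrow\Delta[n]$ against $F_{x,y}\colon\A(x,y)\to\B(Fx,Fy)$ does yield (F1) exactly as you say.

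The genuine gap is the step you yourself flag as ``the main obstacle'': given an object $a$ of $\A$ and an isomorphism $f\colon Fa\to b'$ in $\pi_0(\B)$, you must produce some $\msh\in\mathbf{H}$ and a simplicial functor $\msh\to\B$ sending $x\mapsto Fa$ and $y\mapsto b'$ and carrying a vertex of $\msh(x,y)$ to a representative of $f$. This is not a technical footnote; it is the entire mathematical content of the proposition. The set $\mathbf{H}$ was defined (two objects, weakly contractible function complexes, countably many simplices, cofibrant in the Dwyer--Kan structure on $\sCat_{\{x,y\}}$) precisely so that this realization lemma holds, and proving it is the hard construction in Bergner's paper: one builds, by a countable induction attaching free cells, a cofibrant two-object resolution of the homotopy-coherent isomorphism determined by $f$, and one checks that the result still has weakly contractible function complexes and countably many simplices. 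Saying that this ``needs a model of a homotopy-coherent isomorphism together with a cardinality argument'' names the missing lemma; it does not prove it. The same remark applies, less critically, to the other input you defer (that Dwyer--Kan cofibrancy of $\msh$ in $\sCat_{\{x,y\}}$ makes $\msi\hookrightarrow\msh$ a cofibration in $\sCat$, which your first direction needs and which Proposition~\ref{generatingcofibrationsbergner} alone does not give). Without these two inputs your argument is a correct reduction of the proposition to Bergner's key lemmas, not a proof of it --- which, to be fair, leaves you exactly where the paper itself stands, since the paper simply cites \cite{bergner}.
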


\subsection{Categories of props with a fixed set of colors} Fix a set $\mathfrak{C}$ and consider the category of all simplicial props with a \emph{fixed} set of colors $\mathfrak{C}$. We will denote this category by $\sProp_{\mathfrak{C}}.$ In this category the morphisms are the morphisms of simplicial props $f:\R\rightarrow\T$ such that the induced map on colors $f:\col(\R)\rightarrow\col(\T)$ is the identity map on $\mathfrak{C}$. In particular, all of the morphisms in $\sProp_{\mathfrak{C}}$ are essentially surjective. This category admits a model structure analogous to the Dwyer-Kan model structure on simplicial categories with a fixed object set \cite{dk-simploc}. The following theorem was established (for more general enrichments) by Fresse in the monochrome case \cite{fresse} and Johnson and Yau \cite[3.11]{jy} in the fixed color case; alternatively it is a special case of \cite[2.1]{bmresolution} since $\sProp_{\mathfrak{C}}$ is a category of algebras over a $\M \mathfrak{C} \times \M \mathfrak{C}$-colored operad.

\begin{theorem}\label{T:jy} The category of simplicial props with fixed color set $\mathfrak{C}$ admits a cofibrantly generated model category structure where $f:\R\rightarrow\T$ is a weak equivalence (respectively, fibration) if for each input-output profile $\ba$ in $\mathfrak{C}$ the map$$f:\R(\ua;\ub)\longrightarrow\T(\ua;\ub)$$is a weak equivalence (respectively, fibration) of simplicial sets.\end{theorem}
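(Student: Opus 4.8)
The plan is to transfer the known model structure along the free--forgetful adjunction, following the standard Kan/Quillen transfer machinery. The forgetful functor to the underlying collection of simplicial sets (indexed by the fixed profiles over $\mathfrak{C}$) creates a product category $\prod_{\underline a, \underline b} \sSet$, which carries an obvious product model structure. The weak equivalences and fibrations in the statement are exactly those maps whose image under this forgetful functor is a levelwise weak equivalence or fibration. So the first step is to exhibit the adjoint pair between $\sProp_{\mathfrak{C}}$ and this product category, with the left adjoint a free-prop construction that is a restricted version of Theorem~\ref{T:freeprop} keeping the object set fixed at $\mathfrak{C}$. This adjunction lets us define candidate generating (acyclic) cofibrations as the free-prop images of the generating (acyclic) cofibrations $\partial\Delta[n]\to\Delta[n]$ and $\Lambda[n,k]\to\Delta[n]$ of $\sSet$, placed in each profile.

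Next I would verify the hypotheses of the transfer theorem (in the cofibrantly generated form, e.g. the version in \cite{hirschhorn} or Crans' theorem). Two conditions must be checked. First, the domains of the proposed generating sets must be small relative to the relevant subcategories; this is a smallness/filtered-colimit argument, and it follows because the underlying simplicial sets are, and free props are built by transfinite composition of pushouts of generators, so colimits in $\sProp_{\mathfrak{C}}$ are computed profilewise after passing through the monad. Second, and this is the crux, every relative cell complex built from the image of the generating acyclic cofibrations must be sent to a weak equivalence by the forgetful functor. The essential point is that pushing out a free acyclic-cofibration generator in $\sProp_{\mathfrak{C}}$ yields, on underlying simplicial sets, a map that is still an acyclic cofibration in each profile.

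The hard part will be precisely this last verification: controlling the underlying simplicial sets of a pushout of free props. Because we have fixed the color set, the free prop on a generator does not introduce new colors, but it does introduce new formal horizontal and vertical composites, so a single generating horn inclusion in profile $(\underline a;\underline b)$ propagates into many other profiles through $\circ_h$ and $\circ_v$ with identities and with the symmetric group actions. The key lemma is therefore an explicit analysis showing that this propagation, at the level of underlying simplicial sets, is a filtered colimit of products of the original horn inclusion with constant (hence cofibrant) factors, so that each induced profilewise map remains a trivial cofibration; here one uses that $\sSet$ with the Cartesian product is a monoidal model category and that $\Lambda[n,k]\to\Delta[n]$ times any simplicial set is still a trivial cofibration. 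Since fixing the colors rules out the object-varying phenomena that make the general case delicate (and which the introduction flags as the main difficulty in Proposition~\ref{The Hard Part}), this reduces to a monadic, profilewise computation analogous to the fixed-object Dwyer--Kan case for simplicial categories.

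Once transfer applies, I would record that the resulting model structure is cofibrantly generated with the stated classes, and note the two immediate consequences: the weak equivalences and fibrations are as claimed by construction, and the cofibrations are determined as the maps with the left lifting property against acyclic fibrations. Finally I would remark that this is the expected fixed-color analogue of the Dwyer--Kan and Johnson--Yau structures, and that it specializes correctly to the monochrome Fresse case, so that Theorem~\ref{T:jy} can be invoked as a black box in the subsequent analysis of the full category $\sProp$ with varying colors.
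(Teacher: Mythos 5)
First, a point about the comparison itself: the paper does not prove Theorem~\ref{T:jy}. It quotes the result from the literature, crediting Fresse \cite{fresse} in the monochrome case and Johnson--Yau \cite[3.11]{jy} in the fixed-color case, so your proposal has to be measured against those proofs. Your overall framework --- transfer along the free--forgetful adjunction to the profilewise category of simplicial sets, generating sets obtained by applying the free functor to $\partial\Delta[n]\to\Delta[n]$ and $\Lambda[n,k]\to\Delta[n]$ in each profile, smallness from the fact that the free-prop monad preserves filtered colimits --- is the standard and correct skeleton. The gap is in the step you yourself call the crux. You assert that pushing out a free generating acyclic cofibration produces, profilewise, ``a filtered colimit of products of the original horn inclusion with constant factors,'' so that acyclicity follows because a trivial cofibration of simplicial sets stays trivial after multiplying by any simplicial set. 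That is not what such a pushout looks like. In $\msp=\T\amalg_{F(K)}F(L)$ the morphism spaces are built from graphs whose vertices are decorated by operations of $\T$ together with possibly \emph{several} points of $L$; filtering by the number of $L$-labelled vertices, the passage from one stage to the next is a pushout along maps of the form $(\text{$\T$-decorations})\times Q_k\to(\text{$\T$-decorations})\times L^{\times k}$, \emph{quotiented by finite groups} of symmetries (permutations of the $k$ generator slots combined with graph automorphisms), where $Q_k\to L^{\times k}$ is the $k$-fold iterated pushout-product of $K\to L$ with itself. So you need the pushout-product axiom, not merely products with constant factors; and, more seriously, quotients by finite group actions do not preserve acyclic cofibrations of simplicial sets in general: $EG\hookrightarrow C(EG)$ is an equivariant acyclic cofibration whose quotient $BG\hookrightarrow C(BG)$ is not a weak equivalence. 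Controlling exactly these quotients is the real content of Fresse's proof, and it is the same difficulty the present paper points to when it says that pushouts of props, unlike pushouts of operads, have no canonical representation in the underlying category.

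You should also know that the cited fixed-color proof sidesteps your hard step entirely. Johnson--Yau verify the \emph{path-object} half of the transfer lemma (in Berger--Moerdijk form) rather than analyzing cell attachments: the free-prop monad preserves filtered colimits; Kan's $\operatorname{Ex}^\infty$ preserves finite products, hence applied entrywise gives a fibrant replacement functor on $\sProp_{\mathfrak{C}}$; and for an entrywise fibrant prop $\T$ the entrywise construction $\T^{\Delta[1]}$ is again a prop (because $(-)^{\Delta[1]}$ preserves products, using that $\Delta[1]$ is a cocommutative interval for the Cartesian structure) and factors the diagonal $\T\to\T^{\Delta[1]}\to\T\times\T$ as a weak equivalence followed by a fibration. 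This yields that relative $J$-cell complexes are weak equivalences with no pushout analysis at all. To complete your write-up, either switch to this path-object argument, or carry out the filtration honestly with the iterated pushout-products and the equivariance issues addressed, as Fresse does; as it stands, the proposed justification of the crucial step does not go through.
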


Given a map of sets $\alpha:\mathfrak{C} \to \mathfrak{D}$ there is an induced adjoint pair of functors
\[\alpha_{!}:\sProp_{\mathfrak{C}}\rightleftarrows\sProp_{\mathfrak{D}}:\alpha^{*}.\]
If $\T$ is a prop with color set $\mathfrak{D}$, then the prop $\alpha^{*}\T$ has colors $\mathfrak{C}$ and operations
\[
        \alpha^{*}\T(\ua; \ub)=\T(\alpha \ua ; \alpha \ub).
\]
Notice that there is a canonical map $\alpha^*\T \to \T$ with color function $\alpha$, and any morphism $\R \to \T$ with color function $\alpha$ factors uniquely as $\R \to \alpha^*\T \to \T$.

\begin{prop}\cite[7.5]{jy}\label{prop37} The adjunction $(\alpha_!,\alpha^*)$ is a Quillen pair.
\end{prop}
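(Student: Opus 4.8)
The plan is to show that the adjunction $(\alpha_!, \alpha^*)$ is a Quillen pair by verifying that the right adjoint $\alpha^*$ preserves fibrations and acyclic fibrations, using the explicit description of these classes given in Theorem~\ref{T:jy}. Since this is the simplest of the three equivalent characterizations of a Quillen pair, I would work on the right-adjoint side because the fibrations and weak equivalences in $\sProp_{\mathfrak{C}}$ and $\sProp_{\mathfrak{D}}$ are defined levelwise on the profile-indexed simplicial hom-objects, and the restriction functor $\alpha^*$ has a completely transparent effect on these hom-objects.

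First I would recall the definition of $\alpha^*$: for a prop $\T \in \sProp_{\mathfrak{D}}$, the restriction $\alpha^*\T$ has color set $\mathfrak{C}$, and its operations are given by pulling back along $\alpha$, so that
\begin{equation*}
(\alpha^*\T)(\lis{a_i}_{i=1}^n; \lis{b_j}_{j=1}^m) = \T(\lis{\alpha a_i}_{i=1}^n; \lis{\alpha b_j}_{j=1}^m)
\end{equation*}
with all compositions, identities, and symmetric group actions inherited directly from $\T$. The key observation is that $\alpha^*$ acts on each hom-object simply by reindexing along $\alpha$, introducing no colimits or other constructions that could disturb the model-categorical structure.

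Next I would take a fibration (respectively, acyclic fibration) $f: \R \to \T$ in $\sProp_{\mathfrak{D}}$ and examine $\alpha^*f: \alpha^*\R \to \alpha^*\T$. For each input-output profile $a_1, \dots, a_n; b_1, \dots, b_m$ in $\mathfrak{C}$, the induced map on hom-objects
\begin{equation*}
(\alpha^*f): (\alpha^*\R)(\lis{a_i}_{i=1}^n; \lis{b_j}_{j=1}^m) \longrightarrow (\alpha^*\T)(\lis{\alpha a_i}_{i=1}^n; \lis{\alpha b_j}_{j=1}^m)
\end{equation*}
is literally equal to the map $f: \R(\lis{\alpha a_i}; \lis{\alpha b_j}) \to \T(\lis{\alpha a_i}; \lis{\alpha b_j})$ on the profile $\alpha a_1, \dots, \alpha a_n; \alpha b_1, \dots, \alpha b_m$ in $\mathfrak{D}$. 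Since $f$ is a fibration (respectively, acyclic fibration) in $\sProp_{\mathfrak{D}}$, this latter map is a fibration (respectively, acyclic fibration) of simplicial sets by Theorem~\ref{T:jy}. Hence every component of $\alpha^*f$ is a fibration (respectively, acyclic fibration) of simplicial sets, so $\alpha^*f$ is again a fibration (respectively, acyclic fibration) in $\sProp_{\mathfrak{C}}$, again by Theorem~\ref{T:jy}.

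Since $\alpha^*$ preserves both fibrations and acyclic fibrations, the adjunction $(\alpha_!, \alpha^*)$ is a Quillen pair, completing the proof. In contrast to the main technical difficulties of this paper, there is no genuine obstacle here: because both model structures are defined entirely by profile-wise conditions on simplicial sets, and because $\alpha^*$ is a purely combinatorial reindexing that commutes with the levelwise structure, the verification reduces to matching up definitions. The only point requiring minor care is to confirm that $\alpha^*$ indeed sends a profile in $\mathfrak{C}$ to the correctly substituted profile in $\mathfrak{D}$, so that the levelwise comparison is with the correct hom-objects; this is immediate from the formula for $\alpha^*$ above.
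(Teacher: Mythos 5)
Your proof is correct and takes the expected route: the paper itself does not prove this proposition (it cites Johnson--Yau [7.5]), and the natural verification is exactly yours --- fibrations and acyclic fibrations in both fixed-color model structures of Theorem~\ref{T:jy} are defined profile-wise on simplicial hom-objects, and $\alpha^*$ merely reindexes those hom-objects along $\alpha$, so it preserves both classes, making $(\alpha_!,\alpha^*)$ a Quillen pair. One small notational slip: the codomain of your displayed map should be $(\alpha^*\T)(\lis{a_i}_{i=1}^n;\lis{b_j}_{j=1}^m)$ rather than $(\alpha^*\T)(\lis{\alpha a_i}_{i=1}^n;\lis{\alpha b_j}_{j=1}^m)$ (the latter is a profile in $\mathfrak{D}$, which does not typecheck for a prop with color set $\mathfrak{C}$), but your identification of this map with the component of $f$ at the profile $\alpha a_1,\dots,\alpha a_n;\alpha b_1,\dots,\alpha b_m$ is exactly right and carries the argument.
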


\section{The model structure on \texorpdfstring{$\sProp$}{sProp}}\label{S:modelprop}

We now turn to the main result of the paper, namely the model structure on the category of simplicial props.

\begin{definition}\label{WEandfibrations} Let $\R$ and $\T$ be simplicial props, and let $f:\R \to \T$ be a morphism of simplicial props.
We say that $f$ is a \emph{weak equivalence} if
\begin{description}
\item[W1]\label{W1} for each input-output profile $\ba$ in $\col(\mathcal{R})$ the morphism$$f:\R(\ua; \ub)\longrightarrow\T(f\ua; f\ub)$$is a weak homotopy equivalence of simplicial sets; and
\item[W2]\label{W2} the functor $\pi_{0}f:\pi_{0}\R\rightarrow\pi_{0}\T$ from \eqref{E:cptcat} is an equivalence of categories. \end{description}
We say that the morphism $f$ is a \emph{fibration} if
\begin{description}\label{fibration}
\item [F1]\label{F1} for each input-output profile $\ba$ in $\col(\mathcal{R})$ the morphism$$f:\R(\ua; \ub))\longrightarrow\T(f\ua; f\ub)$$is a Kan fibration of simplicial sets; and
\item [F2]\label{F2} the functor $\pi_{0}f:\pi_{0}\R\rightarrow\pi_{0}\T$ is an isofibration.
\end{description}
\end{definition}

\begin{mainthm}\label{mainthm}The category of simplicial props $\sProp$ admits a cofibrantly generated model 
category structure with the above classes of weak equivalences and fibrations. Cofibrations are those 
morphisms which have the left lifting property (LLP) with respect to the acyclic fibrations.
\end{mainthm}

In the special case where we consider simplicial categories as simplicial props, the conditions (W1) and (W2) are precisely those conditions that give a weak equivalence of simplicial categories in the Bergner model structure.
In the case where $\R$ and $\T$ are two simplicial props with the same set $\mathfrak{C}$ of 
colors and $f$ acts as the \emph{identity} of $\mathfrak{C}$, then condition (W1) implies (W2). Thus 
such an $f$ is a weak equivalence if and only if it is a weak equivalence in the Johnson-Yau model 
structure on $\sProp_{\mathfrak{C}}$ from Theorem~\ref{T:jy}.

We further note that there is a similar model structure on the category of simplicial colored operads, $\sOperad$, which was shown by the second author \cite{robertson1} and independently by Cisinski and Moerdijk \cite{cm-simpop}.
Notice that the model structure on $\sProp$ is \emph{not} lifted from the model structure on $\sOperad$ across the adjunction $(F_0,U_0)$.
\begin{corollary}\label{cor42} The adjunction $F_0: \sOperad \rightleftarrows\sProp: U_0$ is a Quillen adjunction. \end{corollary}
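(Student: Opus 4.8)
The plan is to show that the right adjoint $U\colon \sProp \to \sOperad$ preserves fibrations and acyclic fibrations; since the pair $(F,U)$ is already an adjunction by the enriched form of Proposition~\ref{P:operadpropadjunction}, this is exactly what is needed. I will use that the model structure on $\sOperad$ from \cite{robertson1, cm-simpop} is the evident operadic analogue of the one just defined: a map $g\colon \someoperad \to \someoperad'$ is a fibration (respectively weak equivalence) precisely when each induced map of hom simplicial sets $\someoperad(\lis{a_i}_{i=1}^n; b) \to \someoperad'(\lis{ga_i}_{i=1}^n; gb)$ is a Kan fibration (respectively weak homotopy equivalence), and the induced functor $\pi_0 g$ on underlying component categories is a fibration (respectively equivalence) of categories in the sense of Theorem~\ref{T:naturalcat}.

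The crux will be a single bookkeeping observation: the forgetful functor $U$ is compatible with passage to underlying component categories, and it picks out exactly the single-output operations of a prop. Indeed, for any prop $\T$ one has $U(\T)(c; d) = \T(c; d)$, so the simplicial category of one-input operations of the operad $U(\T)$ is literally $U_0(\T)$; hence $\pi_0(U(\T)) = \pi_0(\T)$, naturally in $\T$. Likewise, the hom objects $U(\T)(\lis{a_i}_{i=1}^n; b) = \T(\lis{a_i}_{i=1}^n; b)$ are precisely the components of $\T$ at the single-output profiles $\lis{a_i}_{i=1}^n; b$.

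With this in hand the verification is immediate. Suppose $f\colon \R \to \T$ is a fibration in $\sProp$. The operadic condition (F1) for $U(f)$ asks that each map on $U(\R)(\lis{a_i}_{i=1}^n; b) = \R(\lis{a_i}_{i=1}^n; b)$ be a Kan fibration; but this map is exactly the component of $f$ at the single-output profile $\lis{a_i}_{i=1}^n; b$, which is a Kan fibration by the propic (F1). The operadic condition (F2) asks that $\pi_0(U(f))$ be a fibration of categories; under the identification above this functor coincides with $\pi_0(f)$, which is a fibration of categories by the propic (F2). Thus $U(f)$ is a fibration. Running the identical argument with (W1) and (W2) in place of (F1) and (F2) shows that $U$ carries weak equivalences to weak equivalences. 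Since an acyclic fibration is exactly a map that is simultaneously a fibration and a weak equivalence, $U$ preserves acyclic fibrations as well, so $U$ is a right Quillen functor and $(F,U)$ is a Quillen adjunction.

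I do not expect a genuine obstacle here: the argument is formal once both model structures are in place. The only step that must be pinned down carefully is the naturality of the identification $\pi_0(U(\T)) \cong \pi_0(\T)$ together with the observation that the operadic conditions are nothing more than the restrictions of (W1)/(F1) to single-output profiles paired with the unchanged underlying-category condition (W2)/(F2). All of the real technical difficulty lives upstream, in the construction of the model structure on $\sProp$ asserted by the Main Theorem; this corollary simply harvests that structure.
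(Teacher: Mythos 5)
Your proof is correct and takes essentially the same route as the paper, whose entire proof is the one-line observation that $U$ preserves fibrations and acyclic fibrations. You have simply spelled out the bookkeeping behind that observation: the operadic conditions (F1)/(W1) are the restrictions of the propic ones to single-output profiles, and $\pi_0(U(\T)) = \pi_0(\T)$, so $U$ preserves fibrations and weak equivalences, making it a right Quillen functor.
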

\begin{proof} The forgetful functor $U_0$ preserves fibrations and acyclic fibrations.
\end{proof}

We provide an explicit description of the generating cofibrations and generating acyclic cofibrations.  For this purpose, we will need the following construction. Each $(n,m)$-corolla determines a prop $\mathcal{G}_{n,m}$, as described in Definition~\ref{D:gnm}. 
Then, given a simplicial set $X$ and integers $n,m\geq 0$, we denote by $\mathcal{G}_{n,m}[X]$ the free simplicial prop with $(n,m)$-ary operations decorated by the simplicial set $X$. 
In other words, the simplicial prop $\mathcal{G}_{n,m}[X]$ has colors $a_1,...,a_n;b_1,\ldots,b_m$ and
\begin{equation}\label{eq:gnm}
\mathcal{G}_{n,m}[X](a_1,\ldots,a_n;b_1,...,b_m)=X. 
\end{equation}
This construction is functorial in $X$.

\begin{remark}\label{R:characterization} The props $\mathcal{G}_{n,m}[X]$ are characterized by the property that a map $f:\mathcal{G}_{n,m}[X]\rightarrow \T$ consists of a set map \[ \set{a_1,\dots,a_n, b_1\dots,b_m}\rightarrow\col\T\]  together with a simplicial set map \[ f:X\rightarrow \T(fa_1,\dots,fa_n, fb_1\dots,fb_m).\]\end{remark}

Recall that $\msi$ is the category with one object $x$ and no non-identity morphisms and that we have fixed a specific set of generating (acyclic) cofibrations for $\sSet$ \eqref{E:bdry}, \eqref{E:horni}.

\begin{definition}[Generating cofibrations]\label{generatingcofibrations}The set $I$ of generating cofibrations consists of the following morphisms of simplicial props:
\begin{description}

        \item[C1] Given a generating cofibration $K\hookrightarrow L$ in the model structure on $\sSet$, the induced morphisms of props $\mathcal{G}_{n,m}[K]\longrightarrow \mathcal{G}_{n,m}[L]$ for each $n,m \in\mathbb{N}$.

        \item[C2] The 
        map $\varnothing \hookrightarrow F_0(\msi)$; we will usually write this as just $\varnothing \hookrightarrow \msi$ when no confusion will result.
\end{description}
\end{definition}

As in section~\ref{S:bergnerMS}, we consider simplicial categories $\msh$ with two objects $x$ and $y$, weakly contractible function complexes, and only countably many simplices in each function complex. Furthermore, we require that each such $\msh$ is cofibrant in the Dwyer-Kan model category structure on $\sCat_{\{x,y\}}$ \cite{dk-simploc}. Let $\mathbf{H}$ denote a set of representatives of isomorphism classes of such categories.

\begin{definition}[Generating acyclic cofibrations]\label{generatingacycliccofibrations}The set $J$ of generating acyclic cofibrations consists of the following morphisms of simplicial props:
\begin{description}
        \item[A1] Given a generating acyclic cofibration $K\hookrightarrow L$ of the model structure on $\sSet$, the morphisms $\mathcal{G}_{n,m}[K]\rightarrow \mathcal{G}_{n,m}[L]$ for each $n,m\in\mathbb{N}$.
        \item[A2] 
        The maps $F_0(\msi\hookrightarrow\msh)$ for $\msh \in \mathbf{H}$ which take $x$ to $x$; as in (C2), we will abuse notation and denote this map by $\msi\hookrightarrow\msh$.
\end{description}
        \end{definition}

Note that all of the morphisms in (A1) are weak equivalences in the Johnson-Yau model structure on $\sProp_{\set{a_1,\dots,a_n;b_1,\dots,b_m}}$ and thus are weak equivalences in $\sProp$.

The proof of our main theorem applies Kan's recognition theorem for 
cofibrantly generated model categories (see \cite[11.2.1]{hirschhorn} or \cite[2.1.19]{hovey}). 
We begin by verifying compatibility of Definitions~\ref{generatingcofibrations} and \ref{generatingacycliccofibrations} with our (acyclic) fibrations from \ref{WEandfibrations}.

\subsection{Classification of (acyclic) fibrations}\label{subsection classification fibrations}

As we will show later, the saturation of the set of generating acyclic cofibrations $J$, denoted by $\overline{J},$ is the class of acyclic cofibrations in $\sProp$. Similarly, the saturation of the set of generating cofibrations $I$, 
denoted by $\overline{I},$ is the class of cofibrations in $\sProp$. In the following two lemmas we show that this is consistent with our choice of fibrations and acyclic fibrations. 

\begin{lemma}\label{Classification of Fibrations} A morphism of simplicial props $f:\R\rightarrow \T$ has a the right lifting property with respect to the class $\overline{J}$ if, and only if, $f$ is a fibration of simplicial props. \end{lemma}

\begin{proof}\label{Proof Fibrations}
Let $K\hookrightarrow L$ be a generating acyclic cofibration of $\sSet$. 
Remark~\ref{R:characterization} implies that a morphism of simplicial props $f:\R\rightarrow \T$ 
has the right lifting property (RLP) with respect to $j:\mathcal{G}_{n,m}[K]\hookrightarrow \mathcal{G}_{n,m}[L]$ for some 
$n,m\in\mathbb{N}$ if, and only if, for each input-output profile $c_1,\dots,c_n; d_1,\dots, d_m$ in $\R$, 
the map of simplicial sets $f:\R(\uc;\ud)\longrightarrow \T(f\uc; f\ud)$ has the RLP with respect to $K\hookrightarrow L$. Thus 
$f$ satisfies (F1) if and only if $f$ has the RLP with respect to every map $j$ in (A1).

Assume now that $f$ has the RLP with respect to a prop morphism in the set (A2). To avoid confusion with the simplicial functors $\msi\hookrightarrow\msh$, we will write $F_0 \msi \hookrightarrow F_0 \msh$ when considering maps of simplicial props in (A2) for the remainder of the proof.

If $f$ is a fibration, then $U_0(f)$ is a fibration in $\sCat$, so $U_0(f)$ has the RLP with respect to all of the simplicial functors $\msi \hookrightarrow \msh$ from Proposition~\ref{generatingtrivialcofibrationsbergner}. A commutative square
\[
        \xymatrix{
                F_0\msi \ar@{->}[r] \ar@{->}[d] &
                \R \ar@{->}[d] \\
                F_0\msh \ar@{->}[r] &
                \T
        }
\]
factors as
\[
        \xymatrix{
                F_0\msi \ar@{->}[r] \ar@{->}[d] & F_0 U_0 \R \ar@{->}[r] \ar@{->}[d] &
                \R \ar@{->}[d] \\
                F_0\msh \ar@{.>}[ur] \ar@{->}[r] & F_0 U_0 \T \ar@{->}[r] &
                \T
        }
\]
where the dotted lift exists since $U_0F_0 = \id_{\sCat}$ and $U_0(f)$ has the RLP with respect to $\msi \hookrightarrow \msh$. Thus $f$ has the RLP with respect to maps in (A2).
On the other hand, if $f$ has the RLP with respect to the maps $F_0\msi \to F_0\msh$ in (A2) and we have a commutative square
\[
        \xymatrix{
                \msi \ar@{->}[r] \ar@{->}[d] &
                U_0\R \ar@{->}[d] \\
                \msh \ar@{->}[r] &
                U_0\T,
        }
\]
we know that we have a dotted lift in
\[
        \xymatrix{
                F_0\msi \ar@{->}[r] \ar@{->}[d] & F_0 U_0 \R \ar@{->}[r] \ar@{->}[d] &
                \R \ar@{->}[d] \\
                F_0\msh \ar@{.>}[urr] \ar@{->}[r] & F_0 U_0 \T \ar@{->}[r] &
                \T.
        }
\]
This map factors through $F_0U_0\R$, and applying $U_0$ we have the desired lift $\msh = U_0F_0 \msh \to U_0F_0U_0 \R = U_0\R$. Thus $U_0(f)$ has the RLP with respect to maps of the form $\msi \to \msh.$ If, furthermore, we assume that $f$ has the RLP with respect to maps in (A1), then, in particular, it has the RLP with respect to the maps $\mathcal{G}_{1,1}[K] \to \mathcal{G}_{1,1}[L]$, and so $U_0(f)$ is a fibration in $\sCat$ by Proposition~\ref{generatingtrivialcofibrationsbergner}. Thus $\pi_0(U_0(f)) = \pi_0(f)$ is a fibration of categories, and we already knew that $f$ satisfied (F1), so $f$ is a fibration in $\sProp$.
\end{proof}

\begin{lemma}\label{Classification of Acyclic Fibrations} A morphism $f:\R\rightarrow \T$ has the right lifting property with respect to the class $\overline{I}$ if, and only if, $f$ is an acyclic fibration of simplicial props. \end{lemma} 

\begin{proof} The proof is nearly identical to that of Lemma~\ref{Classification of Fibrations}.\end{proof}

\subsection{Relative \texorpdfstring{$J$}{J}-cell complexes and proof of the main theorem}\label{subsection relative cell complexes}
The subcategory of \emph{relative $J$-cell complexes} consists of those maps which can be constructed as a transfinite composition of pushouts of elements of $J$.

\begin{prop}\label{The Hard Part}Every relative $J$-cell complex is a weak equivalence.\end{prop}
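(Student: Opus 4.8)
The plan is to feed this into Kan's recognition theorem by reducing the statement to the case of a \emph{single} pushout of a generating acyclic cofibration, and then arguing that weak equivalences of the relevant shape are closed under the transfinite compositions used to assemble cell complexes. For the closure statement, observe that a relative $J$-cell complex $\R_0 \to \colim_\beta \R_\beta$ only enlarges the color set through the (A2) attachments, while condition (W1) for the composite concerns profiles already present in $\R_0$. On such a profile the maps $\R_\beta(\underline a; \underline b) \to \R_{\beta+1}(\underline a; \underline b)$ are cofibrations (in particular monomorphisms) of simplicial sets which are weak equivalences by the single-cell case, so their filtered colimit is again a weak equivalence and (W1) holds for the composite. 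For (W2), the functor $\pi_0$ preserves filtered colimits, and each $\pi_0\R_\beta \to \pi_0\R_{\beta+1}$ is an equivalence of categories that is injective on objects; since such acyclic cofibrations in $\Cat$ are closed under transfinite composition, $\pi_0\R_0 \to \pi_0(\colim_\beta \R_\beta)$ is an equivalence. Thus everything rests on the two single-pushout cases coming from Definition~\ref{generatingacycliccofibrations}.

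The case (A1) is comparatively soft. A pushout of $G_{n,m}[K] \to G_{n,m}[L]$ along an attaching map $G_{n,m}[K]\to\R$ with color function $\alpha\colon\{a_1,\dots,a_n,b_1,\dots,b_m\}\to\col\R$ adds no new colors, so the pushout is already computed in $\sProp_{\col\R}$. Transporting the attaching map across the adjunction $(\alpha_!,\alpha^*)$, the relevant cell becomes $\alpha_! G_{n,m}[K]\to\alpha_! G_{n,m}[L]$; since (A1) is an acyclic cofibration in the Johnson--Yau structure of Theorem~\ref{T:jy} and $\alpha_!$ is left Quillen by Proposition~\ref{prop37}, this is an acyclic cofibration in $\sProp_{\col\R}$. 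Its pushout is therefore a weak equivalence in $\sProp_{\col\R}$, i.e.\ condition (W1) holds; as the map is the identity on colors, (W1) implies (W2), and we are done with (A1).

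The genuine difficulty is the case (A2), where we must analyze $\msp := \R\amalg_{F_0\msi}F_0\msh$ and the color set changes. The attaching map $F_0\msi\to\R$ selects a color $c\in\col\R$, and $\msp$ has color set $\col\R\sqcup\{y\}$, the new color $y$ being the image of the second object of $\msh$, glued to $\R$ only through the identification $x=c$, $\id_x=\id_c$. I would first record the splitting coming from the retraction $r\colon\msh\to\msi$ (which exists because the function complexes of $\msh$ are contractible): applying $F_0$ and using the universal property of the pushout yields $q\colon\msp\to\R$ with $q\circ i=\id_\R$, where $i\colon\R\to\msp$ is the canonical inclusion; this exhibits $i$ as a split monomorphism and serves as a useful organizing tool. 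To prove that $i$ is a weak equivalence I would give an explicit description of $\msp$ through the free-prop functor of Theorem~\ref{T:freeprop}, representing operations of $\msp$ by graphs whose vertices are decorated either by operations of $\R$ or by the generating morphisms of $\msh$, modulo the prop relations and the gluing. The essential structural simplification is that $\msh$, being a category, contributes only unary (one-input, one-output) generators, so the new color $y$ occurs exclusively on \emph{internal} edges of any operation whose external profile lies in the old colors; the decorations along such an edge are governed by the category structure of $\msh$ and hence by its contractible mapping spaces.

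With such a description in hand the two conditions follow. For (W1), the space of $\msh$-decorations that can be inserted over a fixed $\R$-operation is contractible (being assembled from the contractible complexes $\msh(x,x),\msh(x,y),\msh(y,x),\msh(y,y)$ together with their identity-containing vertical composites), so a filtration of $\msp$ by the number of $\msh$-decorated vertices exhibits $\R(\underline a;\underline b)\to\msp(\underline a;\underline b)$ as a weak equivalence on every old-color profile. For (W2), the classes of the nonempty contractible complexes $\msh(x,y)$ and $\msh(y,x)$ provide mutually inverse isomorphisms $c\cong y$ in $\pi_0\msp$, so $\pi_0 i$ is essentially surjective; full faithfulness of $\pi_0 i$ is exactly the statement that $\R(c;d)\to\msp(c;d)$ induces a bijection on path components for old colors $c,d$, which is a consequence of (W1). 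The main obstacle throughout is precisely this explicit control of $\msp$ in the varying-color case: unlike for operads, pushouts of props admit no canonical representation in the underlying category, so constructing the filtration and verifying that the inserted $\msh$-decorations assemble into contractible pieces is where the real work lies.
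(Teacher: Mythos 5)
Your proposal has the same skeleton as the paper's proof: reduce to a single pushout of a map in $J$; dispose of (A1) by transporting the attaching map across $(\alpha_!,\alpha^*)$ so that the pushout is computed in the fixed-color category, where the cell is an acyclic cofibration by Proposition~\ref{prop37} (this is exactly the content of Proposition~\ref{P:pushout} and Lemma~\ref{L:newcolorchange}); and, for (A2), prove the levelwise condition (W1) by a filtration counting $\msh$-decorations, then deduce essential surjectivity of $\pi_0$ from composable $a\in\msh(x,y)_0$, $b\in\msh(y,x)_0$ together with contractibility of $\msh(x,x)$ and $\msh(y,y)$ --- again exactly as in the paper. Those parts are correct, and your explicit treatment of transfinite composition is finer than the paper's one-line reduction.

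The gap is that the entire weight of case (A2) rests on the unproved sentence that ``the space of $\msh$-decorations that can be inserted over a fixed $\R$-operation is contractible, \ldots\ so a filtration of $\msp$ by the number of $\msh$-decorated vertices exhibits $\R(\lis{a_i};\lis{b_j})\to\msp(\lis{a_i};\lis{b_j})$ as a weak equivalence.'' That inference is not available as stated: since elements of $\msp$ are equivalence classes of decorated graphs with no canonical representatives, passing from filtration stage $k$ to stage $k+1$ is \emph{not} the operation of gluing a contractible space of decorations onto $\msp^k$; it is a colimit in which one element has many presentations (identities can be absorbed, $\msh$-morphisms slide past $\R$-operations by interchange, and horizontal composition lets $\msh$-decorations sit in several disjoint parts of a graph at once), and a quotient of a levelwise equivalence need not remain an equivalence. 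This is precisely where Proposition~\ref{NAFA} does its real work: (i) using the interchange law and identities it establishes the normal form that every element of $\msp^{k+1}$ is a \emph{vertical} composite of an element of $\msp^k$ with a morphism of $\widetilde{\msh}=\msh\amalg\mathfrak{C}$; (ii) it uses this to present $\msp^{k+1}(\lis{a_i};\lis{d_\ell})$ and $\msp^{k}(\lis{a_i};\lis{d_\ell})$ as colimits of the explicit diagrams \eqref{Done} and \eqref{Dtwo}; (iii) it builds a levelwise weak equivalence between these diagrams by collapsing the contractible function complexes of $\widetilde{\msh}$ onto $\Sigma\times\Sigma$-factors; and (iv) it identifies both colimits with homotopy colimits via cofibrancy, so that the levelwise equivalence descends to $\msp^{k}(\lis{a_i};\lis{d_\ell})\simeq\msp^{k+1}(\lis{a_i};\lis{d_\ell})$. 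Steps (i), (ii) and (iv) --- a normal form, a colimit presentation of the filtration stages, and a colimit-to-homotopy-colimit comparison --- are exactly what converts ``the decorations are contractible'' into a proof, and none of them appears in your outline; indeed you concede at the end that this is where the real work lies, but that work is what is missing.
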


It is enough to show that a pushout of a map in $J$ is a weak equivalence.  
In this section we will analyze the maps (A1) from Definition \ref{generatingacycliccofibrations}. 
Analysis of the maps (A2) is considerably more involved, and will be deferred until sections \ref{section specialization}--\ref{section filtration layers}.

\begin{prop} Let $f:\B \to \R$ be a morphism of props with color map $\alpha$. Then there is a unique map $\tilde f : \alpha_! \B \to \R$ so that $\B \to \R$ factors as $\B \to \alpha_! \B \to \R$. This decomposition is functorial.
\end{prop}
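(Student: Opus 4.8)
Write $\mathfrak C = \col\B$ and $\mathfrak D = \col\R$, so that $\alpha\colon\mathfrak C\to\mathfrak D$ and $\alpha_!\B$ is a prop with color set $\mathfrak D$; here $\tilde f$ will be a homomorphism in $\sProp_{\mathfrak D}$, that is, one preserving colors, while the first factor $\B\to\alpha_!\B$ carries the color data $\alpha$. The plan is to chain two universal properties that are already in hand. First, the canonical map $c_\R\colon\alpha^*\R\to\R$ (with color function $\alpha$) recorded after Proposition~\ref{prop37} has the property that every homomorphism $\B\to\R$ with color function $\alpha$ factors uniquely as a color-preserving map $\B\to\alpha^*\R$ followed by $c_\R$. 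Second, the adjunction $(\alpha_!,\alpha^*)$ of Proposition~\ref{prop37} gives a natural bijection $\Hom_{\sProp_{\mathfrak D}}(\alpha_!\B,\R)\cong\Hom_{\sProp_{\mathfrak C}}(\B,\alpha^*\R)$ sending $h$ to $\alpha^*(h)\circ\eta_\B$, where $\eta$ is the unit.

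Composing these for a general prop $\T$ with $\col\T=\mathfrak D$ produces a bijection
\[
	\Hom_{\sProp_{\mathfrak D}}(\alpha_!\B,\T)\xrightarrow{\ \cong\ }\{\,\text{homomorphisms }\B\to\T\text{ with color function }\alpha\,\},\qquad h\mapsto c_\T\circ\alpha^*(h)\circ\eta_\B.
\]
I would then define the map $u\colon\B\to\alpha_!\B$ to be $c_{\alpha_!\B}\circ\eta_\B$, the image of $\id_{\alpha_!\B}$ under this bijection; it has color function $\alpha$ and is the asserted canonical map. The crux is to recognize the displayed bijection as precomposition by $u$, i.e.\ to verify $c_\T\circ\alpha^*(h)\circ\eta_\B=h\circ u$ for every color-preserving $h\colon\alpha_!\B\to\T$. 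Granting this, precomposition with $u$ is a bijection, so there is a unique color-preserving $\tilde f\colon\alpha_!\B\to\R$ with $\tilde f\circ u=f$; taking the preimage of $f$ yields both existence and uniqueness.

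The identity $h\circ u=c_\T\circ\alpha^*(h)\circ\eta_\B$ reduces to the \emph{naturality of the canonical maps}, namely $h\circ c_{\alpha_!\B}=c_\T\circ\alpha^*(h)$, and this is the step I expect to be the only real obstacle. I would prove it by unwinding definitions on a fixed profile $(\lis{a_i};\lis{b_j})$ in $\mathfrak C$: there $c_\T$ is, by construction, the identity from $\alpha^*\T(\lis{a_i};\lis{b_j})=\T(\lis{\alpha(a_i)};\lis{\alpha(b_j)})$ onto the operations of $\T$ over the profile $(\lis{\alpha(a_i)};\lis{\alpha(b_j)})$, while $\alpha^*(h)$ is simply $h$ read on those same operations; both composites therefore agree with $h$ evaluated at the profile $(\lis{\alpha(a_i)};\lis{\alpha(b_j)})$, and both realize the color function $\alpha$. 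Everything else is formal bookkeeping.

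Finally, functoriality is immediate from naturality: a morphism of factorization data is a commuting square of prop maps $\B\to\B'$ and $\R\to\R'$ lying over compatible color maps, $\alpha_!$ is functorial in this base-change, and $u$ and $\tilde f$ are assembled only from the natural transformations $\eta$ and $c$. I would record this by checking that the two resulting squares commute, which again rests on naturality of $\eta$ and of the canonical maps.
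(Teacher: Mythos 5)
Your proof is correct and is essentially the paper's own argument: both rest on the adjunction $(\alpha_!,\alpha^*)$ together with the unique factorization of an $\alpha$-colored homomorphism through the canonical map $\alpha^*\R\to\R$, and your ``crux'' identity $h\circ c_{\alpha_!\B}=c_\T\circ\alpha^*(h)$ is precisely the commuting square displayed in the paper's proof, where the adjoint $\tilde f^\vee=\alpha^*(\tilde f)\circ\eta_\B$ is declared to be $f$ itself. Your explicit remark that uniqueness is taken among \emph{color-preserving} maps $\alpha_!\B\to\R$ (it fails otherwise, e.g.\ for injective non-surjective $\alpha$, where $\alpha_!\B\cong\B\amalg\mathfrak{C}'$) makes precise what the paper leaves implicit.
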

\begin{proof}
Recall that $\alpha^*\R (\ua; \ub) = \R(\alpha\ua; \alpha\ub)$ and that the map $\B \to \alpha_! \B$ is defined by
\[ \B \to \alpha^* \alpha_! \B \to \alpha_! \B\]
where the first map is the unit of the adjunction.
Write
\[
	\Phi: \Hom(\alpha_! \B, \R) \overset\cong\to \Hom(\B, \alpha^* \R)
\]
for the adjunction isomorphism.
Existence and uniqueness follow from the diagram
\[
        \xymatrix{
                \B \ar@{->}[r]^-{unit} \ar@{->}[dr]_{\Phi(\tilde f)} &\alpha^*\alpha_!\B \ar@{->}[r] \ar@{->}[d]^{\alpha^* \tilde f} &
                \alpha_!\B \ar@{->}[d]^{\tilde f} \\
                & \alpha^*\R \ar@{->}[r] &
                \R
        }
\]
and the fact that  
$\Phi(\tilde{f})$ must be defined by
\[ \begin{tikzcd}
	\,	& \R(\alpha \ua;\alpha \ub) \\
       \B(\ua; \ub) \arrow{ur}{f} \rar[swap]{\Phi(\tilde f)} &  \alpha^*\R(\ua; \ub) \uar[leftrightarrow]{\cong}
\end{tikzcd}\]
since the composite $\B \overset{\Phi(\tilde f)}\longrightarrow \alpha^*\R \to \R$ needs to be $f$. 

\end{proof}

\begin{prop}\label{P:pushout}
Let $j: \A \to \B$ be a morphism in $\sProp_{\mathfrak{C}}$ and $\alpha:\mathfrak{C} \to \mathfrak{D}$ a map of sets. Then
\[
        \xymatrix{
                \A \ar@{->}[r] \ar@{->}[d]^j &
                \alpha_!\A \ar@{->}[d]^{\alpha_!j} \\
                \B \ar@{->}[r] &
                \alpha_!\B
        }
\]
is a pushout in $\sProp$.
\end{prop}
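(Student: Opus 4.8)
The plan is to verify the universal property of the pushout directly, reducing everything to an enhanced universal property of the unit map $\eta_\B\colon \B \to \alpha_!\B$ (I write $\eta_\A\colon \A \to \alpha_!\A$ and $\eta_\B$ for the two horizontal maps; both have color function $\alpha$). First I would record that the square commutes: by naturality of the unit, $\eta_\B \circ j = \alpha_! j \circ \eta_\A$, noting that $j$ and $\alpha_! j$ are color-identities on $\mathfrak{C}$ and $\mathfrak{D}$ respectively. It then remains to show that for every prop $\SSS$ and every pair $g\colon \B \to \SSS$, $h\colon \alpha_!\A \to \SSS$ with $g\circ j = h\circ \eta_\A$, there is a unique $k\colon \alpha_!\B \to \SSS$ with $k\circ \eta_\B = g$ and $k\circ \alpha_! j = h$.

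The key intermediate step, which I would isolate as a separate claim, is a global universal property of $\eta_\B$ in $\sProp$: for any prop $\SSS$ and any color function $\beta\colon \mathfrak{D} \to \col(\SSS)$, precomposition with $\eta_\B$ induces a bijection between maps $\alpha_!\B \to \SSS$ with color function $\beta$ and maps $\B \to \SSS$ with color function $\beta\alpha$. (The preceding proposition is the special case $\beta = \id_{\mathfrak{D}}$.) I would prove this by composing three bijections: the factorization property recorded after Proposition~\ref{prop37} identifies maps $\alpha_!\B \to \SSS$ of color $\beta$ with maps $\alpha_!\B \to \beta^*\SSS$ in $\sProp_{\mathfrak{D}}$; the adjunction $(\alpha_!,\alpha^*)$ identifies these with maps $\B \to \alpha^*\beta^*\SSS$ in $\sProp_{\mathfrak{C}}$; and since $\alpha^*\beta^*\SSS = (\beta\alpha)^*\SSS$, the same factorization property identifies these with maps $\B \to \SSS$ of color $\beta\alpha$. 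One then checks that the composite bijection is genuinely precomposition with $\eta_\B$.

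Granting this claim, the pushout property becomes a short diagram chase. Since $\alpha_! j$ is a color-identity, the equation $k\circ \alpha_! j = h$ forces $\col(k) = \col(h) =: \beta$, and $k\circ \eta_\B = g$ then forces $\col(g) = \beta\alpha$; the hypothesis $g\circ j = h\circ \eta_\A$ indeed yields $\col(g) = \beta\alpha$, so the constraints are consistent. As $g$ has color function $\beta\alpha$, the universal property of $\eta_\B$ produces a unique $k$ of color $\beta$ with $k\circ \eta_\B = g$. To check $k\circ \alpha_! j = h$, observe both sides are maps $\alpha_!\A \to \SSS$ of color $\beta$, so by the universal property of $\eta_\A$ it suffices to compare their precompositions with $\eta_\A$; this follows from $(k\circ \alpha_! j)\circ \eta_\A = k\circ \eta_\B \circ j = g\circ j = h\circ \eta_\A$, using naturality of the unit. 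Uniqueness of $k$ is immediate from the uniqueness clause of the universal property of $\eta_\B$.

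The main obstacle is precisely the intermediate claim. The preceding proposition only controls maps out of $\alpha_!\B$ whose target has color set exactly $\mathfrak{D}$, whereas the pushout taken in the full category $\sProp$ requires understanding maps into an arbitrary prop $\SSS$ with a color function that merely factors through $\alpha$. Bridging this gap is exactly the role of the $\beta^*$-factorization combined with the $(\alpha_!,\alpha^*)$ adjunction; once that universal property is established, the rest is formal. The one point I would verify with care is that the three bijections above really compose to precomposition with $\eta_\B$ rather than some twisted variant, which is where the triangle identities and the naturality of the canonical maps $\alpha^*(-) \to (-)$ are implicitly used.
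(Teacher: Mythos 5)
Your proof is correct, and it reaches the conclusion by a genuinely different organization than the paper, even though both arguments ultimately rest on the same two ingredients: the unique factorization of a color-$\gamma$ homomorphism through the canonical map $\gamma^*(-)\to(-)$ (recorded after Proposition~\ref{prop37}), and the fixed-color adjunction $(\alpha_!,\alpha^*)$. The paper keeps its preceding proposition exactly as stated --- which is your intermediate claim in the special case $\beta=\id_{\mathfrak{D}}$, i.e.\ for targets whose color set is exactly $\mathfrak{D}$ --- and then handles a general cocone of color $\beta$ by a case split: when $\beta\neq\id$ it pushes forward along $\beta$, factoring the candidate map through $\alpha_!\B \to \beta_!\alpha_!\B \cong (\beta\alpha)_!\B$, and closes with a separate uniqueness chase ($q_0=\widetilde{qi}$, hence $q_0 = q$). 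You instead strengthen the lemma once and for all, using restriction along $\beta$ rather than extension along $\beta$: the chain of bijections through $\beta^*\SSS$, the adjunction, and the identity $\alpha^*\beta^*=(\beta\alpha)^*$ yields the relative universal property of $\eta_\B$ for arbitrary $\beta$, after which the pushout verification is a uniform formal chase with no cases. The trade-off: the paper's route keeps the auxiliary statement minimal and reuses it as a black box, at the price of the bootstrap through $(\beta\alpha)_!\B$ and a fiddlier uniqueness argument; your route frontloads the content into one stronger lemma, and as a bonus makes explicit a point the paper leaves implicit in its $\beta=\id$ case, namely that the induced map $k$ also satisfies the second triangle $k\circ\alpha_!j = h$, which you correctly check by precomposing with $\eta_\A$ and invoking uniqueness of factorizations through $\eta_\A$. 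Finally, the verification you flagged --- that the composite of your three bijections is literally precomposition with $\eta_\B$ --- does go through, by naturality of the canonical maps $\gamma^*(-)\to(-)$ together with the fact that these are identities on operation spaces; this is the same computation the paper performs, in its special case, when it identifies the adjoint $\tilde f^{\vee}$ with $f$.
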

\begin{proof}
Suppose we have a diagram
\[
        \xymatrix{
                \A \ar@{->}[r] \ar@{->}[d] &
                \alpha_!\A \ar@{->}[d] \ar@/^/[ddr] \\
                \B \ar@{->}[r] \ar@/_/[drr]_{x}&
                \alpha_! \B \ar@{.>}[dr] \\
                && \R
        }
\]
where $\alpha_!\A \to \R$ has color map $\beta$.
If $\beta=\id_{\mathfrak{D}}$, then the dotted arrow exists and is unique by the previous proposition.

If $\beta$ is not the identity, then $x : \B \to \R$ has color map $\beta\alpha$. Consider the diagram
\[
        \xymatrix{
                \A \ar@{->}[r] \ar@{->}[d]^j &
                \alpha_!\A \ar@{->}[d]^{\alpha_!j} \ar@{->}[r]  &
                \beta_!\alpha_! \A \ar@/^/[ddr] \ar@{->}[d] \\
                \B \ar@{->}[r]^i \ar@/_/[drrr]_{x} &
                \alpha_!\B  \ar@{->}[r]^{i'} \ar@{.>}[drr]|q &
                (\beta\alpha)_! \B  \ar@{->}[dr]^{\tilde x} &  \\
                &&& \R
        }
\]
The arrow $\tilde x : (\beta\alpha)_!\B \to \R$ exists by the previous paragraph, which implies that the dotted arrow $q$ exists as well. If we had some other map, say $q_0$, which made the diagram
\[
        \xymatrix{
                \A \ar@{->}[r] \ar@{->}[d] &
                \alpha_!\A \ar@{->}[d] \ar@/^/[ddr] \\
                \B \ar@{->}[r]^i \ar@/_/[drr]_{x} &
                \alpha_! \B \ar@{.>}[dr]|{q_0} \\
                && \R
        }
\]
commute, then we can factor $q_0$ uniquely as
\[ q_0 : \alpha_! \B \overset{i'}\to \beta_! \alpha_! \B \overset{\tilde q_0}\to \R. \] 
Thus $x = q_0i = \tilde q_0 i'i$.
Of course $x$ factors uniquely as
\[ x : \B \overset{i'i}\to (\beta\alpha)_! \B \overset{\tilde x} \to \R,\]
so we see that $\tilde q_0 = \tilde x$, implying that $q_0=\tilde q_0 i' = \tilde x i' =  q$.

\end{proof}

\begin{lem}\label{L:newcolorchange}
Consider the following pushout square in the category of simplicial props
$$\begin{CD}
\A@>f>> \A'\\
@VVjV        @VVgV\\
\B@>k>> \B'. 
\end{CD}$$ If $j:\A\longrightarrow\B$ is an acyclic cofibration of $\sProp_{\fC}$ and $f:\A\rightarrow\A'$ is a morphism of simplicial props, then $g:\A'\longrightarrow\B'$ is a weak equivalence in $\sProp$.
\end{lem}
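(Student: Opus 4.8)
The plan is to split the given square into two consecutive pushouts, so that all of the color change is absorbed into a single application of Proposition~\ref{P:pushout} and the remaining pushout takes place inside a \emph{fixed}-color category, where Theorem~\ref{T:jy} is available. Write $\mathfrak{D} = \col(\A')$ and let $\alpha : \mathfrak{C} \to \mathfrak{D}$ be the color map of $f$. By the factorization proposition immediately preceding Proposition~\ref{P:pushout}, the map $f$ factors uniquely as
\[
	\A \longrightarrow \alpha_!\A \overset{\tilde f}{\longrightarrow} \A',
\]
where $\A \to \alpha_!\A$ is the canonical unit and $\tilde f$ is a homomorphism in $\sProp_{\mathfrak{D}}$, i.e.\ it is the identity on the color set $\mathfrak{D}$.

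First I would push $j$ out along the unit $\A \to \alpha_!\A$. By Proposition~\ref{P:pushout}, the resulting square
\[
\begin{CD}
\A @>>> \alpha_!\A \\
@VVjV @VV\alpha_!j V \\
\B @>>> \alpha_!\B
\end{CD}
\]
is a pushout in $\sProp$, and the map it produces is $\alpha_!j : \alpha_!\A \to \alpha_!\B$. Since $(\alpha_!,\alpha^*)$ is a Quillen pair by Proposition~\ref{prop37}, the left adjoint $\alpha_!$ carries the acyclic cofibration $j$ of $\sProp_{\mathfrak{C}}$ to an acyclic cofibration $\alpha_!j$ of $\sProp_{\mathfrak{D}}$.

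Next I would push $\alpha_!j$ out along $\tilde f$. Both $\tilde f$ and $\alpha_!j$ are identities on $\mathfrak{D}$, and this second pushout can be computed inside the fiber $\sProp_{\mathfrak{D}}$: the color-set functor $\col : \sProp \to \Set$ preserves colimits, and the underlying span of color sets is $\mathfrak{D} \xleftarrow{\id} \mathfrak{D} \xrightarrow{\id} \mathfrak{D}$ with colimit $\mathfrak{D}$, so the pushout in $\sProp$ does not change colors and agrees with the pushout taken in $\sProp_{\mathfrak{D}}$. By the pasting lemma for pushouts, the outer rectangle, whose top edge is $f = \tilde f \circ (\A \to \alpha_!\A)$ and whose left edge is $j$, is again a pushout; hence its pushout object is identified with the $\B'$ of the statement, and $g$ is identified with the cobase change of $\alpha_!j$ along $\tilde f$ in $\sProp_{\mathfrak{D}}$. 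Since $\sProp_{\mathfrak{D}}$ is a model category (Theorem~\ref{T:jy}) and acyclic cofibrations are stable under cobase change, $g$ is an acyclic cofibration in $\sProp_{\mathfrak{D}}$, so in particular it satisfies (W1). As $g$ is the identity on $\mathfrak{D}$, condition (W1) forces (W2), as recorded after the Main Theorem. Therefore $g$ is a weak equivalence in $\sProp$, as claimed.

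The main obstacle I expect is the justification, in the third step, that the second pushout genuinely lives in the fixed-color category $\sProp_{\mathfrak{D}}$ and coincides with a pushout in the full category $\sProp$. This is precisely the point where the non-canonical behavior of pushouts in $\sProp$ must be controlled, and where the argument departs from the operad and fixed-color settings; Proposition~\ref{P:pushout} is doing the essential work of tracking the colors, and the remainder of the proof is a formal combination of the Quillen pair $(\alpha_!,\alpha^*)$ with the existing model structure of Theorem~\ref{T:jy}.
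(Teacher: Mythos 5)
Your proposal is correct and is essentially the paper's own argument: both factor $f$ through the unit $\A \to \alpha_!\A$, use Proposition~\ref{P:pushout} to make the first square a pushout, invoke the Quillen pair of Proposition~\ref{prop37} so that $\alpha_!j$ is an acyclic cofibration in $\sProp_{\mathfrak{D}}$, and then identify $g$ as a cobase change of $\alpha_!j$ inside $\sProp_{\mathfrak{D}}$, where Theorem~\ref{T:jy} applies. The only (immaterial) difference is the direction of the pasting argument --- the paper cancels the known outer pushout against the left-hand square to deduce the right-hand square is a pushout, while you compose the two pushouts and then identify the outer rectangle with the given square; both hinge on the same observation that a pushout of a diagram with identity color maps may be computed in the fixed-color category.
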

\begin{proof} 

Let $\alpha := \col(f): \mathfrak{C} \to \mathfrak{D} = \col(\A')$.
Since $j:\A \to \B$ is an acyclic cofibration in $\sProp_{\mathfrak{C}},$ $\alpha_!j$ is an acyclic cofibration in $\sProp_{\mathfrak{D}}$ by Proposition~\ref{prop37}. We know that the left hand square in
\[
        \xymatrix{
                \A \ar@{->}[r] \ar@{->}[d]^j &
                \alpha_!\A \ar@{->}[d]^{\alpha_!j} \ar@{->}[r] &
                \A' \ar@{->}[d]^g \\
                \B \ar@{->}[r] &
                \alpha_!\B \ar@{->}[r] &
                \B'
        }
\]
is a pushout by Proposition~\ref{P:pushout} and the full rectangle is a pushout by assumption, so the right rectangle is a pushout in $\sProp$ as well.
This implies that the right hand rectangle is a pushout in $\sProp_{\mathfrak{D}}$, proving that $g$ is an acyclic cofibration in $\sProp_{\mathfrak{D}}$. Since $g$ is a weak equivalence in $\sProp_{\mathfrak{D}}$, $g$ is, in particular, a weak equivalence in $\sProp.$
\end{proof}

The following proposition is quite subtle, and forms the technical heart of this paper.

\begin{prop}\label{NAFA} Let $\T$ be a simplicial prop, $f:\msi\to \T$ a morphism of simplicial props, and let $j:\msi \to \msh$ be a generating acyclic cofibration. Given the pushout diagram
\[ 
        \xymatrix{
                \msi \ar@{->}[r]^{f} \ar@{->}[d]_{j} & \mst \ar@{->}[d]^{g} \\
                \msh \ar@{->}[r] & \msp
        }
\]
the map $g$ satisfies (W1).

\end{prop}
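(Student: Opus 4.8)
The plan is to prove that for every input-output profile $\underline a;\underline b$ with entries in $\col(\mst)$ the map of simplicial sets $\mst(\underline a;\underline b)\to \msp(\underline a;\underline b)$ induced by $g$ is a weak homotopy equivalence (this is condition (W1)). Granting this, the ``in particular'' is immediate: a weak equivalence of simplicial sets induces a bijection on path components, so applying the statement to the one-input/one-output profiles $(c;d)$ shows that $\pi_0(g)\colon\pi_0(\mst)\to\pi_0(\msp)$ is bijective on hom-sets, i.e.\ fully faithful. Everything therefore reduces to an analysis of the operation spaces of the pushout $\msp=\mst\amalg_{\msi}\msh$.

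First I would obtain a concrete model for $\msp$. Its color set is $\col(\mst)\sqcup\{y\}$, with the object $x$ of $\msh$ identified with $c:=f(x)$. Since $\msh$ is cofibrant in $\sCat_{\{x,y\}}$ and its function complexes are weakly contractible, I would present $\msp$ using the free prop functor of Theorem~\ref{T:freeprop}: a point of $\msp(\underline a;\underline b)$ is an equivalence class of a graph whose vertices are decorated either by operations of $\mst$ or by the new one-input/one-output operations coming from $\msh$ (which may carry the new color $y$ on internal edges), taken modulo the prop relations and the relations of $\msh$.

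The technical heart is to filter these operation spaces by the number of $\msh$-decorated vertices occurring in a representative. This is exactly where the infinite symmetric product \eqref{E:isp} and the forgetful map \eqref{E:toisp} enter: recording the unordered multiset of $\msh$-decorations and pulling back the filtration by $SP^k$ produces a well-defined increasing filtration $\mst=\msp_0\subseteq\msp_1\subseteq\cdots$ that is compatible with the $\Sigma\times\Sigma$-actions and the interchange relations. It is precisely the absence of a canonical normal form for operations in a prop that forces this orbit-theoretic bookkeeping, and which distinguishes the argument from the operadic ones of \cite{cm-simpop,robertson1}. I would then identify, on each operation space, the attaching map for $\msp_{k-1}\hookrightarrow\msp_k$ as a pushout of simplicial sets built out of products and coproducts of the function complexes of $\msh$; because those complexes are weakly contractible, each such attachment is a trivial cofibration.

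Finally I would conclude that $\mst(\underline a;\underline b)=\msp_0(\underline a;\underline b)\to \colim_k \msp_k(\underline a;\underline b)=\msp(\underline a;\underline b)$ is a weak equivalence, the filtration being exhaustive since every operation involves only finitely many $\msh$-vertices and a countable composite of trivial cofibrations of simplicial sets being again one. The main obstacle is the middle step: organizing the filtration so that adjoining one more $\msh$-operation really is a contractible extension \emph{after} passing to orbits under the symmetric groups and quotienting by the prop relations. Controlling how operations route through the new color $y$---so that the glued-in pieces are genuinely governed by the contractible complexes of $\msh$, with the correct isotropy---is the delicate combinatorial point, and it is the step with no analogue in the operad setting.
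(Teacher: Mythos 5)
Your overall framework is the same as the paper's: you filter $\mor(\msp)$ by the number of $\msh$-decorations using the surjection from a free prop together with the symmetric-product filtration \eqref{E:isp}, you plan to exploit the weak contractibility of the function complexes of $\msh$ at each filtration stage, and you pass to the colimit; your reduction of the ``in particular'' clause to (W1) applied to one-input/one-output profiles is also exactly what is needed. The problem is that the proof has a genuine gap at precisely the step you yourself flag as ``the main obstacle'': you assert that the inclusion $\msp^{k-1}\hookrightarrow\msp^{k}$ (a cofibration for free, being a monomorphism of simplicial sets) can be exhibited as a pushout of simplicial sets along coproducts of products of the contractible function complexes of $\msh$, and hence is trivial. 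No such cell-attachment description is constructed, and constructing one is the entire difficulty: an element of filtration $k$ has many representatives with different $\msh$-content, the identifications imposed by the prop relations and the $\Sigma\times\Sigma$-orbits make the overlap of the ``new'' part with $\msp^{k-1}$ uncontrollable in a naive pushout, and nothing in your outline brings the contractibility of the $\msh$-complexes to bear on these quotiented operation spaces.

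The paper does not produce a cell attachment either; it resolves the step by two devices absent from your proposal. First, it enlarges the gluing data: with $\mathfrak{C}=\col(\mst)\setminus\{x_*\}$ it replaces $\msh$ by $\widetilde{\msh}=\msh\amalg\mathfrak{C}$, so that the interchange law
\[
	h\circ_h f \;=\; \bigl(h\circ_h\lis{\id_{d_\ell}}\bigr)\circ_v\bigl(\lis{\id_{c_k}}\circ_h f\bigr),
\]
where the $c_k$ are the inputs of $h$ and the $d_\ell$ the outputs of $f$, shows that \emph{every} element of $\msp^{k+1}$ is a \emph{vertical} composite of an element of $\msp^{k}$ with a morphism of $\widetilde{\msh}$. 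This collapses the two-dimensional (horizontal and vertical) bookkeeping --- including how operations route through the new color $y$ --- into purely vertical composition. Second, instead of a pushout description of $\msp^{k}\to\msp^{k+1}$, the paper writes $\msp^{k+1}(\lis{a_i};\lis{d_\ell})$ as the colimit of the diagram \eqref{Done} of vertical-composition maps, writes $\msp^{k}(\lis{a_i};\lis{d_\ell})$ as the colimit of the levelwise weakly equivalent diagram \eqref{Dtwo} in which each $\widetilde{\msh}$-factor is collapsed to the discrete set of its components (products of symmetric groups), and compares the two colimits through homotopy colimits, using cofibrancy to identify $\hocolim$ with $\colim$ on each side. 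Without the interchange reduction and this two-diagram comparison, or some equivalent device, your filtration sets up the problem but does not solve it.
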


We will present a proof of Proposition~\ref{NAFA} beginning in section~\ref{S:model for pushouts} and continuing until the end of the paper.
Here is an alternative suggestion for a method of proof, which we've only been able to work out parts of, and which presents considerable technical difficulties of its own. 
One would like to be able to adapt the proof of \cite[3.4]{bergner} to the prop setting, but a major hurdle is that the category of props is not left proper (see \cite{leftProper}).
On the other hand, we suspect that this category is `relatively left proper' in the sense of \cite{leftProper}, i.e.\ that pushouts of weak equivalences between $\Sigma$-cofibrant props are again weak equivalences.
This is likely enough to modify the proof in \cite{bergner} to the present setting. 

\begin{proof}[Proof of Proposition~\ref{The Hard Part}, assuming Proposition~\ref{NAFA}]
It is enough to show that the pushout of a generating acyclic cofibration is a weak equivalence.
In other words, given $j:\A\rightarrow\mathcal{B}$ which is in either the set (A1) or the set (A2), if
$$\begin{CD}
\A @>f>> \T\\
@VVjV        @VVgV\\
\mathcal{B} @>k>> \mathcal{P}
\end{CD}$$ is a pushout square in $\sProp$, then $g$ is a weak equivalence.
First, let us assume that $j$ is in (A1) and thus is of the form $j: \mathcal{G}_{n,m}[K] \to \mathcal{G}_{n,m}[L]$ where $K\to L$ is a generating acyclic cofibration of $\sSet$ (see \eqref{E:horni}).

Then $j: \mathcal{G}_{n,m}[K] \to \mathcal{G}_{n,m}[L]$ is an acyclic cofibration in $\sProp_{\{a_1, \dots, a_n, b_1, \dots, b_m\}}$ (since fibrations are defined levelwise), so $g$ is a weak equivalence by Lemma~\ref{L:newcolorchange}.

Now we assume that $j$ is an acyclic cofibration from set (A2) and consider the following pushout square: $$\begin{CD}
\msi  @>>f>       \T \\
@VVjV               @VVgV\\
\msh   @>>k>       \msp.
\end{CD}$$
We know that $g$ satisfies (W1) by Proposition~\ref{NAFA}, which implies that $\pi_0(g)$ is a fully faithful functor. Thus, to show that $g$ satisfies (W2), we must show that $\pi_0(g)$ is essentially surjective. But notice that
\[
        \obj \pi_0 \msp = \obj U_0 \msp = \col \msp = (\obj \pi_0 \T) \amalg \set{y},
\]
so we just need to show that $y$ is isomorphic to an object in the image of $\pi_0(g)$; we will show that $y\cong \pi_0(g) x$.
Let $a\in \msh(x,y)_0$ and $b\in \msh(y,x)_0$. Then since $\msh(x,x)$ and $\msh(y,y)$ are weakly contractible, $a\circ b$ is in the same path component as $\id_x$ and $b\circ a$ is in the same path component as $\id_y$.
Hence $k(a\circ b) = k(a) \circ k(b)$ is in the same path component as $k(\id_x) = \id_{\pi_0(g)x}$ and $k(b) \circ k(a)$ is in the same path component as $\id_y$. Hence $y \cong \pi_0(g) x$ in $\pi_0 \msp.$
\end{proof}

We can now prove the existence of the model category structure on the category of all small simplicial props.

\begin{proof}[Proof of Main Theorem, assuming Proposition~\ref{NAFA}]\label{proofofmaintheorem}
To prove the main theorem we will apply Kan's recognition theorem for cofibrantly generated model categories; we will indicate why the conditions of \cite[2.1.19]{hovey} are satisfied.

The category $\Prop$ is bicomplete (see \cite{hackneyrobertson1}), hence so is $\sProp$.
Weak equivalences are closed under retracts and satisfy the ``2-out-3'' property using the corresponding properties from $\sSet$ and $\Cat$.
The domains of $I$ and $J$, namely $\mathcal{G}_{n,m}[\partial \Delta[p]]$, $\mathcal{G}_{n,m}[\Lambda[p,k]]$, $\varnothing$, and $\msi$, are all small \cite[2.1.3]{hovey}.
Thus (1), (2), and (3) hold.
Our classification of fibrations and acyclic cofibrations, Lemmas~\ref{Classification of Fibrations} and \ref{Classification of Acyclic Fibrations}, imply (5) and (6).
Finally, (4) follows directly from Proposition~\ref{The Hard Part} and the classification of fibrations given in Lemma~\ref{Classification of Fibrations}.
\end{proof}

\subsection{\texorpdfstring{$\sProp$}{sProp} is right proper}\label{subsection right proper}

The model structure on $\sCat$ from Theorem~\ref{T:bergner} was right proper, and we now show that the same is true for the model structure on $\sProp$. 
The category $\sProp$ is not left proper, due to a counterexample of Dwyer given in \cite{leftProper}.

\begin{lemma}
Let
\[
        \xymatrix{
                \A \ar@{->}[r]^h \ar@{->}[d]^f &
                \R \ar@{->}[d]^g \\
                \B \ar@{->}[r]^k &
                \T
        }
\]
be a pullback square in $\sProp$. Then, for any input-output profile $\dc$ in $\A$, the diagram
\[
        \xymatrix{
                \A(\uc; \ud) \ar@{->}[r]^h \ar@{->}[d]^f &
                \R(h\uc; h\ud) \ar@{->}[d]^g \\
                \B(f\uc; f\ud) \ar@{->}[r]^k &
                \T(gh\uc; kf\ud)
        }
\]
is a pullback in $\sSet$.
\end{lemma}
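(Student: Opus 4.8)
The plan is to deduce the statement from two facts: that the forgetful functor $U \colon \sProp \to \sMega$ is a right adjoint, hence preserves the given pullback (Theorem~\ref{T:freeprop} and its simplicial analogue), and that pullbacks in $\sMega$ are computed componentwise. First I would verify this latter description: for a cospan $\X \to \mathcal{Z} \leftarrow \Y$ in $\sMega$, the megagraph with object set $X_0 \times_{Z_0} Y_0$ and arrow set $X_1 \times_{Z_1} Y_1$, equipped with the induced source and target maps, satisfies the universal property. The only point requiring care is that the source and target maps are well defined: a matched pair of arrows has source-lists of equal length whose entries pair up to land in $X_0 \times_{Z_0} Y_0$, giving a canonical element of $\M(X_0 \times_{Z_0} Y_0)$. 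Since a map of megagraphs is precisely a compatible pair $(f_0,f_1)$, the universal property is then immediate.

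Applying $U$ to the given square and using that $U$ preserves limits, I obtain that the colours pull back, $\col \A = \col \B \times_{\col \T} \col \R$ (this is the object-set component $X_0$), and that the total arrow spaces pull back, $(U\A)_1 = (U\B)_1 \times_{(U\T)_1} (U\R)_1$ (the arrow component $X_1$). It remains to extract the profile-wise statement from this arrow-level identity. The key observation is that $(U\T)_1 = \coprod_{\underline a, \underline b} \T(\underline a; \underline b)$ is a genuine disjoint union, so in the fibre product an element of $\B(\underline a; \underline b)$ can be matched with an element of $\R(\underline a'; \underline b')$ only when $k$ and $g$ send these two summands into the \emph{same} summand of $(U\T)_1$, i.e.\ only when $k\underline a = g \underline a'$ and $k\underline b = g\underline b'$. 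Consequently the pullback of arrow spaces decomposes as a coproduct, over such matched pairs of profiles, of the fibre products $\B(\underline a; \underline b) \times_{\T(k\underline a; k\underline b)} \R(\underline a'; \underline b')$.

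Because the colours pull back, a matched pair of colour profiles is exactly the data of a profile $(\lis{c_i}; \lis{d_j})$ in $\col \A = \col \B \times_{\col \T} \col \R$, with $f$ and $h$ recovering the two sides. Comparing the resulting coproduct decomposition summand-by-summand with $(U\A)_1 = \coprod \A(\underline c; \underline d)$ then yields the desired isomorphism $\A(\lis{c_i}; \lis{d_j}) \cong \B(\lis{f c_i}; \lis{f d_j}) \times_{\T(\lis{gh c_i}; \lis{kf d_j})} \R(\lis{h c_i}; \lis{h d_j})$ for each profile in $\A$. The main obstacle I anticipate is precisely this last bookkeeping step: making rigorous that pulling the two coproduct decompositions back over the disjoint union $(U\T)_1$ matches summands correctly and never mixes distinct profiles. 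The two ingredients that make it go through are the disjointness of the summands of $(U\T)_1$ and the fact that the colours pull back; the simplicial structure introduces no further difficulty, since pullbacks in $\sSet$ are formed levelwise and are preserved by $U$.
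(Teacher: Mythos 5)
Your proof is correct, but it takes a genuinely different route from the paper's. The paper argues by corepresentability: by Remark~\ref{R:characterization}, a cone of simplicial sets $X \to \R(\lis{hc_i};\lis{hd_j})$, $X \to \B(\lis{fc_i};\lis{fd_j})$ over $\T(\lis{ghc_i};\lis{kfd_j})$ is the same data as a cone of props $G_{n,m}[X] \to \R$, $G_{n,m}[X] \to \B$ over $\T$, so the universal property of the pullback $\A$ in $\sProp$ produces a unique $p: G_{n,m}[X] \to \A$, which unwinds (again by Remark~\ref{R:characterization}) to the required unique map $X \to \A(\lis{c_i};\lis{d_j})$ --- a few lines, entirely inside $\sProp$. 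You instead push the whole square down to $\sMega$ via the right adjoint $U$, compute pullbacks of megagraphs componentwise (your observation that matched arrows have source-lists of equal length pairing entrywise is exactly the needed fact that $\M$ preserves pullbacks, i.e.\ $\M(X_0\times_{Z_0}Y_0)\cong \M X_0 \times_{\M Z_0}\M Y_0$), and then use disjointness of the profile decomposition of $(U\T)_1$ to split the fibre product summand by summand. Two remarks on the trade-off. First, your argument leans on the simplicial analogue of Theorem~\ref{T:freeprop}, which the paper states only for $\Prop \to \Mega$; this is a real (if mild) extra input, though the paper itself uses free simplicial props on simplicial megagraphs in Proposition~\ref{NAFA}, so it is available. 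Second, your route proves strictly more: it shows the entire underlying megagraph of $\A$ pulls back, in particular $\col\A \cong \col\B \times_{\col\T} \col\R$. Interestingly, the paper's own proof tacitly needs this color statement (to know that the induced map $p$ sends the generating colors of $G_{n,m}[X]$ to $c_i$ and $d_j$ themselves, rather than to some other colors with the same images in $\B$ and $\R$); it follows there from $\col(-) \cong \Hom_{\sProp}(\msi,-)$ preserving pullbacks, but your argument makes it explicit rather than leaving it implicit.
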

\begin{proof} A commutative diagram of the form
\[
        \xymatrix{
                X \ar@/^/[rrd] \ar@/_/[ddr] \\ &
                \A(\uc;\ud) \ar@{->}[r]^h \ar@{->}[d]^f &
                \R(h\uc;h\ud) \ar@{->}[d]^g \\ &
                \B(f\uc;f\ud) \ar@{->}[r]^k &
                \T(gh\uc;kf\ud)
        }
\]
gives
\[
        \xymatrix{
                \mathcal{G}_{n,m}[X]  \ar@/^/[rrd] \ar@/_/[ddr] \ar@{.>}[dr]^p\\ &
                \A \ar@{->}[r]^h \ar@{->}[d]^f &
                \R \ar@{->}[d]^g \\ &
                \B \ar@{->}[r]^k &
                \T
        }
\]
and so $p$ induces the necessary map $X\to \A(\uc;\ud)$.
\end{proof}

\begin{prop}
The model category structure on $\sProp$ is right proper. In other words, every pullback of a weak equivalence along a fibration is a weak equivalence.
\end{prop}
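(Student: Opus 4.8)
The plan is to verify the two defining conditions (W1) and (W2) for the map being pulled back, treating each by transporting the square into a setting where right properness is already available. Concretely, suppose we are given a pullback square
\[
	\xymatrix{
		\A \ar@{->}[r]^h \ar@{->}[d]^f &
		\R \ar@{->}[d]^g \\
		\B \ar@{->}[r]^k &
		\T
	}
\]
in $\sProp$ in which $g$ is a fibration and $k$ is a weak equivalence; I must show that $h$ is a weak equivalence.

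For (W1) I would invoke the preceding lemma, which presents the square of morphism spaces attached to any input-output profile $\lis{c_i}; \lis{d_j}$ of $\A$ as a pullback in $\sSet$. In that square the right-hand map, induced by $g$, is a Kan fibration (this is the content of (F1) for $g$), while the bottom map, induced by $k$, is a weak homotopy equivalence (this is (W1) for $k$, applied to the profile $\lis{fc_i}; \lis{fd_j}$ of $\B$). Since $\sSet$ is right proper, the pullback of this weak equivalence along this fibration --- which is exactly the map $\A(\lis{c_i}; \lis{d_j}) \to \R(\lis{hc_i}; \lis{hd_j})$ induced by $h$ --- is again a weak homotopy equivalence. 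As the profile was arbitrary, $h$ satisfies (W1).

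For (W2) the essential point, and the one I expect to be the main obstacle, is that $\pi_0$ does \emph{not} commute with pullbacks, so one cannot simply compute $\pi_0 h$ as a pullback of the component categories. The remedy is to pass instead through the limit-preserving functor $U_0\colon \sProp \to \sCat$: because $U_0$ is a right adjoint (with left adjoint $F_0$) it preserves pullbacks, so applying $U_0$ produces a pullback square in $\sCat$. In this square $U_0 g$ is a Bergner fibration --- its hom-space condition is the one-input-one-output instance of (F1) for $g$, and its $\pi_0$-condition is precisely (F2) for $g$ --- and, by the same reasoning, $U_0 k$ is a Bergner weak equivalence. Since the Bergner model structure on $\sCat$ is right proper (Theorem~\ref{T:bergner}), $U_0 h$ is a weak equivalence in $\sCat$; its condition (W2) asserts that $\pi_0 h = \pi_0(U_0 h)$ is an equivalence of categories, which is exactly (W2) for $h$. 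Combining this with the previous paragraph shows $h$ is a weak equivalence, so the model structure is right proper.
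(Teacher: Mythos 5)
Your proof is correct and follows essentially the same route as the paper: (W1) via the preceding hom-space pullback lemma together with right properness of $\sSet$, and (W2) by applying the pullback-preserving right adjoint $U_0$ and invoking right properness of the Bergner structure on $\sCat$. The only difference is cosmetic --- you take $g$ to be the fibration and $k$ the weak equivalence where the paper does the reverse, which is immaterial by the symmetry of the pullback square.
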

\begin{proof}
Let
\[
        \xymatrix{
                \A \ar@{->}[r]^h \ar@{->}[d]^f &
                \R \ar@{->}[d]^g \\
                \B \ar@{->}[r]^k &
                \T
        }
\]
be a pullback square in $\sProp$ with $g$ a weak equivalence and $k$ a fibration.

        Since $U_0$ is a right adjoint it preserves pullbacks, and, moreover, is a right Quillen functor so preserves fibrations as well. By inspection $U_0$ preserves weak equivalences. Thus $U_0(f)$ a weak equivalence in the model structure on $\sCat$, so $f$ satisfies (W2).
        The previous lemma and the fact that  $\sSet$ is right proper \cite[13.1.13]{hirschhorn} imply that $f$ satisfies (W1).
\end{proof}

\section{A model for the pushout in simplicial props}\label{S:model for pushouts}

Recall, from Theorem \ref{sgc is prop}, the $\Cat$-enriched $\fC$-colored prop $\sgc = \{ \sgc\dc \}$.
The objects of the category $\sgc\dc$ are $\fC$-colored graphs and morphisms are defined by graph substitution
In this section we introduce a variant of of $\sgc\dc$ to obtain a prop enriched in $\Cat$ which models pushouts of props.
Suppose that we want to model a pushout 
\[
\begin{tikzcd} \properadG \dar{m_w} \rar{m_b} & \properadB \\
\properadW 
\end{tikzcd}
\]
where $\mathcal{W}$, $\mathcal{G}$ and $\mathcal{B}$ are simplicial props.  We know that the pushout simplicial prop will have color set $\fC = \col (\properadW) \amalg_{\col (\properadG)} \col (\properadB)$.  
The operations, however, are complicated; each operation is represented by a $\fC$-colored graph where each vertex is labelled by an operation from either $\mathcal W$, $\mathcal G$, or $\mathcal B$.
To help visualize these operations, imagine that said vertices are colored by either white, grey, or black. 
We now introduce a $\Cat$-enriched $\fC$-colored prop $\blood_{\fC}$ which has $\fC$-colored graphs with $\{g,w,b\}$-labelled vertices as objects and morphisms defined by graph substitution.

\begin{definition}
Let $\fC$ be a set and let $\dch$ be some fixed input-output profile. Define a category $\blood\dc$ as follows.
\begin{itemize}
\item The objects of $\blood\dc$ are graphs
$G\in \sgc\dc$ together with a partition function $\partition : \vertex(G) \to \{g,w,b \}$, coloring each vertex either grey, white, or black.

\item A \emph{gsd map} $(G,\partition) \to (J,\partition')$ is a graph substitution decomposition
$G=J\{ K_v \}_{v\in \vertex(J)}$ of $G$ so that $\partition(v') = \partition'(v)$ for every $v' \in \vertex(K_v)$.

\item Given such a gsd map, there is a map of sets
\[
	f_{\vertex}: \vertex(G) \to \vertex(J)
\]
which takes every vertex in $K_v$ to $v$.

\item A \emph{transfusion} $q: \partition \to \partition'$ of partitions of $G$ turns some grey vertices into either white vertices or black vertices. There is at most one transfusion between any two partitions.
The requirement to have a transfusion is that if $\partition(v) \in \{w,b\}$, then $\partition'(v) = \partition(v)$.

\item Morphisms of $\blood\dc$ consist of a transfusion followed by a gsd map.
We write such morphisms as $(f,q)$ where $q: \partition' \to \partition$ is a transfusion and $f$ is a gsd map on $(G,\partition)$; if $f$ or $q$ is an identity, we will write $(f,q)$ as just $q$ or $f$, respectively.
\end{itemize}
\end{definition}

Let us now define the composition in $\blood\dc$.
Given two morphisms of $\blood\dc$
	\begin{gather*}
		(G,\partition_1) \overset{q'}\to (G,\partition_2) \overset{f}\to (J,\partition_2') \\
		(J,\partition_2') \overset{q}\to (J,\partition_3) \overset{g}\to (K,\partition_3'),
	\end{gather*}
where $q',q$ are transfusions and $f,g$ are gsd maps,
	the composition in $\blood\dc$ 
	is defined by
	considering the composite
	\begin{equation}\label{fstardef}
		f^* \partition_3 \colon \vertex(G) \overset{f_{\vertex}}\to \vertex(J) \overset{\partition_3}\to \{ g, w, b\},
	\end{equation}
	filling in the diagram (which we do momentarily)
	\begin{equation}\label{composition def} \begin{tikzcd}
	(G,\partition_1) \rar{q'} & 
	(G,\partition_2) \dar{f} \rar[color=red]{f^*q} &
	\color{red} (G,f^*\partition_3) \dar[color=red]{f} \\ &
	(J,\partition_2') \rar{q} & (J,\partition_3) \dar{g} \\
	&& (K,\partition_3')
	\end{tikzcd} \end{equation}
	and setting $(g,q)\circ (f,q') = (g{\color{red}f}, {\color{red}(f^*q)}q')$.
	To see that there is a transfusion $f^*q \colon \partition_2 \to f^* \partition_3$, note that if $\partition_2(v) \in \{w,b\}$, then $\partition'_2 f_{\vertex}(v) = \partition_2(v) \in \{ w, b\}$, hence we have the middle equality in
	\[
		\left(f^*\partition_3\right)(v) = \partition_3 f_{\vertex} (v) = \partition'_2 f_{\vertex}(v) = \partition_2 (v).
	\]

\begin{proposition}\label{Biscategory}
	With the composition given above, $\blood\dc$ is a category.
\end{proposition}

\begin{proof} 

We will show that composition is associative, i.e.\ that $(f_3,q_3) \circ (f_2,q_2) \circ (f_1,q_1)$ doesn't depend on choice of parenthesization.
In the diagram
\[\begin{tikzcd}
	\color{orange}(G,\partition_0) \rar[color=orange]{q_1} & (G,\partition_1) \dar{f_1} \\
	& (J, \partition_1') \rar{q_2} & (J, \partition_2) \dar{f_2} \\
	&& (K,\partition_2') \rar{q_3} & (K,\partition_3) \dar[color=orange]{f_3} \\
	&&&\color{orange}(L,\partition_3')
\end{tikzcd}
\]
only the black portion is relevant for this task.
Unraveling, we have 
\[\begin{tikzcd}
	(G,\partition_1) \dar{f_1} \rar[color=red]{f_1^*q_2} & \color{red} (G,f_1^*\partition_2) \dar[color=red]{f_1} \rar[color=purple,dashed]{(f_2f_1)^*q_3} & \color{purple} (G,(f_2f_1)^*\partition_3) \arrow[color=purple, dashed]{dd}{f_2f_1}\\
	(J, \partition_1') \rar{q_2} & (J, \partition_2) \dar{f_2} \\
	& (K,\partition_2') \rar{q_3} & (K,\partition_3) 
\end{tikzcd}\]
and
\[\begin{tikzcd}
	(G,\partition_1) \dar{f_1} \arrow[color=purple,dashed]{rr}{f_1^*(f_2^*q_3 \circ q_2)} && \color{purple} (G,f_1^*(f_2^*\partition_3)) \dar[color=purple,dashed]{f_1}\\
	(J, \partition_1') \rar{q_2} & (J, \partition_2) \dar{f_2} \rar[color=red]{f_2^*q_3} & \color{red} (J,f_2^*\partition_3) \dar[color=red]{f_2} \\
	& (K,\partition_2') \rar{q_3} & (K,\partition_3) 
\end{tikzcd}\]
defining the two ways to compose.
But there is at most one transfusion between two partitions, hence we must just show that
\[
	(f_2f_1)^*\partition_3 = f_1^*(f_2^*\partition_3).
\]
This follows immediately from the definition \eqref{fstardef}. 

\end{proof}

\begin{theorem}\label{thm_bloodprop} If we allow the input-output profiles $\dch$ to vary, graph substitution produces a $\fC$-colored prop $\blood=\blood_{\fC}$ enriched in $\cat$.
\end{theorem}

\begin{proof} This is a variation on Proposition \ref{sgc is prop}.
If $J$ is a $\fC$-colored graph and we have a collection of objects $(K_v, \partition_v) \in \blood\profilev$ ranging over the vertices of $J$, then the graph substitution
	\[
		G = J\{ K_v \}
	\]
	is an object of $\blood(\inp(J);\out(J))$ 
	by considering the partition
	\[ 
	\left(\coprod_{\vertex(J)} \partition_v \right) : \vertex(G) = \coprod_{\vertex(J)} \vertex(K_v) \to \{g,w,b\}. \] 
\end{proof}

Suppose we are given a pushout diagram in $\sProp$ that we wish to compute. Then we define decoration functors $D_{\dch}:\blood\dc\rightarrow\sset$ 
so that $\{ \colim D_{\dch} \}$ will be the underlying megagraph of the pushout.

\begin{definition} 
Suppose we have two prop morphisms
\[ \properadW \overset{m_w}\leftarrow \properadG \overset{m_b}\rightarrow \properadB, \]
and set $\fC = \col (\properadW) \amalg_{\col (\properadG)} \col (\properadB)$.
For a given input-output profile $\dch$ we describe a \emph{decoration functor}
$D_{\dch}: \blood\dc \to \sset$ as follows. 

\begin{enumerate} 

\item On objects of $\blood\dc$ we define $D_{\dch}(G,\partition)$ to be 
\[
	\left[\prod_{\partition(v) = w} \properadW\profilev \right] \times \left[\prod_{\partition(v) = b} \properadB\profilev \right]  \times \left[\prod_{\partition(v) = g} \properadG\profilev \right]. 
\]

\item On transfusions $\partition \to \partition'$, we define $D_{\dch}(G,\partition) \to D_{\dch}(G,\partition')$ by applying $m_w$ and $m_b$ to the $\properadG$-decorations of the grey vertices of $G$
\begin{align*}&\left[\prod_{\partition(v) = g} \properadG\profilev \right] \\
\cong& \left[
	\prod_{\partition'(v)= g} \properadG\profilev \right] \times \left[
	\prod_{\substack{\partition(v) = g \\ \partition'(v)=w}} \properadG\profilev \right] \times \left[
	\prod_{\substack{\partition(v) = g \\ \partition'(v)=b}} \properadG\profilev \right]  \\
\overset{\id \times m_w \times m_b}\to& \left[
	\prod_{\partition'(v)= g} \properadG\profilev \right] \times \left[
	\prod_{\substack{\partition(v) = g \\ \partition'(v)=w}} \properadW\profilev \right] \times \left[
	\prod_{\substack{\partition(v) = g \\ \partition'(v)=b}} \properadB\profilev \right] 
\end{align*}
and then rearranging to get a map
\begin{multline*}\left[\prod_{\partition(v) = w} \properadW\profilev \right] \times \left[\prod_{\partition(v) = b} \properadB\profilev \right]  \times \left[\prod_{\partition(v) = g} \properadG\profilev \right] \\
\to \left[\prod_{\partition'(v) = w} \properadW\profilev \right] \times \left[\prod_{\partition'(v) = b} \properadB\profilev \right]  \times \left[\prod_{\partition'(v)=g} \properadG\profilev \right].
\end{multline*}
\item 
On gsd maps $f: (G,\partition) \to (J,\partition')$
with $G=J\{ K_v \}_{v\in \vertex(J)}$,
\[ D_{\dch}f: D_{\dch}(G,\partition) \to D_{\dch}(J,\partition') \]
is defined by 
\begin{multline*}
\left[\prod_{\partition(v') = w} \properadW( \inp(v') ; \out(v') ) \right] \times \left[\prod_{\partition(v') = b} \properadB( \inp(v') ; \out(v') ) \right]  \times \left[\prod_{\partition(v') = g} \properadG( \inp(v') ; \out(v') ) \right] \\
\overset{\cong}\to \left[\prod_{\partition'(v) = w} \properadW[K_v] \right] \times \left[\prod_{\partition'(v) = b} \properadB[K_v] \right]  \times \left[\prod_{\partition'(v) = g} \properadG[K_v] \right] \\
\overset{\gamma^{\properadW} \times \gamma^{\properadB} \times \gamma^{\properadG}}\to 
\left[\prod_{\partition'(v) = w} \properadW( \inp(K_v) ; \out(K_v) ) \right] \times \left[\prod_{\partition'(v) = b} \properadB( \inp(K_v) ; \out(K_v) ) \right]  \times \left[\prod_{\partition'(v) = g} \properadG( \inp(K_v) ; \out(K_v) ) \right]
\end{multline*}
where 
\[
	\gamma^{\properadW} : \properadW[K] \cong \prod_{v\in K} \properadW \profilev \to \properadW ( \inp(K) ; \out(K) )
\]
is the propic composition in $\properadW$, and similarly for $\gamma^{\properadB}$ and $\gamma^{\properadG}$.
\end{enumerate} 
\end{definition} 

\begin{proposition}
	$D_{\dch}:\blood\dc\rightarrow\sset$ is a functor.
\end{proposition}
\begin{proof}
	Notice that $D_{\dch}$ respects compositions when restricted to the subcategory containing only transfusions.
	Furthermore, when restricted to the subcategory containing only gsd maps, $D_{\dch}$ again respects compositions because $\properadW$, $\properadB$, $\properadG$ are props.
	To finish the proof, we must show that 
	\[\begin{tikzcd}
	D(G,\partition) \dar{D(f)} \rar{D(f^*q)} &  D(G,f^*\partition')\dar{D(f)}\\
	D(J,\partition'') \rar{D(q)} & D(J,\partition')
	\end{tikzcd}
	\]
	commutes, which follows from the fact that $m_w$ and $m_b$ are maps of props.
\end{proof}

\subsection{Colimits and Reflexive Coequalizers} In order to show that $\blood$ models the pushout \[
\begin{tikzcd} \properadG \dar{m_w} \rar{m_b} & \properadB \\
\properadW
\end{tikzcd}
\] we will make use of the following basic facts about reflexive coequalizers. Recall that a \emph{reflexive coequalizer} is a functor out of $\sk_1 \Delta^\op$.

\newsavebox\rcoex
\begin{lrbox}{\rcoex}
\begin{tikzcd}
X_0 \arrow{r}[description]{s_0} & X_1 
	\lar[to path={
		([yshift=1ex]\tikztostart.west)--([yshift=1ex]\tikztotarget.east) \tikztonodes
	}, swap]{d_0}
	\lar[to path={
		([yshift=-1ex]\tikztostart.west)--([yshift=-1ex]\tikztotarget.east) \tikztonodes
	}]{d_1}
\end{tikzcd}
\end{lrbox}

\begin{lemma}\label{reflexive_coequalizer}
Let $X_0, X_1$ be two props and $U:\Prop \to \Mega$ be the forgetful functor.
If
\[ \usebox\rcoex \]
is a reflexive coequalizer (i.e.\ $d_0s_0 = d_1s_0 = \id_{X_0}$) then
\[
	U\colim \left( \usebox\rcoex \right) = \colim \left( 
\begin{tikzcd}
UX_0 \arrow{r}[description]{Us_0} & UX_1 
	\lar[to path={
		([yshift=1ex]\tikztostart.west)--([yshift=1ex]\tikztotarget.east) \tikztonodes
	}, swap]{Ud_0}
	\lar[to path={
		([yshift=-1ex]\tikztostart.west)--([yshift=-1ex]\tikztotarget.east) \tikztonodes
	}]{Ud_1}
\end{tikzcd}
	\right).
\]
\end{lemma}

\begin{proof}
This follows from \cite[4.3]{RezkSA}.
\end{proof}

\begin{remark} 
We will apply Lemma~\ref{reflexive_coequalizer} to the following case. 
	If $\msp$ is an prop, we then have $\epsilon_{\msp}: \bot \msp \to \msp$ which is the adjoint of the identity $U\msp \to U\msp$.
	Furthermore, for any prop $\msp$ we have a map $\sigma_{\msp}$
	\begin{align*}
		\bot \msp &= F\id_{\Mega}U\msp \\
		&\overset\eta\to F\top U \msp \\
		&= FUFU\msp = \bot^2 \msp
	\end{align*}
	where $\eta: \id_{\Mega} \Rightarrow \top$ is the unit for the monad $\top$. 
	We then have a reflexive coequalizer diagram with $X_0 = \bot \msp$, $X_1 = \bot^2 \msp$, $d_0 =  \epsilon_{\bot \msp}$, $d_1 = \bot \epsilon_{\msp}$, and $s_0 = \sigma_{\msp}$.
	The colimit of this reflexive coequalizer is just $\msp$.

\end{remark}

\begin{lemma}\label{colim interchange lemma}
Let $m_w: \properadG \to \properadW$ and $m_b: \properadG \to \properadB$ be prop maps. Then 
\[ \colim \left( 
	\begin{tikzcd}
\bot\properadB \amalg_{\bot\properadG} \bot\properadW \arrow{r}[description]{s_0} & \bot^2\properadB \amalg_{\bot^2\properadG} \bot^2\properadW
	\lar[to path={
		([yshift=1ex]\tikztostart.west)--([yshift=1ex]\tikztotarget.east) \tikztonodes
	}, swap]{d_0}
	\lar[to path={
		([yshift=-1ex]\tikztostart.west)--([yshift=-1ex]\tikztotarget.east) \tikztonodes
	}]{d_1}
\end{tikzcd} \right) = \properadB \amalg_{\properadG} \properadW. \]
\end{lemma}

\begin{proof}
	Interchange of colimits on the product category \[ (\sk_1 \Delta)^\op \times (\bullet \leftarrow \bullet \rightarrow \bullet).\]
\end{proof}

The proof of the following proposition is elementary.
\begin{proposition}\label{proposition on pushouts}
	Suppose that we have a pushout diagram of sets
	\[ \begin{tikzcd}
	A \rar{e} \arrow[hook]{d}{f} & B \dar{g} \\
	C \rar{h} & D
	\end{tikzcd} \]
	with $f$ a monomorphism.
	Then $g$ is also a monomorphism and
	\[
		D = g(B) \amalg h(C \setminus f(A)) \cong B \amalg (C \setminus A).
	\]
	If $x,y \in C$ have $h(x) = h(y)$, then there are (unique) $\tilde x, \tilde y \in A$ with $x=f(\tilde x)$, $y = f (\tilde y)$ and $e(\tilde x) = e(\tilde y)$. \qed
\end{proposition}

For the purposes of the next lemma, fix a graph $G\in \sgc\dc$, let $\transfusions_G \subset \blood \dc$ be the subcategory whose objects are $(G,\partition)$ where $\partition$ ranges over all partitions of $\vertex(G)$ and whose morphisms are precisely the transfusions.

\begin{lemma}\label{alternate description for free guy}
	Suppose that $m_w: \properadG \to \properadW$ and
		$m_b: \properadG \to \properadB$
	are morphisms of props, that $m_w$ is a monomorphism of megagraphs, and that $D_{\dch}$ is the associated functor on $\blood\dc$. Let 
	\[
	\X = U\properadB \amalg_{U\properadG} U\properadW
	\]
	be the pushout in $\Mega$.
	Fix a graph $G\in \sgc\dc$ and consider the restriction of $D_{\dch}$ to the subcategory $\transfusions_G$.
	Then
	\[
		\colim\limits_{\transfusions_G} D_{\dch} \cong \X[G].
	\]
\end{lemma}
\begin{proof}
	For any partition $\partition$ of $G$, there is an obvious map $\alpha_\partition$ from $D_{\dch} (G,\partition)$ 
	
	to 
	\begin{align*}
	\X[G] &= 
	\prod_{v\in \vertex(G)} \X\profilev \\ &= 
	\prod_{v\in \vertex(G)} \properadB\profilev \uamalg{\properadG \profilev} \properadW \profilev. \end{align*}
	If $\partition \to \partition'$ is a transfusion, then the diagram
	\[ \begin{tikzcd}
	D_{\dch} (G,\partition) \arrow{dd} \arrow{dr}{\alpha_\partition} \\ & \X[G] \\ D_{\dch} (G,\partition') \arrow{ur}{\alpha_{\partition'}}
	\end{tikzcd} \]
	commutes, hence there exists a map
	\[
		\alpha: \colim\limits_{\transfusions_G} D_{\dch} \to \X[G].
	\]

	To see that $\alpha$ is a surjection, let $x \in \X[G]$, and, for each vertex $v\in \vertex(G)$, pick $y_v$ above $\pi_v(x)$
	\[ \begin{tikzcd}
	y_v \arrow[maps to]{d} & \in & \properadB\profilev \amalg \properadW \profilev \arrow[two heads]{d} \\
	\pi_v(x) & \in & \properadB\profilev \uamalg{\properadG \profilev} \properadW \profilev;
	\end{tikzcd} \]
	this choice of $y_v$ gives a partition of $G$ with
	\[
		\partition(v) = \begin{cases}
			w & y_v \in \properadW \profilev \\
			b & y_v \in \properadB \profilev
		\end{cases}
	\]
	By construction, the element $y = \smallprod{y_v} \in D(G,\partition)$ satisfies $\alpha_\partition(y) = x$. Hence $\alpha$ is a surjection.

	Injectivity is more subtle.
	Suppose that $x = \smallprod{x_v} \in D(G,\partition^x)_n$, $y = \smallprod{y_v} \in D(G,\partition^y)_n$, and $\alpha_{\partition^x}(x) = \alpha_{\partition^y}(y)$.
	We must show that $x$ and $y$ represent the same element in $\colim\limits_{\transfusions_G} D_{\dch}$.

	Fix a vertex $v$ and 
	write 
	\[
		A^b = \properadB\profilev \qquad A^g = \properadG\profilev \qquad A^w = \properadW\profilev
	\]
	for the relevant simplicial sets; now we have
	\[
		x_v \in A^{\partition^x(v)} \qquad \text{ and } \qquad y_v \in A^{\partition^y(v)}. 
	\]

	We will write
	\begin{align*}
		\pi_v^{\X}: \X[G] \to \X\profilev &= \properadB\profilev \uamalg{\properadG \profilev} \properadW \profilev \\
		&= A^b \amalg_{A^g} A^w;
	\end{align*}
	universally writing $m$ for the map $A^\square \to A^b \amalg_{A^g} A^w$ we have $\pi_v^{\X} \alpha_{\partition^x}(x) = mx_v$.
	The equality $\alpha_{\partition^x}(x) = \alpha_{\partition^y}(y)$ is equivalent to
	\[
		\pi_v^{\X} \alpha_{\partition^x}(x) = \pi_v^{\X} \alpha_{\partition^y}(y)
	\]
	for all $v \in \vertex(G)$.

	Based on $x$ and $y$, we create an interpolating partition $\partition^g$ equipped with transfusions $\partition^g \to \partition^x$ and $\partition^g \to \partition^y$. We simultaneously construct elements $x'=\smallprod{x_v'} , y'=\smallprod{y_v'} \in D(G,\partition^g)$ satisfying $mx_v' = my_v'$. Finally, we construct a fourth partition $\partition^b$ which admits a black transfusion $\partition^g \to \partition^b$, so that we have the following schematic.
\[ \begin{tikzcd}
&&& z \arrow[squiggly]{dl}{\in} \\
x \dar[squiggly]{\in} & x' \dar[squiggly]{\in} \lar[maps to] \arrow[maps to, bend left]{urr} & D(G,\partition^b) \\
D(G,\partition^x) & D(G,\partition^g) \lar \dar \arrow{ur} & y' \dar[maps to] \arrow[maps to, bend right]{uur} \lar[squiggly]{\in} \\
 & D(G,\partition^y) &y \lar[squiggly]{\in}
\end{tikzcd} \]

	Fix a vertex $v$; we will specify a color for $v$ in the partitions $\partition^g, \partition^b$ and also define the components $x_v'$, $y_v'$.
\begin{itemize}
	\item Suppose $\partition^x(v) = \partition^y(v)$ and $x_v = y_v$.  Then set 
	$\partition^b (v) = \partition^g(v) = \partition^x(v)$ and $x_v' = y_v' = x_v$. 
	\item Suppose $\partition^x(v) = \partition^y(v)$ and $x_v \neq y_v$. 
	Set $\partition^g(v) = g$ and $\partition^b(v) = b$.
\begin{itemize}
	\item We have $\partition^x(v) \neq b$ since \[ A^b \hookrightarrow A^b \amalg_{A^g} A^w \]
	is a monomorphism (Proposition~\ref{proposition on pushouts}) and $mx_v = \pi_v^{\X}(\alpha_{\partition^x} x) = \pi_v^{\X}(\alpha_{\partition^y}y) = my_v$.
	\item If $\partition^x(v) = \partition^y(v) = w$, then use Proposition~\ref{proposition on pushouts} to find elements $x'_v, y'_v \in A^g$ which map to $x_v, y_v \in A^w$. Notice that $m_bx_v' = m_by_v'$.
	\item If $\partition^x(v) = \partition^y(v) = g$, then set $x'_v = x_v$, $y'_v = y_v \in A^g$. Notice that these elements have a common image in $A^b$.
\end{itemize}
	\item If $\partition^x(v) \neq \partition^y(v)$, then set $\partition^g(v) = g$ and $\partition^b(v) = b$. There are six cases, but they are symmetric; reverse the roles of $x_v$ and $y_v$ when necessary.
	\begin{itemize}
		\item If $\partition^x(v) = g$ and $\partition^y(v) = b$, then since $mx_v = my_v$ we actually have $m_bx_v = y_v$.
		Set $x'_v = y'_v = x_v$. 
		\item If $\partition^x(v) = g$ and $\partition^y(v) = w$, then set $x_v'= x_v$. We have $m_wx_v = my_v$, so there is an element $y_v' \in A^g$ with $m_wy_v' = y_v$, and $m_bx_v' = m_by_v'$. 
		\item If $\partition^x(v) = w$ and $\partition^y(v) = b$, then $x_v = m_w(x_v')$ for some $x_v'$, and, furthermore, $m_b(x_v') = y_v$. Set $y_v' = x_v'$.
	\end{itemize}
\end{itemize}
By construction, $x'$ and $y'$ map to the same element of $D(G,\partition^b)$, hence $x$ and $y$ represent the same element of 
$\colim\limits_{\transfusions_G} D_{\dch}$.

\end{proof}

\begin{theorem}\label{pushout vs colim}
	Suppose that $m_w: \properadG \to \properadW$ and
		$m_b: \properadG \to \properadB$
	are morphisms of props, that $m_w$ is a monomorphism of megagraphs, and that $D_{\dch}$ is the associated functor on $\blood\dc$. 
	Consider the pushout
	\[ \begin{tikzcd}
	\properadG \rar{m_b} \dar{m_w} & \properadB \dar \\  \properadW \rar & \properadB \amalg_{\properadG} \properadW.
	\end{tikzcd} \]
	Then 
	\[
		\left(\properadB \amalg_{\properadG} \properadW\right)\dc \cong \colim\limits_{\blood\dc} D_{\dch}.
	\]
\end{theorem}

\begin{proof}
	By Lemmas \ref{reflexive_coequalizer} and \ref{colim interchange lemma}, it is enough to compute
\[ 	\colim \left( 
	\begin{tikzcd}
U\left[ \bot\properadB \amalg_{\bot\properadG} \bot\properadW \right] \arrow{r}[description]{Us_0} & U\left[\bot^2\properadB \amalg_{\bot^2\properadG} \bot^2\properadW \right]
	\lar[to path={
		([yshift=1ex]\tikztostart.west)--([yshift=1ex]\tikztotarget.east) \tikztonodes
	}, swap]{Ud_0}
	\lar[to path={
		([yshift=-1ex]\tikztostart.west)--([yshift=-1ex]\tikztotarget.east) \tikztonodes
	}]{Ud_1}
\end{tikzcd} \right)\]
at a profile $\dch$.
Colimits in $\Mega$ are computed levelwise, so we're really after
\[ 	\colim \left( 
	\begin{tikzcd}
\left[ \bot\properadB \amalg_{\bot\properadG} \bot\properadW \right]\dc \arrow{r}[description]{s_0} & \left[\bot^2\properadB \amalg_{\bot^2\properadG} \bot^2\properadW \right]\dc
	\lar[to path={
		([yshift=1ex]\tikztostart.west)--([yshift=1ex]\tikztotarget.east) \tikztonodes
	}, swap]{d_0}
	\lar[to path={
		([yshift=-1ex]\tikztostart.west)--([yshift=-1ex]\tikztotarget.east) \tikztonodes
	}]{d_1}
\end{tikzcd} \right).\]
For notational convenience we will write
\begin{align*}
	\X &= U\properadB \amalg_{U\properadG} U\properadW, \\
	\Y &= U\bot \properadB \amalg_{U\bot \properadG} U\bot \properadW;
\end{align*}
since left adjoints commute with colimits we have
\begin{align*}
	\bot\properadB \amalg_{\bot\properadG} \bot\properadW &= F\left( U\properadB \amalg_{U\properadG} U\properadW \right) = F\X \\
	\bot^2\properadB \amalg_{\bot^2\properadG} \bot^2\properadW &= F\Y.
\end{align*}

Let $I\in \sgc\dc$ be a set of representatives of isomorphism classes as in Proposition \ref{free prop description}. 
For each $(G,\partition) \in \blood\dc$, there is an $H\in I$ which is isomorphic to $G$, and we have
\[
	D_{\dch}(G,\partition) \overset{\cong}{\to} D_{\dch} (H, \partition') \to \X[H] \subset F\X\dc.
\]
The composites
\[
	D_{\dch}(G,\partition) \to F \X \dc \to \left( \properadB \amalg_{\properadG} \properadW \right)\dc
\]
are compatible with maps in $\blood\dc$ and induce
\begin{equation} \label{first map}
	\colim_{\blood\dc} D_{\dch} \to \left( \properadB \amalg_{\properadG} \properadW \right)\dc.
\end{equation}

The diagram
\[ \begin{tikzcd}
\coprod_{G\in I} \colim\limits_{\transfusions_G} D_{\dch}(G,\partition) \rar{\cong} \dar &
\coprod_{G\in I} \X[G] \cong F\X \dc \\
\colim\limits_{\blood\dc} D_{\dch}(G,\partition)
\end{tikzcd} \]
shows that we have a map
\[
	p: F\X\dc \to \colim\limits_{\blood\dc} D_{\dch}(G,\partition);
\]
we need to show that this induces a map out of 
\[ 	\properadB \amalg_{\properadG} \properadW \dc = \colim \left( 
	\begin{tikzcd}
F \X \dc \arrow{r}[description]{s_0} & F \Y \dc
	\lar[to path={
		([yshift=1ex]\tikztostart.west)--([yshift=1ex]\tikztotarget.east) \tikztonodes
	}, swap]{d_0}
	\lar[to path={
		([yshift=-1ex]\tikztostart.west)--([yshift=-1ex]\tikztotarget.east) \tikztonodes
	}]{d_1}
\end{tikzcd} \right).\]
It is enough to show that the two composites
\begin{align}
F \Y \dc \overset{d_0}\longrightarrow F \X \dc &\overset{p}\to  \colim\limits_{\blood\dc} D_{\dch}(G,\partition) \label{d0composite}\\
F \Y \dc \overset{d_1}\longrightarrow F \X \dc &\overset{p}\to  \colim\limits_{\blood\dc} D_{\dch}(G,\partition)\label{d1composite}
\end{align}
are equal.

Write
\[
	D_{\dch}^\bot: \blood\dc \to \sset
\]
for the functor associated to the prop maps
\[
	\bot \properadB \overset{\bot m_b}{\longleftarrow} \bot \properadG \overset{\bot m_w}{\longrightarrow} \bot \properadW.
\]
Lemma~\ref{alternate description for free guy} applied to $\bot m_w$ and $\bot m_b$ tells us that the natural map
	\begin{equation}\label{colimsY}
		\colim\limits_{\transfusions_G} D_{\dch}^\bot \overset{\cong}{\to} \Y[G].
	\end{equation}
is an isomorphism.

Fix a graph $G\in I \subset \sgc\dc$, and let $\tilde y \in \Y[G]$.
Pick a representative $y\in D_{\dch}^\bot(G,\partition)$ for $\tilde y$ using \eqref{colimsY} with $\partition^{-1}(g) = \varnothing$.

For each $v\in \vertex(G)$, let $I_v \subset \sgc\profilev$ be a set of representatives for isomorphism classes.
We have
\begin{align*}
D_{\dch}^\bot(G,\partition) &= \prod_{\partition(v) = b} FU\properadB \profilev \times \prod_{\partition(v) = w} FU\properadW \profilev \\
&= \prod_{\partition(v) = b} \left(\coprod_{K_v \in I_v} \properadB [K_v]\right) \times \prod_{\partition(v) = w} \left(\coprod_{K_v \in I_v} \properadW [K_v]\right) \\
&= \coprod_{\prod\limits_v I_v} \left( \prod_{\partition(v) = b}  \properadB [K_v] \times \prod_{\partition(v) = w} \properadW [K_v] \right)
\end{align*}
so 
\[
	y\in \left( \prod_{\partition(v) = b}  \properadB [K_v] \times \prod_{\partition(v) = w} \properadW [K_v] \right)
\]
for some choice of graphs $K_v$, one for each vertex of $G$.
Let $\partition'$ be the partition of $G\{K_v\}$ so that each vertex of $K_v$ has the same color as $v$.

We have the following commutative diagram
\[ \begin{tikzcd}[column sep=0ex]
y \arrow[squiggly]{d}{\in} \arrow[squiggly]{r}{\in}& D_{\dch}^\bot(G,\partition) \dar{=} \\
\prod\limits_{\partition(v) = b}  \properadB [K_v] \times \prod\limits_{\partition(v) = w} \properadW [K_v]  \arrow[hook]{r} \arrow{dd}{=} & 
\coprod\limits_{\prod\limits_v I_v} \left( \prod\limits_{\partition(v) = b}  \properadB [K_v] \times \prod\limits_{\partition(v) = w} \properadW [K_v] \right) \dar{\coprod (\prod \gamma_\properadB \times \prod \gamma_\properadW)} \\
&   \prod\limits_{\partition(v) = b}  \properadB \profilev \times \prod\limits_{\partition(v) = w} \properadW \profilev \dar{=} \\
D(G\{K_v\}, \partition') \arrow{r} \arrow{dr} & D(G,\partition) \arrow{d} \\
&\colim\limits_{\blood\dc} D_{\dch}(G,\partition).
\end{tikzcd} \]
Composition along the left is \eqref{d0composite} while composition along the right is \eqref{d1composite}.
Hence $pd_0(\tilde y) = pd_1(\tilde y)$. Since $\tilde y$ was arbitrary, $pd_0 = pd_1$ and we have that $p$ induces 
\[
	\properadB \amalg_{\properadG} \properadW \dc \to \colim\limits_{\blood\dc} D_{\dch}(G,\partition)
\]
which is the inverse to \eqref{first map}.
\end{proof}

\section{Specialization to pushouts along \texorpdfstring{$\msi \to \msh$}{I --> H}}\label{section specialization}

At this point we wish to specialize to pushouts where $\properadG \to \properadW$ is one of the maps $\msi \to \msh$.
To do so, we will look at a subcategory of $\blood\dch$ called $\bloodsimple\dch$ whose objects are \emph{simplified} graphs; this subcategory has the property that $\colim_{\bloodsimple\dc} D = \colim_{\blood\dc} D$ (Theorem~\ref{equal_colims}).
First we will reduce to an intermediary class of graphs: the admissible graphs.

\begin{definition}\label{bloodIHdef} Fix an input-output profile $\dch$.
\begin{itemize}
	\item An object $(G,\partition) \in \blood\dch$ will be called \emph{admissible} if
	\begin{itemize}
		\item For each white vertex $v$, 
		\[ \profilev \in \left\{ \xxsingle, \yxsingle, \xysingle, \yysingle \right\},\]
		\item For each grey vertex $v$, $\profilev = \xxsingle$, and
		\item For each black vertex $v$, $y\notin \inp(v) \cup \out(v)$.
	\end{itemize}
	\item For a fixed input-output profile, $\dch$, let $\bloodadmissible\dch$ be the 
	full subcategory of $\blood\dch$ 
	consisting of those $(G,\partition)$ which are admissible.
	\item For two graphs $G,K$, a \emph{subgraph structure} $K \leq G$ is a graph $J$ together with a vertex $v_0 \in \vertex(J)$ so that $G = J(K)$, with $K$ substituted at the vertex $v_0$.
\end{itemize}
\end{definition}

The functor $F_0 : \sCat \to \sProp$ takes a simplicial category $\mathcal{C}$ to the simplicial prop with morphism spaces defined by 
\begin{equation*}
	F_0\mathcal{C}(a_1, \dots, a_n; b_1, \dots, b_m) = \begin{cases} \coprod\limits_{\theta} \prod\limits_{i=1}^n \mathcal{C}(a_{\theta^{-1}(i)}, b_i) & n =m \geq 0 \\
	\varnothing & \text{ otherwise} \end{cases}
\end{equation*}
where $\theta$ ranges over bijections $\theta: \{1, \dots, n\} \to \{1, \dots, n\}$ (see \cite[Proposition 11]{hackneyrobertson1}).
This implies that if $D(G,\partition) \neq \varnothing$, then for each white or grey vertex $v$ we must have $|\inp(v)| = |\out(v)| = n \geq 0$.
% Suppose that $v$ is a white or grey vertex with 
% \[
% 	|\inp(v)| = |\out(v)| = n \geq 0.
% \]
Given such a white or grey vertex $v$, 
we can `blow-up' the graph $G$ in $n!$ ways, with the vertex $v$ replaced by $n$ vertices\footnote{If $n=0$, this blow-up operation is just deletion of the isolated vertex $v$, while if $n=1$ the result is the same graph $G$.} (of the same color as $v$).
This gives $n!$ objects $(G_\theta, \partition_\theta)$, indexed on bijections $\theta: \{1, \dots, n\} \to \{1, \dots, n\}$ so that the gsd maps give an isomorphism
\[
	\coprod_{\theta} D(G_\theta, \partition_\theta) \overset\cong\to D(G,\partition).
\]
For example, if $n=2$ then the $2! = 2$ modifications of a vertex with two inputs and two outputs are as in figure~\ref{vertex expansion figure}; the rest of the graph is left untouched.

\begin{figure}
\includegraphics[width=0.6\textwidth]{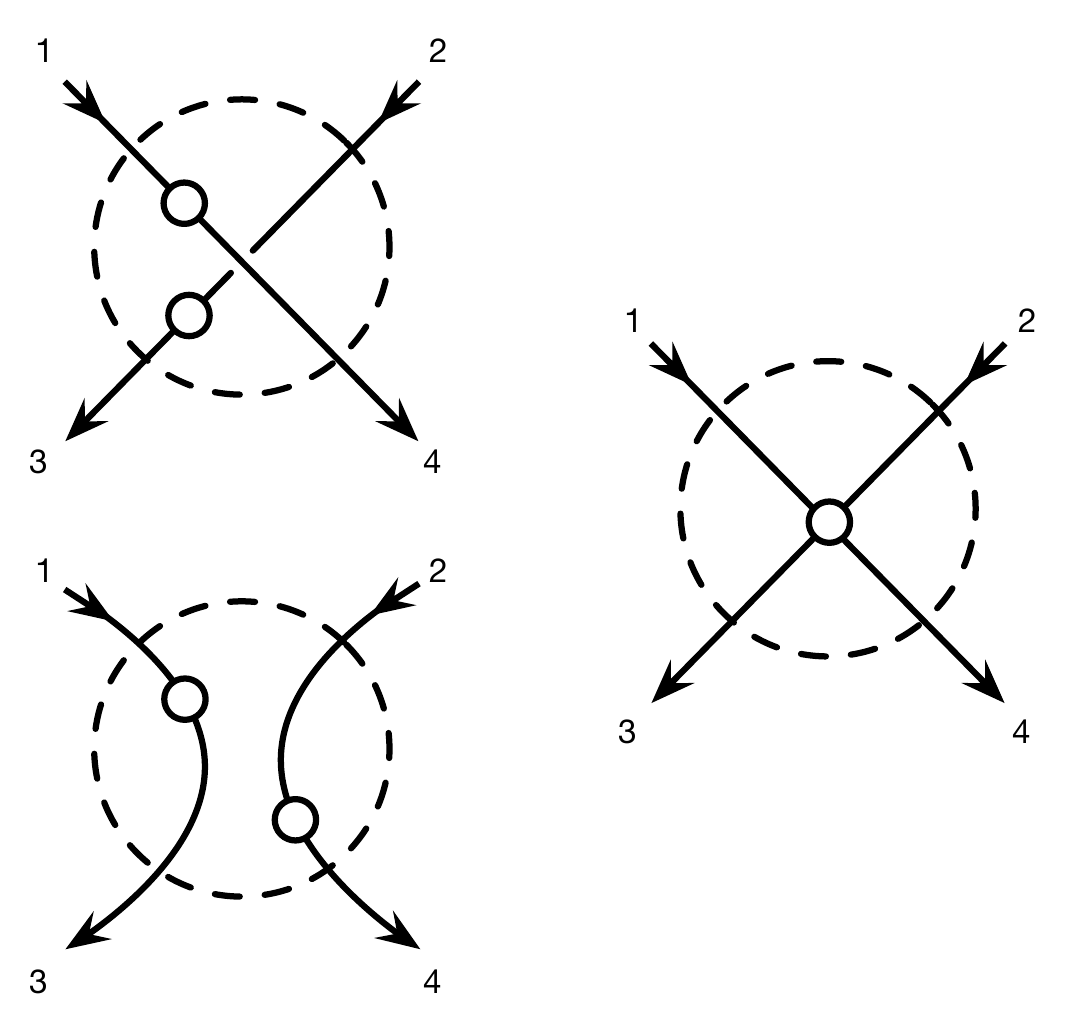}
\caption{Blow-up at a 2-2 corolla}\label{vertex expansion figure}
\end{figure}

The \emph{vertex expansion} of $(G,\partition)$ is obtained by iterated blow-ups of all white and grey vertices.
We have a collection of gsd maps with
\begin{itemize}
	\item $(G_\alpha, \partition_\alpha) \to (G,\partition)$ is a gsd map 
\[ G_\alpha = G\{ K_v \}_{\partition(v) \in \{ w,g\} } \]
	\item each $(G_\alpha, \partition_\alpha)$ is admissible in the sense of Definition~\ref{bloodIHdef}, 
	\item each $K_v$ is a (possibly empty) graph whose connected components are linear graphs with at least one vertex, and
	\item the map 
	\begin{equation}\label{e:vertex expansion}
	\coprod_{\alpha} D(G_\alpha, \partition_\alpha) \overset\cong\to D(G,\partition).
\end{equation}
is an isomorphism.
\end{itemize}
We call \eqref{e:vertex expansion} the vertex expansion; note that it is the identity if $(G,\partition)$ is already admissible.

\begin{definition} \emph{Atomic morphisms} of $\blood\dch$ are one of the following:
	\begin{itemize}
		\item An \emph{atomic transfusion} changes precisely one grey vertex to either a white or black vertex. We call these white or black atomic transfusions, respectively.
		\item An \emph{atomic gsd map} is a gsd map of the form
		\[
			(G(K),\partition') \to (G,\partition)
		\]
		given by doing a graph substitution at a single vertex $v\in \vertex(G)$. Depending on $\partition(v)$, we say this is a white, grey, or black atomic gsd map.
	\end{itemize}
\end{definition}

\begin{lemma}\label{lemma_vertex_expansion}
Suppose that $(G,\partition) \to (J,\partition)$ is a map in $\blood\dc$ with $D(G,\partition) \neq \varnothing$, and that
\begin{align*}
	\coprod_{\alpha} D(G_\alpha, \partition_\alpha) &\overset\cong\to D(G,\partition) &
	\coprod_{\beta} D(J_\beta, \partition'_\beta) &\overset\cong\to D(J,\partition')
\end{align*}
are the vertex expansions \eqref{e:vertex expansion}.
Then, for each $\alpha_0$ there is a morphism
$(G_{\alpha_0}, \partition_{\alpha_0}) \to (J_{\beta_0}, \partition'_{\beta_0})$ making the diagram
\[ \begin{tikzcd}
	D(G_{\alpha_0}, \partition_{\alpha_0}) \rar \dar & \coprod_{\alpha} D(G_\alpha, \partition_\alpha) \rar{\cong} & D(G,\partition) \dar \\
D(J_{\beta_0}, \partition'_{\beta_0}) \rar &
	\coprod_{\beta} D(J_\beta, \partition'_\beta) \rar{\cong} & D(J,\partition')
\end{tikzcd} \]
commute.
\end{lemma}
\begin{proof}
Write $G_{\alpha_0} = G\{H_v\}_{v\in \vertex(G)}$, with $H_v$ a corolla for every black vertex $v$ and $H_v$ has (1,1) corollas as its connected components for each grey or white vertex $v$.
It is enough to prove the lemma when $(G,\partition) \to (J,\partition)$ is an atomic morphism.
The most trivial case is that of a black atomic gsd map (which we omit), white and grey atomic gsd maps share the same argument, and black atomic transfusions are interesting.


\begin{figure}
	\begin{tikzpicture}
		\node(blownup){\usebox\threeblowups};
		\node[right=3cm of blownup](orig){\usebox\toblowup};
		\path[draw,->] (blownup) -- (orig);
		\node[below=of blownup](erasedbb){\usebox\erasedboundingbox};
		\node[below=of erasedbb](collapsed){\usebox\collapsedvertices};
		\node[right=3cm of collapsed](corol){\usebox\corollafives};
		\draw[double equal sign distance] (blownup) to node[left] {$K\{H_{v}\}$} (erasedbb);
		\path[draw,->] (erasedbb) -- (collapsed);
		\path[draw,->] (orig) -- (corol);
		\path[draw,->] (collapsed) -- (corol);
		\node[right=1mm of orig](k){$K$};
		\node[right=-9mm of corol](v){$v_0$};
	\end{tikzpicture}
	\caption{White or grey subgraph contraction}\label{subgraph_contr}
\end{figure}

We begin with the case when we have a grey or white atomic gsd map.
Suppose that $G=J(K)$, with $K$ substituted into a grey or white vertex $v_0$ of $J$.
Then the graph $K\{H_v\}_{v\in \vertex(K)}$ is a subgraph of $G_{\alpha_0}$ whose connected components are again linear graphs containing at least one vertex. 
\[ G_{\alpha_0} = G\{H_w\}_{w\in \vertex(G)} = J(K)\{H_v\}_{v\in \vertex(G)} = (J\{H_v\}_{v\in \vertex(G)\setminus \vertex(K)})(K\{H_v\}_{v\in \vertex(K)})\]
Contracting each component of $K\{H_v\}_{v\in \vertex(K)}$ to a (1,1)-corolla gives a blow-up of the vertex $v_0$ in $(J,\partition)$. See Figure~\ref{subgraph_contr}.

For the case of a white atomic transfusion, suppose that $v_0$ is transfused from grey to white. Then the transfusion $\partition_{\alpha_0} \to \partition'_{\alpha_0}$ which turns all vertices of $H_{v_0}$ into white vertices gives the commutative diagram
\[ \begin{tikzcd}
	(G_{\alpha_0}, \partition_{\alpha_0}) \rar \dar & (G,\partition) \dar \\
(G_{\alpha_0}, \partition_{\alpha_0}') \rar & (J,\partition'),
\end{tikzcd} \]
and $(G_{\alpha_0}, \partition_{\alpha_0}') \to (J,\partition')$ is a vertex expansion of $(J,\partition')$.

Finally, for case of a black atomic transfusion where we transfuse $v_0$ from grey to black, we first transfuse all vertices of $H_{v_0}$ to black vertices. We get something that is no longer a vertex expansion of $J$ since a black vertex has been blown up. So we follow this by a contraction of the subgraph $H_{v_0}$ to get a vertex expansion of $J$. See Figure~\ref{black_tr}.
\begin{figure}
\begin{tikzpicture}
\node(greydude){\usebox\greytangle};
\node[below=of greydude](blackdude){\usebox\blacktangle};
\node[below=of blackdude](cbv){\usebox\corollablackvariant};
\path[draw,->] (greydude) -- (blackdude);
\path[draw,->] (blackdude) -- (cbv);
\node[right=3cm of cbv](cb){\usebox\corollathreesblack};
\node[right=3cm of greydude](cg){\usebox\corollathreesgrey};
\path[draw,->] (greydude) -- (cg);
\path[draw,->] (cbv) -- (cb);
\path[draw,->] (cg) -- (cb);
\node[left=-1mm of greydude](hv){$H_{v_0}$};
\node[right=-5mm of cg](v){$v_0$};
\end{tikzpicture}
	\caption{Black transfusion}\label{black_tr}
\end{figure}


\end{proof}

\begin{proposition}\label{equal_colims1}
Let $D_{\dch}=D$ be the functor $\blood\dc \to \sset$ determined by the maps $ \msi \to  \msh$ and $ \msi \to \msp$. Then the map
	\[
		\colim_{\bloodadmissible\dc} D \to \colim_{\blood\dc} D
\] 
is an isomorphism.
\end{proposition}

\begin{proof}
We construct an inverse. For each $(G,\partition) \in \blood\dc$ with $D(G,\partition) \neq \varnothing$, we have the composite
	\[
		D(G,\partition) \overset\cong\leftarrow \coprod_{\alpha} D(G_\alpha, \partition_\alpha) \to \colim_{\bloodadmissible\dc} D
	\]
	where the map on the left is vertex expansion \eqref{e:vertex expansion}; we wish to show that this induces a map from $\colim_{\blood\dch} D$.
	If $(G,\partition) \to (J,\partition')$ is a morphism of $\blood\dch$, then for each $\alpha_0$ we have the commutative diagram
		\[ \begin{tikzcd}
	D(G,\partition) \rar & 
		D(J,\partition') \\
	\coprod_{\alpha} D(G_\alpha, \partition_\alpha) \uar{\cong} & 
		\coprod_{\beta} D(J_\beta, \partition'_\beta) \uar{\cong}[swap]{\eqref{e:vertex expansion}} \\
 	D(G_{\alpha_0}, \partition_{\alpha_0}) \uar \rar{\ref{lemma_vertex_expansion}} \dar & 
 		D(J_{\beta_0}, \partition'_{\beta_0})  \uar \dar \\
 	\colim\limits_{\bloodadmissible\dc} \decAB \rar{=} & 
 		\colim\limits_{\bloodadmissible\dc} \decAB
	\end{tikzcd} \]
	Summing over all $\alpha$ shows that we have a well-defined map 
	$\colim_{\blood \dch} D \to \colim_{\bloodadmissible \dch} D.$
	It is immediate that the composite
	\[
		\colim_{\bloodadmissible\dc} \decAB \to \colim_{\blood\dc} D \to \colim_{\bloodadmissible\dc} \decAB 
	\]
	is the identity.

	To show that 
	\[
		\colim_{\blood\dc} D \to \colim_{\bloodadmissible\dc} \decAB \to \colim_{\blood\dc} D
	\]
	is the identity, it is enough to show that the diagram
	\[ \begin{tikzcd}
	& & D(G,\partition) \arrow{dll}{a} \dar{a} \\
	\colim\limits_{\blood\dc} D\rar{g} & \colim\limits_{\bloodadmissible\dc} \decAB \rar{f} &\colim\limits_{\blood\dc}D
	\end{tikzcd} \]
	commutes for each object $(G,\partition) \in \blood\dch$.
	For an admissible $(G,\partition)$ we have
	\[ \begin{tikzcd}
	& & & D(G,\partition) \arrow{dll}{a} \dar{a} \arrow[dashed, bend right]{dlll}{a'} \\
	\colim\limits_{\bloodadmissible\dc} \decAB \rar{f} & 
	\colim\limits_{\blood\dc} D\rar{g} & 
	\colim\limits_{\bloodadmissible\dc} \decAB \rar{f} &
	\colim\limits_{\blood\dc} D
	\end{tikzcd} \]
	and we know that $gfa' = a'$, hence $fga = fgfa' = fa' = a$.
	But using the decomposition of $D(G,\partition)$ into the coproduct over $\coprod_{\alpha} D(G_\alpha, \partition_\alpha)$
	we have $fga=a$ for an arbitrary $(G,\partition)$.
\end{proof}

\subsection{Grey reduction of admissible graphs}

The following definition doesn't need the full notion of \emph{admissibility}, but it \emph{does} need that grey vertices have input-output profile $\xxsingle$.

\begin{definition}
	If $(G,\partition)\in\bloodadmissible\dc$, let $\eta(G,\partition) = (G^{\setminus \partition}, \partition^{\setminus g})$ be the object with
	\[
		G^{\setminus \partition} = G \{ \,\,|_x \,\}_{\partition(v) = g}
	\]
	and $\partition^{\setminus g} = \partition|_{\vertex(G) \setminus \partition^{-1}(g)}$.
	Let $gr(G,\partition)$ be the gsd map\[
		\eta(G,\partition) \to (G,\partition).
	\]
	We call $gr(G,\partition)$ or $\eta(G,\partition)$ the \emph{grey-reduction of} $(G,\partition)$.
\end{definition}

\begin{lemma}\label{grey reduction no change} If $D$ is the functor associated with 
\[  \msh \leftarrow  \msi \rightarrow \msp,\]
then
	\[
		D(gr(G,\partition)) : D(G^{\setminus \partition}, \partition^{\setminus g}) \to D(G,\partition)
	\]
	is an isomorphism.
\end{lemma}
\begin{proof}
	This is immediate since $F_0\msi \xxsingle = \msi(x,x) = *$.
\end{proof}

We wish to show that grey reduction is functorial.
In what follows, we will assume that \emph{all objects are admissible}.
Suppose that we have a transfusion
$q: (G,\partition') \to (G,\partition)$.
Then
\begin{equation}\label{e:gr transf}
\begin{aligned}
	G^{\setminus \partition'} &= G \{ \,\,|_x \,\}_{\partition'(v) = g} \\
	&= G \{ \,\,|_x \,\}_{\partition(v) = g} \{ \,\,|_x \,\}_{\substack{\partition'(v) = g \\ \partition(v) \neq g}} \\
	&= G^{\setminus \partition} \{ \,\,|_x \,\}_{\substack{\partition'(v) = g \\ \partition(v) \neq g}}
\end{aligned}\end{equation}
which determines a gsd map $\eta(q)$ which is the top map of the diagram
\[ gr(q) = \left( \begin{tikzcd} 
\eta(G,\partition') \dar \rar{\eta(q)} & \eta(G,\partition) \dar
\\
(G,\partition') \rar{q} & (G,\partition)
\end{tikzcd} \right). \]
Suppose that we have a gsd map
$f: (G,\partition) \to (H,\partition')$;
write \[ G = H\{K_v\}_{\partition'(v) \in \{b,w\}}\{ L_v \}_{\partition'(v) = g} \]
where each $L_v$ is connected and linear by admissibility.
Setting $G_0 = H\{ L_v \}_{\partition'(v) = g}$, we then have a factorization of $f$ as
\[
	(G,\partition) \to (G_0, \partition_0) \to (H,\partition')
\]
where $G = G_0 \{ K_v \}_{\partition'(v) \in \{b,w\}}$.
For the first map we have
\begin{equation}\label{e: G to G0}
\begin{aligned}
	G^{\setminus \partition} &= G\{ \,\,|_x \,\}_{\partition(v') = g} \\ 
	&= \left(G_0 \{ K_v \}_{\partition'(v) \in \{b,w\}}\right) \{ \,\,|_x \,\}_{\partition(v') = g} \\
	&= G_0 \{ K_v \}_{\partition_0(v) \in \{b,w\}} \{ \,\,|_x \,\}_{\partition_0(v) = g} \\
	&= G_0 \{ \,\,|_x \,\}_{\partition_0(v) = g} \{ K_v \}_{\partition_0(v) \in \{b,w\}} \\
	&= G_0^{\setminus \partition_0} \{ K_v \}_{\partition_0(v) \in \{b,w\}} 
\end{aligned}\end{equation}
hence get a gsd map
\[
	\eta(G,\partition) \to \eta(G_0, \partition_0).
\]
By admissibility, each $L_v$ is a connected, linear graph, and we have
\begin{equation}\label{e: gr G0 to H}
\begin{aligned}
	G_0^{\setminus \partition_0} &= G_0\{ \,\,|_x \,\}_{\partition_0(v') = g} \\ 
	&= \left( H \{ L_v \}_{\partition' (v) = g} \right) \{ \,\,|_x \,\}_{\partition_0(v') = g} \\
	&= H \{ L_v \{ \,\,|_x \,\}_{v' \in \vertex(L_v)} \}_{\partition' (v) = g} \\
	&= H \{ \,\,|_x \,\}_{\partition'(v) = g} = H^{\setminus \partition'}
\end{aligned}\end{equation}
and set
\[ gr(f) = \left( \begin{tikzcd}
\eta(G,\partition) \rar{gsd} \dar & \eta(G_0, \partition_0) \rar{=} & \eta(H,\partition') \dar \\
(G,\partition) \rar & (G_0, \partition_0) \rar & (H,\partition')
\end{tikzcd}\right). \]
It is now easy to check that 
\begin{align*}
	gr(f\circ f') &= gr(f) \circ gr(f') \\
	gr(q \circ q') &= gr(q) \circ gr(q')
\end{align*}
where $f,f'$ are gsd maps and $q,q'$ are transfusions.
We then \emph{define}
\[
	gr(f\circ q) = gr(f) \circ gr(q)
\]
whenever $f$ is a gsd map and $q$ is a transfusion.

\begin{proposition}\label{grey red functoriality} 
Grey reduction $gr: \bloodadmissible\dc \to Ar(\bloodadmissible\dc)$ is a functor.
\end{proposition}
\begin{proof}
	We need to show that
	\[
		gr(q\circ f) = gr(q) \circ gr(f)
	\]
	when $f$ is a gsd map and $q$ is a transfusion.
	Consider the situation
	\[ \begin{tikzcd}
		(G,\partition_2) \dar{f} \rar[color=red]{f^*q} &
	\color{red} (G,f^*\partition_3) \dar[color=red]{f} \\ 
	(J,\partition_2') \rar{q} & (J,\partition_3)
	\end{tikzcd} \]
	from \eqref{composition def} on page \pageref{composition def}.
	We must show that
	\[
		gr({\color{red} f}) \circ gr({\color{red} f^*q}) = gr(q) \circ gr(f).
	\]
	It is enough to check that the composites of gsd maps
	\begin{gather}
\label{g: G}		G^{\setminus \partition_2} \to G^{\setminus f^* \partition_3} \to J^{\setminus \partition_3}\\
\label{g: J}		G^{\setminus \partition_2} \to J^{\setminus \partition_2'} \to J^{\setminus \partition_3}
	\end{gather}
	are equal.

The map $f$ is given by a decomposition
\[
	G = J\{ K_v \}_{\partition_2'(v) \in \{ b,w \}}\{ M_v \}_{\substack{\partition_3(v) \in \{ b,w \}\\\partition_2'(v) =g}}\{ L_v \}_{\partition_3(v) =g}.
\]
By admissibility, 
each $M_v$ and $L_v$ is a connected linear graph with each edge colored by $x$.

Set $G_1 = J\{ L_v \}$ and $G_0 = J\{ M_v, L_v \} = G_1 \{ M_v \}$; then we have decompositions
\begin{align*}
f : (G&,\partition_2) \to (G_0, \partition_0) \to (J,\partition'_2) &
	{\color{red} f} : (G&,f^*\partition_3) \to (G_1, \partition_1) \to (J,\partition_3) \\ 
	G &= G_0\{ K_v \}_{\partition_2'(v) \in \{b,w\}}  & G &= G_1 \{ K_v, M_v \}_{\partition_3(v) \in \{b,w\}},
\end{align*}
which gives
\begin{align}
\label{a: gsd part2}
	G^{\setminus \partition_2} &\overset{\eqref{e: G to G0}}{=} G_0^{\setminus \partition_0} \{ K_v \}_{\partition_0(v) \in \{b,w\}} \overset{\eqref{e: gr G0 to H}}{=} J^{\setminus \partition_2'} \{ K_v \}_{\partition_2'(v) \in \{b,w\}}  \\
\label{a: gsd part3}
	G^{\setminus f^*\partition_3} 
	&\overset{\eqref{e: G to G0}}{=} G_1^{\setminus \partition_1} \{ K_v, M_v \}_{\partition_1(v) \in \{b,w\}}  
	\overset{\eqref{e: gr G0 to H}}{=} J^{\setminus \partition_3} \{ K_v, M_v \}_{\partition_3(v) \in \{b,w\}}.
\end{align}
Focusing now on the transfusions
$q: (J,\partition_2') \to (J,\partition_3)$ and $f^*q: (G,\partition_2) \to (G,f^*\partition_3)$, we have by \eqref{e:gr transf}
\begin{align}
\label{a: transf J}	J^{\setminus \partition_2'} 
	&= J^{\setminus \partition_3} \{ \,\,|_x \,\}_{\substack{\partition_2'(v) = g \\ \partition_3(v) \neq g}}\\
\label{a: transf G}	G^{\setminus \partition_2} 
	&= G^{\setminus f^*\partition_3} \{ \,\,|_x \,\}_{\substack{\partition_2(v) = g \\ f^*\partition_3(v) \neq g}}
\end{align}
Computing \eqref{g: G}, we have
\begin{align*}
	G^{\setminus \partition_2} &\overset{\eqref{a: transf G}}= G^{\setminus f^*\partition_3} \{ \,\,|_x \,\}_{\substack{\partition_2(v') = g \\ f^*\partition_3(v') \neq g}} \\
	&\overset{\eqref{a: gsd part3}}= \left( J^{\setminus \partition_3} \{ K_v, M_v \}_{\partition_3(v) \in \{b,w\}} \right) \{ \,\,|_x \,\}_{\substack{\partition_2(v') = g \\ f^*\partition_3(v') \neq g}} \\
	&= J^{\setminus \partition_3} \{ K_v \}_{\partition_2'(v) \in \{ b,w \}}\{ M_v \}_{\substack{\partition_3(v) \in \{ b,w \}\\\partition_2'(v) =g}}\{ \,\,|_x \,\}_{\substack{\partition_2(v') = g \\ f^*\partition_3(v') \neq g}} \\
	&= J^{\setminus \partition_3} \{ K_v \}_{\partition_2'(v) \in \{ b,w \}}\{ M_v \{ \,\,|_x \,\}_{\vertex(M_v)} \}_{\substack{\partition_3(v) \in \{ b,w \}\\\partition_2'(v) =g}} \\
	&= J^{\setminus \partition_3} \{ K_v \}_{\partition_2'(v) \in \{ b,w \}}\{ \,\,|_x \,\}_{\substack{\partition_3(v) \in \{ b,w \}\\\partition_2'(v) =g}}
\end{align*}
while computing \eqref{g: J} yields
\begin{align*}
	G^{\setminus \partition_2} &\overset{\eqref{a: gsd part2}}= 
	J^{\setminus \partition_2'} \{ K_v \}_{\partition_2'(v) \in \{b,w\}}  \\
	&\overset{\eqref{a: transf J}}= 
	J^{\setminus \partition_3} \{ \,\,|_x \,\}_{\substack{\partition_2'(v) = g \\ \partition_3(v) \neq g}} \{ K_v \}_{\partition_2'(v) \in \{b,w\}}
\end{align*}
so the two composites of gsd maps (\ref{g: G},\ref{g: J}) are equal.
\end{proof}

\subsection{Simplification of admissible graphs}
We now define the category of simplified graphs; a related category appears in section \ref{section filtration layers} where we order a subset of vertices.

\begin{definition} Fix an input-output profile $\dch$.
\begin{itemize} 
	\item Define $\varrho: \Ob \blood\dch \to \Ob \blood\dch$
	to be the function that turns all grey vertices to white vertices; notice that $\varrho$ takes admissible objects to admissible objects.
	\item An admissible object $(G,\partition)$ will be called \emph{simplified} if the only white, connected, non-edge subgraphs of $\varrho(G,\partition)$ are corollas (which, by admissibility, will be (1,1)-corollas). 
	\item For a fixed input-output profile $\dch$, let $\bloodsimple\dch$ be the 
	full subcategory of $\blood\dch$ 
	consisting of those $(G,\partition)$ which are both admissible and simplified.
\end{itemize}
\end{definition}

Suppose that an admissible object $(G,\partition) \in \bloodadmissible\dc$ is not simplified and is grey-free. 
Consider the set of subgraphs $L \leq G$ with
\begin{itemize}
	\item $L$ is connected and linear, 
	\item $\partition(L) = w$, and
	\item $|\vertex(L)| \geq 1$
\end{itemize}
and let $L_1, \dots, L_r$ be the set of subgraphs which are \emph{maximal} with respect to these properties and with $\coprod \vertex(L_i) = \partition^{-1}(w)$.
By maximality the $L_i$ do not intersect one another, 
and we can contract each $L_i$ to a single vertex, yielding a new graph $\overline G$.
Specifically, we have
$G = \overline G\{L_i\}_{v_i} \to \overline G$, where $v_1, \dots, v_r \subset \vertex(\overline G)$ is a set of $r$ vertices.
Define a partition on $\overline G$
\[
	\overline \partition(v) = \begin{cases}
		\partition(v) & v\in \vertex(\overline G) \setminus \{ v_1, \dots, v_r \} \subset \vertex(G)\\
		w & v \in \{ v_1, \dots, v_r \}.
	\end{cases}
\]
The gsd map $(G,\partition) \to (\overline G, \overline \partition)$ is called the \emph{simplification} of the grey-free, admissible object $(G,\partition)$.

\begin{notation}
	We will decorate our symbols for categories with $\notg$ when we only consider objects $(G,\partition)$ with $\partition(v) \in \{b,w\}$ for all $v$.
	For example, $\bloodsimple^{\notg}\dch$ is the full subcategory of $\bloodsimple\dch$ consisting of grey-free simplified graphs.
\end{notation}

\begin{lemma}\label{simplification lemma}
	If $(G,\partition) \in \bloodadmissible^{\notg}\dc$ is admissible and grey-free, then its simplification is a weakly initial object in the comma category $(G,\partition) \downarrow \bloodsimple\dc$.
\end{lemma}

\begin{proof}
	Fix a map
	\[
		(G,\partition) \overset{q}\to (G,\partition_0) \overset{f}\to (J,\partition_1)
	\]
	with $q$ a transfusion, $f$ a gsd map, and $(J,\partition_1)$ simplified.
	Since $(G,\partition)$ is grey-free, $q$ is the identity.
	Write $G = J\{K_v\}$ and $G=\overline{G} \{L_i \}_{v_i}$.
	Suppose that $v\in \vertex(J)$ with $\partition_1(v) = w$.
	Since $(J,\partition_1)$ is simplified and admissible, we know that 
	\[ |\inp(K_v)| =|\inp(v)| = 1 =  | \out(v) | = | \out(K_v) |,\] 
	and since $(G,\partition)$ is admissible we know that all white vertices $v'$ have $|\inp(v') | = |\out (v')| =1$, so $K_v$ is connected. 
	Then for each vertex $v'$ of $K_v$, $v'$ is contained in some $L_i$, but since $(J,\partition_1)$ has no adjacent white vertices, all vertices of $L_i$ are contained in $K_v$; by maximality of $L_i$ we have $L_i = K_v$.
	Notice that $K_v$ may just be an edge; write
	\[
		S = \{ \bar v_1, \dots, \bar v_r  \} \subset \vertex(J)
	\]
	for the set of vertices so that $K_{\bar v_i} \cong L_i$ has at least one vertex.
	Then we have
	\[
		\overline G \{ L_i \} = G = J\{ K_v \}_{\vertex(J)} = J\{ K_v \}_{v\notin S} \{ L_i \} 
	\]
	showing that $\overline G = J\{ K_v \}_{v\notin S}$, hence we have a factorization
	\[
		(G,\partition) \to (\overline G, \overline \partition) \to (J, \partition_1)
	\]
	of $f$.
\end{proof}

\begin{figure}
	\includegraphics[scale=0.3]{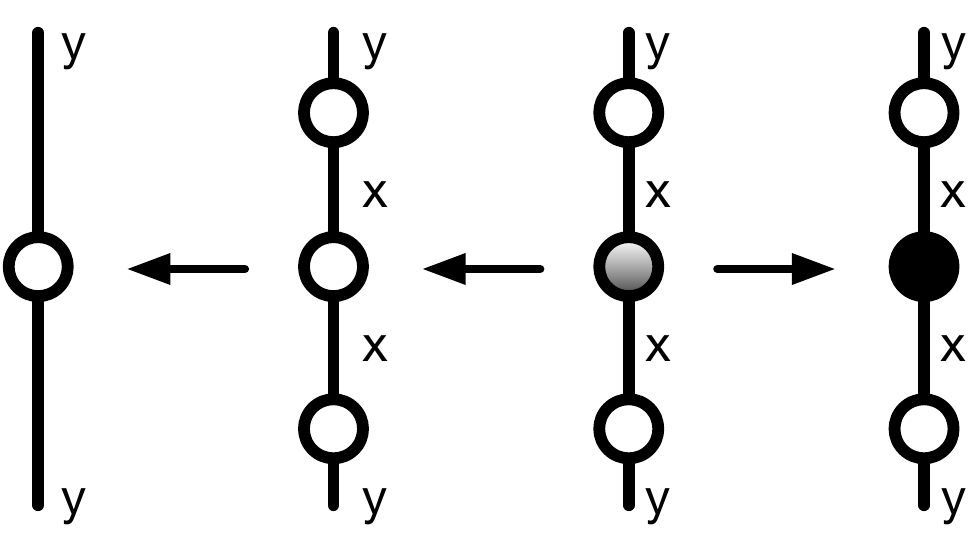}
	\caption{No maps between simplified graphs on the ends}\label{nowo}
\end{figure}

\begin{example}
	Note that if $(G,\partition)$ contains grey vertices adjacent to white vertices, then there may not be a weakly initial object in $(G,\partition) \downarrow \bloodsimple\dc$. For instance, there are no maps in $\blood\yysingle$ between the simplified graphs at the left and right ends of Figure~\ref{nowo}.
\end{example}

\begin{lemma}\label{grey free colimits}
If $D$ is the functor associated with $ \msh \leftarrow  \msi \rightarrow \msp$, 
then
	the inclusions $\bloodsimple^{\notg}\dch \to \bloodsimple\dch$ and $\bloodadmissible^{\notg}\dch \to \bloodadmissible\dch$ induce isomorphisms
	\begin{align*}
		\colim_{\bloodsimple^{\notg}\dch} D &\overset\cong\to \colim_{\bloodsimple\dch} D \\
		\colim_{\bloodadmissible^{\notg}\dch} D &\overset\cong\to \colim_{\bloodadmissible\dch} D.
	\end{align*}
\end{lemma}
\begin{proof}
We construct an inverse to 
\begin{equation}\label{internal equation grey free colimits}
\colim_{\bloodadmissible^{\notg}\dch} D \to \colim_{\bloodadmissible\dch} D,
\end{equation} 
the simplified case is similar. 
If $(G,\partition) \in \bloodadmissible\dch$, then by Lemma \ref{grey reduction no change} there is a map 
\[
	\varepsilon_{G,\partition} : D(G,\partition) \overset\cong\leftarrow D\left(\eta(G,\partition)\right) \to  \colim_{\bloodadmissible^{\notg}\dc} \decAB.
\]
If $f: (G,\partition) \to (J,\partition')$ is a morphism of $\bloodadmissible\dch$, then the diagram
\[ \begin{tikzcd}[column sep=small]
D(G,\partition) \arrow{rr}{Df} && D(J,\partition') \\
D\eta(G,\partition) \uar{\cong} \arrow{rr}{D\eta f} \arrow{dr} && D\eta(J,\partition') \uar{\cong} \arrow{dl}\\
&\colim\limits_{\bloodadmissible^{\notg}\dc} \decAB
\end{tikzcd} \]
commutes by Proposition \ref{grey red functoriality}. Hence $D(f) \varepsilon_{G,\partition} = \varepsilon_{J,\partition'}$ so $\varepsilon_{-}$ induces a map
\[\colim_{\bloodadmissible\dc} D \to \colim_{\bloodadmissible^{\notg}\dc} D.
\]
It is routine to check that this is both a left and right inverse to \eqref{internal equation grey free colimits}.
\end{proof}

\begin{proposition}\label{equal_colims}
Let $D_{\dch}=D$ be the functor $\blood\dc \to \sset$ determined by the maps $ \msi \to  \msh$ and $ \msi \to \msp$. Then
	\[
		\colim_{\bloodsimple\dc} \decAB = \colim_{\blood\dc} D
	\]
\end{proposition}
\begin{proof}
By Proposition~\ref{equal_colims1}, it is enough to show that 
\[
	\colim_{\bloodsimple\dc} \decAB \to \colim_{\bloodadmissible\dc} D
\]
is an isomorphism.
In the diagram
\[ \begin{tikzcd}
\bloodsimple^{\notg}\dc \rar \dar & \bloodsimple\dc \dar \\
\bloodadmissible^{\notg}\dc \rar & \bloodadmissible\dc 
\end{tikzcd} \]
the functor \[ \bloodsimple^{\notg}\dc \to \bloodadmissible^{\notg}\dc \] is cofinal by Lemma~\ref{simplification lemma}.
Applying Lemma \ref{grey free colimits} implies that the top and bottom maps in the diagram 
\[ \begin{tikzcd}
\colim\limits_{\bloodsimple^{\notg}\dc} D \rar{\cong} \dar{\cong}[swap]{\text{cofinality}} & \colim\limits_{\bloodsimple\dc} D \dar \\
\colim\limits_{\bloodadmissible^{\notg}\dc} D \rar{\cong} & \colim\limits_{\bloodadmissible\dc} D 
\end{tikzcd} \]
are isomorphisms, so the result follows.
\end{proof}

\section{A local filtration of the pushout}\label{section local filtration}
Consider the functions
	\begin{align*}
		\#_{wx}: \Ob \bloodsimple\dch &\to \mathbb{N} \\
		\#_{gwx} : \Ob \bloodsimple\dch &\to \mathbb{N}
	\end{align*}
with 
\begin{align*}
	\#_{wx} (G,\partition) &= \left| \left\{ v\in \vertex(G) \, 
	\mid 
	\, \partition(v) = w, \inp(v) = \out(v) = x \right\}\right| \\
	\#_{gwx} (G,\partition) &= \left| \left\{ v\in \vertex(G) \, 
	\mid 
	\, \partition(v) \in \{g,w \}, \inp(v) = \out(v) = x\right\}\right|.
\end{align*}
Let
$\bloodsimple_{\leq k}\dc$ be the full subcategory of $\bloodsimple\dc$ with objects $\#_{wx}^{-1}[0,k]$.
Note that all maps of $\bloodsimple\dc$ are non-decreasing with respect to $\#_{wx}$, so $\bloodsimple_{\leq k}\dc$ is a \emph{right ideal} of $\bloodsimple\dc$, meaning it is 
 is closed under precomposition with maps from $\bloodsimple\dc$.
Evidently $\bloodsimple\dc = \colim_k \bloodsimple_{\leq k}\dc.$

\begin{definition}\label{witnessing definition}
Suppose that $G : \calC \to \calD$ and $F: \calD \to \sSet$ are functors.
If $c\in \calC$ and $z \in FG(c)$, write $[z]_\calC \in \colim_\calC F$ for its image. 
Likewise, if $z\in F(d)$ write $[z]_\calD \in \colim_\calD F$.
Write 
\begin{align*}
	\colim_{\calC} FG &\overset{g}\to \colim_{\calD} F \\
	[z]_\calC &\mapsto [z]_\calD
\end{align*}
for the induced map on colimits.
To study injectivity of $g$, it is helpful to introduce a sequence of relations $\sim_n$.

Suppose that $g(x) = g(x')$. 
A \emph{witness} to $x\sim_n x'$ consists of 
a zigzag of morphisms
\begin{equation} \label{zigzag}
	G(c) \overset{f_1}\rightarrow d_1 \overset{f_2}\leftarrow d_2 \overset{f_3}\rightarrow \dots \overset{f_{2n-1}}\rightarrow d_{2n-1} \overset{f_{2n}}\leftarrow G(c')
\end{equation}
in $\calD$, 
representatives $z\in FG(c)$, $z'\in FG(c')$ with $[z]_\calC = x, [z']_\calC = x'$ (where $c,c' \in \calC$),
and elements 
$z_i \in F(d_i)$ so that
\begin{align*}
	F(f_1)(z) &= z_1 & F(f_{2n})(z') &= z_{2n-1} \\
	F(f_{2i})(z_{2i}) &= z_{2i-1} & F(f_{2i+1})(z_{2i}) &= z_{2i+1}.
\end{align*}
There is a corresponding function
\begin{align*}
\omega &:  \left(\colim_{\calC} FG\right) \times_g \left(\colim_{\calC} FG\right) \to  \mathbb{N} \\
		\omega(x,x') &= \min \{ n \mid x \sim_n x' \}.
\end{align*}

\end{definition}

If $x\sim_n x'$ then $x\sim_{n+1} x'$, which is witnessed by extending \eqref{zigzag} with $f_{2n+1} = f_{2n+2} = \id_{Gc'}$ and $z_{2n} = z_{2n+1} = z'$. 

\begin{remark}\label{omega identically zero}
	Suppose that $\calC$ is a full subcategory of $\calD$ 
	
	and $F: \calD \to \sSet$ a functor.
	Consider the relation $x\sim_n x'$ on $\colim_{\calC} F$. 
	The case $n=0$ corresponds to equality in $\colim_\calC F$, so $x=x'$ if and only if $x\sim_0 x'$.
	The map $g: \colim_{\calC} F \to \colim_{\calD} F$ is injective if and only if $\omega(-,-)$ is identically zero.
\end{remark}

\begin{notation}
	Let $\incident(G)$ be the set of vertices of $G$ which are incident to $y$-edges. Notice that if $(G,\partition)$ is simplified, then all such vertices are extremal.
	
\end{notation}

\begin{proposition}
If $D$ is the functor associated with $ \msh \leftarrow  \msi \rightarrow \msp$, then
	\[
		\colim_{\bloodsimple_{\leq \ell}\dc} D \to \colim_{\bloodsimple\dc} D
	\]
	is a monomorphism.
\end{proposition}
\begin{proof}
		
    Suppose that $x,x' \in \colim_{\bloodsimple_{\leq \ell}} D$ with $x$ and $x'$ mapping to the same element in $\colim_{\bloodsimple} D$ and
	\[
				x\neq x' \qquad\overset{\ref{omega identically zero}}{\Leftrightarrow}\qquad \omega(x, x') = n > 0.
	\]

	Choose a zigzag \eqref{zigzag} that witnesses $x\sim_n x'$, which begins as 
	\begin{equation}\label{ourzigzag}
		(H,\partition') \overset{f}\rightarrow (G,\partition) \overset{f'}\leftarrow (H',\partition'') \overset{f_3}\rightarrow d_3 \leftarrow \dots \overset{f_{2n}}\leftarrow c'
	\end{equation}
	with  $\#_{wx}(H,\partition') = k \leq \ell$. There is $z\in D(H,\partition')$, $z'\in D(H',\partition'')$ so that $[z]_\calC = x$ and $D(f)(z) = D(f')(z')$.
	By Lemma \ref{grey reduction no change} and the fact that $\bloodsimple_{\leq k} \dc$ is a right ideal, we may assume that $(H,\partition')$ is grey-free.

The morphism
	\[
		f: (H,\partition') \to (G,\partition)
	\]
	is in $\bloodsimple\dc$. 
	Since $(H,\partition')$ is grey-free, we have that $f$ is a gsd map. Further, since $(H,\partition')$ is simplified, we have
	\[
		H = G\{ K_v \}_{\partition^{-1}(b) \cup \incident(G)} \{ \,\, |_{x} \, \}_{S}
	\]
	for some set $S$ of white and grey vertices of $(G,\partition)$ with
	\[ \partition^{-1}(g) \subset S \subset \partition^{-1}(g) \cup \partition^{-1}(w) \setminus \incident(G)\] 
	and $\profilev = \xxsingle$ for every $v\in S$.
	Let $\partition_S$ be the partition of $G$ defined by
	\[
		\partition_S(v) = \begin{cases}
			\partition(v) & v \notin S \\
			g & v \in S;
		\end{cases}
	\]
	notice that we have
	\begin{align*}
		\partition_S^{-1}(w) &= \partition^{-1}(w) \setminus S \\
		\partition_S^{-1}(g) &= \partition^{-1}(g) \cup S  = S\\
		\partition_S^{-1}(b) &= \partition^{-1}(b).
	\end{align*}
	We have $
		G^{\setminus \partition_S} = G\{ \,\,|_x \,\}_{\partition_S(v) = g} = G\{ \,\,|_x \,\}_{S}$
	hence $
		H = G^{\setminus \partition_S} \{K_v \}_{\partition_S^{-1}(b) \cup \incident(G)}.$
	Then $f$ factors as
	\[
		(H,\partition') \to \eta(G,\partition_S) \to (G,\partition_S) \to (G,\partition);
	\]
	note that $\#_{wx} (H,\partition') =\#_{wx} \eta(G,\partition_S) = k$.
	The image of $D(f)$ is equal to $D\eta(G,\partition_S) = D(G,\partition_S) \subset D(G,\partition)$.

	We may also assume that $(H', \partition'')$ is grey-free and 
	$\#_{wx}(H', \partition'') = j \leq \ell$ (otherwise precompose with $gr(H',\partition'')$).
Factor the map
	\[ f' : (H', \partition'') \to (G,\partition) \]
	 as before for some $\partition^{-1}(g) \subset T\subset \partition^{-1}(g) \cup \partition^{-1}(w)$
	\[
		(H',\partition'') \to \eta(G,\partition_T) \to (G,\partition_T) \to (G,\partition)
	\]
	where the last map is grey-white transfusion on $T\cap \partition^{-1}(w)$, the middle map is a gsd map that creates grey vertices, and the first map preserves $\#_{wx}$. The image of $D(f')$ is $D\eta(G,\partition_T) = D(G,\partition_T) \subset D(G,\partition)$.
	Notice that $D(G,\partition_S) \cap D(G,\partition_T) = D(G,\partition_{S\cup T})$. We have a diagram
	\[ \begin{tikzcd}
	\eta (G,\partition_S) \rar & (G,\partition_S) \rar & (G,\partition) & (G,\partition_T) \lar & \eta(G,\partition_T) \lar \\
	&& (G,\partition_{S\cup T}) \arrow{ur}[swap]{tr} \arrow{ul}{tr} \uar \\
	&&\eta(G,\partition_{S\cup T}). \uar \arrow{uurr}[swap]{gsd} \arrow{uull}{gsd}
	\end{tikzcd} \]
	The outer diagonal maps either create white vertices or are identities.

Notice that $\#_{wx} \eta(G,\partition_{S\cup T}) \leq \min (k,j) \leq k \leq \ell$.
Write
\begin{align*}
	\eta(G,\partition_T) &= (G^{\setminus \partition_T},\partition_T^{\setminus g}) \\
	\eta(G,\partition_{S\cup T}) &= (G^{\setminus \partition_{S\cup T}}, \partition_{S\cup T}^{\setminus g}) & G^{\setminus \partition_{S\cup T}} &= G^{\setminus \partition_T}\{\,\,|_x\,\}_U \\
	&& U &= (S\cap \partition^{-1}(w)) \setminus T\\ &&&\subset (\partition_T^{\setminus g})^{-1}(w) = \partition_T^{-1}(w).
\end{align*}
Now $(H',\partition'') \to \eta(G,\partition_T)$ is a gsd map, and we write
\[
	H' = G^{\setminus \partition_T}\{K_v\}_{\left(\partition_T^{\setminus g}\right)^{-1}(b) \cup \incident\left(G^{\setminus \partition_T}\right)},
\]
where $\incident\left(G^{\setminus \partition_T}\right) \cong \incident(G)$.
Note that $U\cap \left(\left(\partition_T^{\setminus g}\right)^{-1}(b) \cup \incident\left(G^{\setminus \partition_T}\right)\right) = \varnothing$, so we can form a new graph $H''$ with $\vertex(H'') = \vertex(H') \setminus U$ by setting
\begin{align*}
	H'' &= G^{\setminus \partition_T}\{K_v\}_{\left(\partition_T^{\setminus g}\right)^{-1}(b) \cup \incident\left(G^{\setminus \partition_T}\right)}\{\,\,|_x\,\}_U \\
	&= H'\{\,\,|_x\,\}_U \\
	&= G^{\setminus \partition_{S\cup T}}\{K_v\}_{(\partition_T^{\setminus g})^{-1}(b) \cup \incident\left(G^{\setminus \partition_T}\right)}.
\end{align*}
We have the diagram of gsd maps
\[ \begin{tikzcd}
\eta(G,\partition_{ T}) & (H', \partition'')\lar
\\
\eta(G,\partition_{S\cup T})\uar & (H'', \partition''|_{H''})\uar{g}\lar
\end{tikzcd} \]
and the objects on the bottom in $\bloodsimple^{\notg}_{\leq k}\dch$.
Applying $D$, we have
\begin{center}\includegraphics{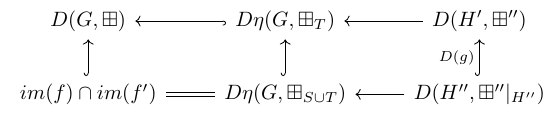}\end{center}
with the right square a pullback. 

We've replaced the chain
\[
	(H,\partition') \rightarrow (G,\partition) \leftarrow (H',\partition'')
\]
with the longer chain
\[
	(H, \partition') \rightarrow \eta(G,\partition_S) \leftarrow (H'',\partition''|_{H''}) \rightarrow (H',\partition'')
\]
with the three objects on the left in $\bloodsimple_{\leq k}$.
However, $z'\in D(H'', \partition''|_{H''})$, so we can modify the  zigzag \eqref{ourzigzag} by looking at
\begin{equation}
		(H'',\partition''|_{H''}) \overset{f_3 \circ g}\rightarrow d_3 \leftarrow \dots \overset{f_{2n}}\leftarrow c'
\end{equation}
which witnesses that $x=[z]_{\bloodsimple_{\leq \ell}} = [z']_{\bloodsimple_{\leq \ell}} \sim_{n-1} x'$, contrary to the assumption that $\omega(x,x') = n$.

It follows that $\omega(x,x') = 0$ for all pairs $x,x'$, so by remark \ref{omega identically zero} the map is injective.
\end{proof}

\begin{corollary}
If $D$ is the functor associated with $ \msh \leftarrow  \msi \rightarrow \msp$, then
	\[
		\colim_{\bloodsimple_{\leq k}\dc} D \to \colim_{\bloodsimple_{\leq k+1}\dc} D
	\]
	is a monomorphism. \qed
\end{corollary}

\begin{notation}
	Write
	\[
		\msq^k \dc = \colim_{\bloodsimple_{\leq k}\dc} D;
	\]
	then for each $\dc$ we have an exhaustive filtration
	\[
		\msq^0 \dc \hookrightarrow \msq^1 \dc \hookrightarrow \cdots \hookrightarrow \msq^k \dc \hookrightarrow \cdots \hookrightarrow \colim_n \msq^n \dc.
	\]
\end{notation}

\section{Exhibition of the filtration layers as a pushout}
\label{section filtration layers}

We will now consider graphs which come equipped with an ordering of a subset of $k$ vertices; by considering maps which preserve these vertices and their ordering we end up with a category $\marked_k\dc$.
An object of $\marked_k\dc$ is a pair $(G,\ell)$ with $G\in \sgc\dc$ and $\ell:  \{ 1,\dots, k \} \to \vertex(G)$ satisfying
\begin{itemize}
	\item all $y$-edges of $G$ are extremal
	\item the set of vertices incident to $y$-edges, denoted by $\incident(G)$, has the property that if $v\in \incident(G)$ then $\profilev \in \left\{ \yxsingle, \xysingle, \yysingle \right\}$,
	\item all vertices $v\in \image \ell$ have $\profilev = \xxsingle$
	\item no two vertices in $\image \ell \cup \incident(G)$ are adjacent.
\end{itemize}
For the morphism spaces, we take
\[
	\hom_{\marked_k\dc} ((G,\ell), (H,\ell')) \subset \hom_{\sgc\dc} (G,H)
\]
to be those decompositions $f$ of the form
\begin{align*}
	G &= H\{K_v \}_{v\notin \image \ell'} \\
	f_{\vertex} \circ \ell & = \ell'.
\end{align*}
There is a functor
\begin{align*}
	A : [1]^k \times \marked_k\dc &\to \bloodsimple_{\leq k} \dc \\
	(t_1, \dots, t_k,(G,\ell)) & \mapsto (G,\partition)
\end{align*}
where $\partition$ is defined by
\[
	\partition(v) = \begin{cases}
		g & v = \ell(i) \text{ and } t_i = 0 \\
		w & v = \ell(i) \text{ and } t_i = 1 \\
		w & v \in \incident(G) \\
		b & \text{otherwise.}
	\end{cases}
\]
The (multi)functor $A$ takes $(constant, 0 \to 1, constant)$ to an atomic white transfusion and $(constant, (G,\ell) \to (H,\ell'))$ to the gsd map given by $G\to H$.
Let $[1]^k_*$ denote the full subcategory of $[1]^k$ without the terminal object $1^k$.
Notice that the restriction of $A$ to $[1]^k_* \times \marked_k\dch$ lands in $\bloodsimple_{\leq k-1} \dch$ and that every object of $\#_{gwx}^{-1}(k)$ is in the image of $A$.

Define
\begin{align*}
	\precrush^\Ononempty_k\dc &= \colim_{[1]^k_* \times \marked_k\dc} DA \\
	\precrush_k\dc &= \colim_{[1]^k \times \marked_k\dc} DA. \\
\end{align*}
We can write the composite
\[
	[1]^k \times \marked_k\dc \overset{A}\to \bloodsimple_{\leq k} \dc \overset{D}{\to} \sSet
\]
as a product $DA \cong D_L \times D_R$, where
\begin{align*}
	D_L : [1]^k &\to \sSet & Z_0 &= \msi\xxsingle = *\\
	(t_1, \dots, t_k) &\mapsto \prod_{i=1}^k Z_{t_i} & Z_1 &= \msh\xxsingle \simeq *
\end{align*}
and
\begin{align*}
	D_R &: \marked_k\dc \to \sSet \\
	D_R(G,\ell) &= \left( \prod_{v \notin \incident(G) \cup \image \ell} \msp \profilev \right) \times \left( \prod_{v \in \incident(G)} \msh \profilev \right).
\end{align*}
Thus we have
\[
	\precrush^\Ononempty_k\dc = \colim_{[1]^k_* \times \marked_k\dc} DA = \colim_{[1]^k_* \times \marked_k\dc} (D_L \times D_R) = \colim_{[1]^k_*} D_L \times \colim_{\marked_k\dc} D_R
\]
and 
\[ \precrush_k\dc  = \colim_{[1]^k} D_L \times \colim_{\marked_k\dc} D_R. \]
Note that there is a free $\Sigma_k$-action on $[1]^k \times G_k^M\dch$,  which induces a free $\Sigma_k$-actions on
$\precrush_k\dch$ and $\precrush^\Ononempty_k\dch$.

\begin{lemma}[Pushout-Product Lemma]\label{iterated pushout product}
  Suppose that $F: [1] \to \sSet$ takes the generating morphism $0\to 1$ to a(n acyclic) cofibration.
  
  Then, for $n\geq 2$, the map
  \[
  	\colim_{[1]^n_*} F^{\times n} \to \colim_{[1]^n} F^{\times n} \cong F(1)^{\times n}
  \]
  is a(n acyclic) cofibration as well.
\end{lemma}
\begin{proof}
	A simple induction using the usual pushout-product axiom.
\end{proof}

\begin{lemma}\label{using pushout product lemma}
	The map
	\[
		\precrush^\Ononempty_k\dc \to \precrush_k\dc
	\]
	is an acyclic cofibration in $\sSet$.
	Furthermore, it induces an acyclic cofibration
	\[
		\precrush^\Ononempty_k\dc / \Sigma_k \to \precrush_k\dc / \Sigma_k.
	\]
\end{lemma}
\begin{proof}
	By assumption, $\msi\xxsingle \to \msh\xxsingle$ is an acyclic cofibration. The pushout-product lemma \ref{iterated pushout product} then gives that
	\[ \colim_{[1]^k_*} D_L \to \colim_{[1]^k} D_L \]
	is an acyclic cofibration as well.

	For the second statement, note that the map is $\Sigma_k$-equivariant and that the $\Sigma_k$-actions on $\precrush^\Ononempty_k\dch$ and $\precrush_k\dch$ are free.
\end{proof}

The diagram
\[ \begin{tikzcd}
		\precrush^\Ononempty_{k+1}\dc \dar \rar & \msq^k\dc \dar \\
		\precrush_{k+1}\dc \rar & \msq^{k+1}\dc.
\end{tikzcd} \]
factors as
\[ \begin{tikzcd}
		\precrush^\Ononempty_{k+1}\dc \dar \rar &  \precrush^\Ononempty_{k+1}\dc / \Sigma_{k+1} \dar \rar & \msq^k\dc \dar \\
		\precrush_{k+1}\dc \rar & \precrush_{k+1}\dc / \Sigma_{k+1} \rar & \msq^{k+1}\dc
\end{tikzcd} \]
by forgetting the ordering of the marked vertices.

\begin{proposition}\label{big pushout proposition}
The diagram
	\[ \begin{tikzcd}
		\precrush^\Ononempty_{k+1}\dc / \Sigma_{k+1} \dar \rar & \msq^k\dc \dar \\
		\precrush_{k+1}\dc / \Sigma_{k+1} \rar & \msq^{k+1}\dc.
\end{tikzcd} \]
is a pushout.
\end{proposition}
\begin{proof}
	Consider the map
	\begin{equation}\label{map to show is iso}
		C: \precrush_{k+1}\dc \amalg_{\precrush^\Ononempty_{k+1}\dc} \msq^k\dc \to \msq^{k+1}\dc.
	\end{equation}
	For each element of $\msq^{k+1}\dc$ we can find a representative $z \in D(G,\partition)$ for some $(G,\partition) \in \bloodsimple^{\notg}_{\leq k+1}$. If $\#_{wx}(G,\partition) \leq k$, then $[z]$ is in the image of $\msq^k\dc$, while if $\#_{wx}(G,\partition) = k+1$ then $(G,\partition) = A(1,\dots, 1, (G,\ell))$ for a choice of ordering $\ell$ of the white vertices of $(G,\partition)$, so $[z]$ is in the image of $\precrush_{k+1}\dc$. Thus \eqref{map to show is iso} is surjective.

	It is not true that \eqref{map to show is iso} is injective,
	but we will show that 
   \begin{equation*}
		C_{\Sigma_{k+1}}: \precrush_{k+1}\dc / \Sigma_{k+1} \uamalg{\precrush^\Ononempty_{k+1}\dc / \Sigma_{k+1}} \msq^k\dc \to \msq^{k+1}\dc.
	\end{equation*}
	is injective.
	Suppose that we have
	\[
		x,x' \in \precrush_{k+1}\dc \setminus \left( \image \precrush^\Ononempty_{k+1}\dc \right)
	\]
	with $C(x) = C(x')$. If we can show that this implies $x=x' \mod \Sigma_k$, then $C_{\Sigma_{k+1}}$ is injective.

Suppose that $C(x) = C(x')$ and, using the notation of definition \ref{witnessing definition}, that $x\sim_n x'$ for some $n > 0$.
Choose a witness to $x\sim_n x'$, i.e.\ elements
	\begin{align*}
		z & \in DA(c) & z' & \in DA(c') \\
		[z] &= x \in \precrush_{k+1}\dc & [z'] &= x' \in \precrush_{k+1}\dc,
	\end{align*} 
	a zigzag
\begin{equation}\label{new zigzag}
	A(c) \overset{f_1}\rightarrow d_1 \overset{f_2}\leftarrow d_2 \overset{f_3}\rightarrow \dots \overset{f_{2n-1}}\rightarrow d_{2n-1} \overset{f_{2n}}\leftarrow A(c')
\end{equation}
in $\bloodsimple_{\leq k+1}\dch$, and 
elements 
$z_i \in D(d_i)$ so that
\begin{align*}
	D(f_1)(z) &= z_1 & D(f_{2n})(z') &= z_{2n-1} \\
	D(f_{2i})(z_{2i}) &= z_{2i-1} & D(f_{2i+1})(z_{2i}) &= z_{2i+1}.
\end{align*}
Note that remark \ref{omega identically zero} does not apply in the present setting; nevertheless, we will show that $\omega(x,x') \leq 1$.

Our goal is to show that if $n>1$, $x\sim_n x'$ implies $x\sim_{n-1} x'$. 
Write the beginning of \eqref{new zigzag} as
\[
	A(1,\dots, 1, (H,\ell')) = (H, \partition') \overset{f}\to (G,\partition) \overset{f'}\leftarrow (H', \partition'');
\]
note that $(H,\partition')$ is grey-free, otherwise our $x$ is in the image of $\precrush^\Ononempty_{k+1}\dc$.
Hence $\#_{wx}(H,\partition') = k+1$ and $f$ is a gsd map.
Notice that $f$ cannot increase $\#_{wx}$ (otherwise we leave $\bloodsimple_{\leq k+1}$), so $f$ is given by
\[
	H = G\{K_v\}_{\partition^{-1}(b) \cup \incident(G)}\{\,\,|_x\,\}_{\partition(v) = g}.
\]
We then have a diagram
\[ \begin{tikzcd}
& \eta(G,\partition) \dar & \eta(H', \partition'') \dar \lar[swap]{\eta(f')} \\
(H,\partition') \rar{f} \arrow[dashed]{ur}{\tilde f}& (G,\partition) & (H', \partition'')  \lar{f'}
\end{tikzcd} \]
where the gsd map $\tilde f$ is given by
$H = G^{\setminus \partition} \{K_v \}_{\partition^{-1}(b) \cup \incident(G)}$.
Notice that $\tilde f$ is the image under $A$ of a map
\[ \id \times \tilde f : (1,\dots, 1, (H,\ell')) \to (1, \dots, 1, (G^{\setminus \partition}, \ell)), \]
where $\ell = \tilde f_{\vertex} \circ \ell'$.

Factor $f'$ as a transfusion $q$ followed by a gsd map $h$
\begin{equation}\label{internal eq factorization}
	f' : (H',\partition'') \overset{q}\to (H', \partition_0) \overset{h}\to (G,\partition).
\end{equation}
Write $h$ to be the decomposition
\[
	H' = G\{ L_v \}_{\partition(v) \in \{ b,w \}} \{ R_v \}_{\partition(v) = g};
\]
then by (\ref{e: G to G0},\ref{e: gr G0 to H}) and the fact that everything is simplified, $\eta(h)$ is the 
decomposition
\begin{equation}\label{e: eta g decomp}
	H'^{\setminus \partition_0} = G^{\setminus \partition}\{ L_v \}_{\partition(v) \in \{ b,w \}}.
\end{equation}
Let $S$ be the set of vertices $v$ with 
$\partition({v}) = w$, ${v}\notin \incident(G)$, and $L_{v} = |_x$.
We will argue that $\eta(h)$ is the image under $A$ of a morphism of the form
\begin{equation}\label{e: preimage eta g}
	(1,\dots, 1, (H'^{\setminus \partition_0},\ell_0)) \to (1, \dots, 1, (G^{\setminus \partition}, \ell)).
\end{equation}
Define a new partition $\partition_S$ on $G^{\setminus \partition}$ by 
\[
	\partition_S(v) = \begin{cases}
		g & v \in S; \\
		\partition^{\setminus g}(v) & \text{otherwise.}
	\end{cases}
\]
But now
\begin{align*}
	H'^{\setminus \partition_0} &= G^{\setminus \partition}\{ L_v \}_{\partition(v) \in \{ b,w \}} \\ 
	&= (G^{\setminus \partition})^{\setminus \partition_S} \{ L_v \}_{\partition^{-1}\{ b,w \} \setminus S}.
\end{align*}
giving the first map in the factorization
\begin{equation}\label{decomposition of eta g}
	\eta(H', \partition_0) \to \eta(G^{\setminus \partition}, \partition_S) \to (G^{\setminus \partition}, \partition_S) \to (G^{\setminus \partition}, \partition^{\setminus g}) = \eta(G,\partition).
\end{equation}
The map
\[
	(G^{\setminus \partition}, \partition_S) \to (G^{\setminus \partition}, \partition^{\setminus g})
\]
is the image under $A$ of a map
\begin{equation}\label{map right now}
	(t_1, \dots, t_{k+1}, (G^{\setminus \partition}, \ell)) \to (1,\dots, 1, (G^{\setminus \partition}, \ell)) 
	\end{equation}
	\[
	t_i = \begin{cases}
		0 & \ell(i) \in S \\
		1 & \text{otherwise.}
	\end{cases}
\]
If this is \emph{not} the identity then $x$ is represented by an element of $DA(t_1, \dots, t_{k+1}, (G^{\setminus \partition}, \ell))$, hence is in the image of $\precrush_{k+1}^+\dch$, contrary to assumption. Thus \eqref{map right now} is the identity,  so $t_i = 1$ for all $i$ and $S=\varnothing$.
Now, in \eqref{decomposition of eta g}, we have
\[ \begin{tikzcd}
	\eta(H', \partition_0) \rar & 
	\eta(G^{\setminus \partition}, \partition_S) \rar \arrow{dr}{=} & 
	(G^{\setminus \partition}, \partition_S) \rar{=} & 
	(G^{\setminus \partition}, \partition^{\setminus g}) \dar{=}\\
	&
	&
	\eta(G^{\setminus \partition}, \partition_S) \uar \rar{=}& 
	\eta(G,\partition)
\end{tikzcd} \]
	which shows that the map $\eta(h): \eta(H', \partition_0)  \to \eta(G,\partition)$ from \eqref{e: eta g decomp}
	is of the form
	\[
		H'^{\setminus \partition_0} = G^{\setminus \partition}\{ L_v \}_{\partition^{-1}(b) \cup \incident(G)},
	\] hence we have that $\eta(h)$ is the image under $A$ of a morphism of $\id \times \eta(h)$ as in \eqref{e: preimage eta g}.

	Next consider $q: (H',\partition'') \to (H', \partition_0)$ from \eqref{internal eq factorization}; by \eqref{e:gr transf} we have that $\eta(q)$ is the decomposition
\[ 
	H'^{\setminus \partition''} = H'^{\setminus \partition_0} \{ \,\,|_x \,\}_{\substack{\partition''(v) = g \\ \partition_0(v) \neq g}}.
\]
	So far we know that $\eta(H',\partition_0) = A(1,\dots, 1, (H'^{\setminus \partition_0},\ell_0))$
	We play the same kind of game as above and let 
\[
	S = \left\{ v \in \vertex(H'^{\setminus \partition_0} ) \,\,\middle|\,\, \partition''(v) = g  \,\, \& \,\, \partition_0(v) \neq g \right\}
\]
so that 
$H'^{\setminus \partition''} = H'^{\setminus \partition_0} \{ \,\,|_x \,\}_{S}$.
	Define a partition on $H'^{\setminus \partition_0}$ by 
	\[
		\partition_S(v) = \begin{cases}
			g & v \in S \\
			\partition_0^{\setminus g}(v) & \text{otherwise,}
		\end{cases}
	\]
so we have a factorization
\begin{equation}\label{e: decomp of eta q} \eta(H',\partition'') \to (H'^{\setminus \partition_0},\partition_S) \to \eta(H',\partition_0)
\end{equation}
where the second map is a transfusion and the first map creates grey vertices indexed by $S$.
The transfusion is the image under $A$ of
\[
	(t_1, \dots, t_{k+1}, (H'^{\setminus \partition_0},\ell_0)) \to (1,\dots, 1, (H'^{\setminus \partition_0},\ell_0))
\]
where $t_i = 0$ if $\ell_0(i) \in S$ and $t_i = 1$ if $\ell_0(i) \notin S$. But if any $t_i = 0$ then $x$ is in the image of $\precrush^+_{k+1} \dch$, hence $S = \varnothing$ and both maps in \eqref{e: decomp of eta q} are identities.

Let us return now to our witness of $x\sim_n x'$:
\[
	A(1,\dots,1,(H,\ell')) = (H, \partition') \overset{f}\to (G,\partition) \overset{f'}\leftarrow (H', \partition'') \overset{f_3}\rightarrow \dots \overset{f_{2n-1}}\rightarrow d_{2n-1} \overset{f_{2n}}\leftarrow A(c')
\]

% \[ \begin{tikzcd}[column sep=small]
% (H, \partition') \rar{f} \arrow{dr} & (G,\partition)&   \lar[swap]{f'}  (H', \partition'') \rar{f_3} & d_3  & \lar[dash,dotted]  \overset{f_{2n-1}}\rightarrow d_{2n-1} \overset{f_{2n}}\leftarrow A(c') \\
% &\eta(G,\partition) \uar & \eta(H',\partition'') \lar \uar \\
% A(1^{k+1},(H,\ell')) \arrow{uu}{=} \rar & A(1^{k+1}, (G^{\setminus \partition}, \ell))\uar{=} & A(1^{k+1}, (H'^{\setminus \partition_0},\ell_0))\uar{=} \lar \arrow[bend right]{uur}[swap]{f_3'}
% \end{tikzcd} \]
\begin{center}\includegraphics{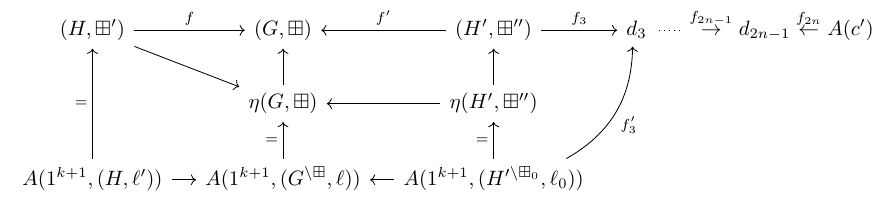}\end{center}
Since $z_2$ actually lives in $D\eta(H',\partition'')$ by Lemma \ref{grey reduction no change}, we get a witness
\[
	A(1, \dots, 1, (H'^{\setminus \partition_0},\ell_0)) \overset{f_3'}\rightarrow d_3 \overset{f_4}\leftarrow \dots \overset{f_{2n-1}}\rightarrow d_{2n-1} \overset{f_{2n}}\leftarrow A(c')
\]
to $x = [z_2] \sim_{n-1} x'$.

All of the above constructions work when $n=1$, except now the witness to $x\sim_1 x'$ looks like
\[
	A(1,\dots,1,(H,\ell')) = (H, \partition') \overset{f}\to (G,\partition) \overset{f'}\leftarrow (H', \partition'') = A(1,\dots,1,(H',\ell''))
\]
with $(H',\partition'')$ grey-free. The diagram above reduces to
\[ \begin{tikzcd}[column sep=small] 
&& A(1^{k+1},(H',\ell'')) \dar{=} \\
(H, \partition') \rar{f} & (G,\partition)&   \lar[swap]{f'}  (H', \partition'')  &  \\
A(1^{k+1},(H,\ell')) \uar{=} \rar & A(1^{k+1}, (G^{\setminus \partition}, \ell)) \uar & A(1^{k+1}, (H',\ell_0)).\uar{=} \lar 
\end{tikzcd} \]
But of course we don't have control over $\ell_0$, which may be different from $\ell''$, so the
composite $A(1,\dots,1, (H',\ell_0)) \to A(1,\dots,1,(H',\ell''))$ does not necessarily come from a map $(1,\dots,1, (H',\ell_0)) \to (1,\dots,1,(H',\ell''))$ in $[1]^k \times \marked_{k+1}\dch$. But $\image(\ell'')= \partition''^{-1}(w) = \image(\ell_0)$,
so $\tilde z_2 \in DA(1^{k+1}, (H',\ell_0))$
and $z_2 \in DA(1^{k+1}, (H',\ell''))$ map to the same element in 
$\precrush_{k+1}\dc / \Sigma_{k+1}$. Thus $x=[z]=[\tilde z_2]$ and $x'=[z_2] = \sigma [\tilde z_2]$ give the same element of $\precrush_{k+1}\dc / \Sigma_{k+1}$, so $C_{\Sigma_{k+1}}$ is injective.
\end{proof}

\begin{proof}[{Proof of Proposition \ref{NAFA}}]

	Fix an input-output profile $\dch$. Then we have isomorphisms
\[
	\colim_k \msq^k\dc = \colim_k \colim_{\bloodsimple_{\leq k}\dc} D \cong \colim_{\bloodsimple\dc} D 
	\overset{\ref{equal_colims}}\cong \colim_{\blood\dc} D \overset{\ref{pushout vs colim}}\cong \msh \amalg_{\msi} \mst \dch.
\]
	By Lemma~\ref{using pushout product lemma} and Proposition~\ref{big pushout proposition}, each
	\[
		\msq^k\dch \to \msq^{k+1}\dch
	\]
	is an acyclic cofibration of simplicial sets.
	Hence
	\[
		\msq^0 \dch \to \colim_k \msq^k \dch \cong \msh \amalg_{\msi} \mst \dch
	\]
	is an acyclic cofibration as well.
	If $\dch$ is a $\col(\mst)$ profile, then $\sG_{\col(\mst)} = \bloodsimple_{\leq 0} \dch$, so $\mst\dch = \msq^0 \dch$ by Proposition \ref{colim sgc}.
	Hence
	\[
		\mst\dch \to \msh \amalg_{\msi} \mst \dch
	\]
	is a weak equivalence for all input-output profiles $\dch$ in $\col(\mst)$, so $\mst \to \msh \amalg_{\msi} \mst$ satisfies (W1).
\end{proof}

\bibliographystyle{amsplain}
\bibliography{infinity}

\end{document}